\newtheorem{theorem}{Theorem}[section]
\newtheorem{lemma}[theorem]{Lemma}
\theoremstyle{definition}
\newtheorem{definition}[theorem]{Definition}
\theoremstyle{remark}
\newtheorem{remark}[theorem]{Remark}
\numberwithin{equation}{section}
\newcommand{\Abs}[1]{\left\lvert#1\right\rvert}
\newcommand{\norm}[1]{\lVert#1\rVert}
\newcommand{\Norm}[1]{\left\lVert#1\right\rVert}
\newcommand{\ag}[1]{\langle#1\rangle}
\newcommand{\cL}{\mathcal{L}}
\newcommand{\R}{\mathbb{R}}
\newcommand{\e}{\varepsilon}
\newcommand{\loc}{\text{loc}}
\begin{document}

\title{Uniform boundary regularity in almost-periodic homogenization}


\author{Jinping Zhuge}
\address{}
\curraddr{}
\email{}
\thanks{The author is supported in part by NSF grant DMS-1161154. The author thanks Professor Zhongwei Shen for valuable discussions and constant encouragement.}

\subjclass[2010]{35B27, 35J57, 35B65.}

\date{}

\begin{abstract}
	In the present paper, we generalize the theory of quantitative homogenization for second-order elliptic systems with rapidly oscillating coefficients in $APW^2(\R^d)$, which is the space of almost-periodic functions in the sense of H. Weyl. We obtain the large scale uniform boundary Lipschitz estimate, for both Dirichlet and Neumann problems in $C^{1,\alpha}$ domains. We also obtain large scale uniform boundary H\"{o}lder estimates in $C^{1,\alpha}$ domains and $L^2$ Rellich estimates in Lipschitz domains.
\end{abstract}
\keywords{Homogenization, elliptic system, boundary regularity}

\maketitle
\tableofcontents
\section{Introduction}
This paper is a continuation of our previous work \cite{ShenZhuge2} and generalizes the global uniform Lipschitz estimate in periodic or uniformly almost-periodic homogenization to the second-order elliptic operators with coefficients in a broader class of discontinuous almost-periodic functions. Precisely we will study a family of elliptic operators with rapidly oscillating almost-periodic coefficients in the form of
\begin{equation}\label{def_Le}
\cL_\e = - \text{div}(A(\cdot/\e) \nabla ) = - \frac{\partial}{\partial x_i} \left\{ a^{\alpha\beta}_{ij} \bigg( \frac{\cdot}{\e}\bigg) \frac{\partial}{\partial x_j}\right\},  \qquad \e > 0
\end{equation}
where summation convention is used throughout and $\e$ is assumed to be a tiny parameter. We will assume that the coefficient matrix $A(y) = (a^{\alpha\beta}_{ij}(y))$ with $1\le i,j \le d$ and $1\le  \alpha, \beta \le m$ is real, bounded, measurable, and satisfies the following conditions:

(i) Strong ellipticity: for some $\mu>0$, and all $ y\in\R^d $ and $ \xi = (\xi_i^\alpha) \in \R^{d\times m}$,
\begin{equation}\label{def_ellipticity}
\mu|\xi|^2 \le a^{\alpha\beta}_{ij}(y) \xi_i^\alpha \xi_j^\beta \le \mu^{-1}|\xi|^2.
\end{equation}

(ii) Almost-periodicity in the sense of H. Weyl (1927): each entry of $A$ may be approximated by a sequence of trigonometric polynomials with respect to the semi-norm
\begin{equation}\label{def_W2}
\norm{f}_{W^2} = \limsup_{R\to \infty} \sup_{x\in\R^d} \left( \fint_{B(x,R)} |f|^2 \right)^{1/2}.
\end{equation}
In this situation, we also say $A\in APW^2(\R^d)$. We emphasize that this class of almost-periodic functions, which allows discontinuous functions, is much broader than that of uniformly almost-periodic functions in the sense of H. Bohr (1925) considered in \cite{Shen2, AS, AGK}, which is the closure of trigonometric polynomials with respect to the $L^\infty$ norm\cite{CC}.

We consider the following Dirichlet problem(DP) in a bounded domain $\Omega$:
\begin{equation}\label{def_DP}
\cL_\e(u_\e) +\lambda u_\e= F \quad \text{in } \Omega, \qquad \text{and} \qquad u_\e =f \quad \text{on } \partial\Omega,
\end{equation}
where $\lambda\ge 0$ is a parameter. The main goal of this paper is to establish the large scale uniform boundary Lipschitz estimates for the weak solution of (\ref{def_DP}). Here the rigorous meaning to the notion of large scale uniform boundary Lipschitz estimate is given as follows: for any $x_0\in \partial\Omega$ and any $r\ge \e$, there exists a constant $C$ independent of $\e$ or $r$, such that
\begin{equation}\label{ineq_Lip_Intro}
\left( \fint_{B(x_0,r)\cap \Omega} |\nabla u_\e|^2 \right)^{1/2} \le C.
\end{equation}

It is well-known that elliptic equations or systems (\ref{def_DP}) with discontinuous coefficients may have unbounded $\nabla u_\e$. But (\ref{ineq_Lip_Intro}) claims that $\nabla u_\e$ may be bounded in terms of average integral at a relatively large scale $r\ge \e$, uniformly with respect to $\e$, if the coefficients possess a certain repeated self-similar structure. This phenomenon also occurs in periodic homogenization and random homogenization in the stationary and ergodic setting; see \cite{Shen1, AM, AKM, ASm}. In general, (\ref{ineq_Lip_Intro}) is optimal in the sense that it does not hold uniformly for $r\ll\e$. However, as long as the assumption of local smoothness on the coefficients $A$ is imposed, a blow-up argument will send $r\to 0$ in (\ref{ineq_Lip_Intro}) and give us the usual full uniform Lipschitz estimate, i.e., $\norm{\nabla u_\e}_{L^\infty}$ is uniformly bounded; see Remark \ref{rmk_Lip}. This idea of separating large scale estimates ($r\ge \e$) only related to the homogenization process and small scale estimates ($r< \e$) only related to smoothness of coefficients has been clearly clarified in \cite{Shen1,AM} for periodic and stochastic homogenization. Therefore, in the present paper, we will focus on obtaining large scale estimate (\ref{ineq_Lip_Intro}), which reflects the essential feature of almost-periodic homogenization and meanwhile avoids the assumption of smoothness.

Let us review some background on the uniform Lipschitz estimates in homogenization before giving our main theorems. Historically, the uniform Lipschitz estimate has been studied for decades since late 1980s. The first breakthrough was due to \cite{AL} in which the authors proved the uniform Lipschitz estimates for Dirichlet problems with periodic coefficients by a compactness argument originating from the regularity theory in the calculus of variations and minimal surfaces. The compactness argument has been proved extremely useful and extensively applied in all kinds of homogenization problems; see \cite{AL2, GSh, GuS,KP,KLS2} for more references on this topic. However, the Lipschitz estimate for Neumann problems was not known until recent remarkable work in \cite{KLS2}, where the compactness argument was used along with a delicate iteration scheme. On the other hand, in \cite{ASm, AM} the authors developed a new approach in stochastic homogenization, as a replacement of compactness argument, to establish the uniform regularity estimates with a general scheme adapted to different boundary conditions. The advantage of this approach is that it relies only on the rates of convergence instead of the periodic structure or specific boundary correctors. So shortly afterwards, this general method was successfully applied in \cite{AS}, where the coefficients were assumed to be uniformly almost-periodic. In the present paper, we will use the similar approach to establish the boundary uniform Lipschitz estimates, down to scale $\e$, for operator $\cL_\e + \lambda$ with coefficients satisfying (i) and (ii).

\subsection{Main results}
To state the main results of this paper, we recall that locally the boundary of a $C^{1,\alpha}$ domain is the graph of a $C^{1,\alpha}$ function. Without loss of generality, we may consider a $C^{1,\alpha}$ function $\phi:\R^{d-1} \to \R$ with $\phi(0) = 0$ and $\norm{\nabla \phi}_{C^\alpha(\R^{d-1})} \le M$. Unless otherwise indicated, in the following main theorems and the rest of our paper, we will define
\begin{equation}\label{def_C1a}
\begin{aligned}
D_r &= \left\{  (x',x_d)\in \R^d: |x'|<r \text{ and } \phi(x') < x_d < \phi(x') + r \right\}, \\
\Delta_r &= \left\{  (x',x_d)\in \R^d: |x'|<r \text{ and } x_d = \phi(x') \right\}. \\
\end{aligned}
\end{equation}
Let $\omega_{k,\sigma}(\e) $ be the quantity defined in (\ref{def_omega}) for quantifying the rates of convergence. Then we state the main theorems of this paper as follows.

\begin{theorem}[Boundary Lipschitz estimate for DP]\label{thm_Lip_DP}
	Suppose that $A\in APW^2(\R^d)$ satisfies the ellipticity condition (\ref{def_ellipticity}) and  $\omega_{k,\sigma}$ satisfies the Dini-type condition:
	\begin{equation}\label{ineq_Dini_Intro}
	\int_0^1 \frac{\omega_{k,\sigma}(r)^{1/2}}{r} dr < \infty,
	\end{equation}
	for some $\sigma \in (0,1)$ and $k\ge 1$. Let $u_\e \in H^1(D_2;\R^d)$ be a weak solution of $\cL_\e(u_\e) + \lambda u_\e= F$ in $D_2$ with $u_\e = f$ on $\Delta_2$, where $\lambda \in [0,1]$. Then, for any $\e \le r\le 1$,
	\begin{equation}\label{ineq_DP_Lip}
	\left( \fint_{D_r} |\nabla u_\e|^2 \right)^{1/2} \le C\left\{ \left( \fint_{D_1} |\nabla u_\e|^2 \right)^{1/2} + \norm{f}_{C^{1,\tau}(\Delta_1)} + \norm{F}_{L^p(D_1)} \right\},
	\end{equation}
	where $p>d$ and $\tau\in (0,\alpha)$. The constant $C$ depends only on $A, p,\sigma,k, \tau, \alpha$ and $M$.	
\end{theorem}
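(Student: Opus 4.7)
The plan is to implement the excess-decay scheme developed for stochastic and uniform almost-periodic homogenization, adapted to the Weyl class $APW^2(\R^d)$, with the quantitative rate $\omega_{k,\sigma}$ playing the role of the algebraic rates available in the periodic or stochastic settings. After locally flattening $\partial\Omega$ by the graph of $\phi$, we work on $D_r$ and use the standard boundary $C^{1,\alpha}$ estimate for the homogenized constant-coefficient operator $\cL_0 + \lambda$ as the regularity input. The essential ingredient, built in the earlier sections of the paper via smoothed approximate correctors in $APW^2$ together with a flux-correcting commutator estimate, is a quantitative boundary approximation: for each $\e \le r \le 1$ there exists $u_0$ solving $\cL_0 u_0 + \lambda u_0 = F$ in $D_r$ with $u_0 = u_\e$ on $\partial D_r$ such that
\begin{equation*}
\left(\fint_{D_r} |u_\e - u_0|^2\right)^{1/2} \le C r\, \omega_{k,\sigma}(\e/r)^{1/2}\,\Psi(r),
\end{equation*}
where $\Psi(r) := \left(\fint_{D_{2r}} |\nabla u_\e|^2\right)^{1/2} + \norm{f}_{C^{1,\tau}(\Delta_{2r})} + r\,\norm{F}_{L^p(D_{2r})}$.

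Next I would introduce a Campanato-type excess
\begin{equation*}
H(r) := \frac{1}{r}\inf_{q,M}\left(\fint_{D_r}|u_\e - Mx - q|^2\right)^{1/2},
\end{equation*}
with the infimum restricted to affine maps $Mx + q$ whose trace on $\Delta_r$ matches the first-order Taylor expansion of $f$ at the origin, so that $u_\e - Mx - q$ essentially vanishes on $\Delta_r$. The one-step improvement asserts that for some fixed $\theta \in (0,1/4)$ and all $\e \le r \le 1$,
\begin{equation*}
H(\theta r) \le \tfrac{1}{2} H(r) + C\, \omega_{k,\sigma}(\e/r)^{1/2}\,\Psi(r).
\end{equation*}
The proof approximates $u_\e$ by $u_0$ on $D_r$ via Step 1 and then uses the boundary $C^{1,\tau}$ estimate for $u_0$ to extract an affine approximator at scale $\theta r$, choosing $\theta$ small enough that $\theta^\tau \le 1/4$. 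Iterating this inequality along the geometric sequence $r_j := \theta^j$ down to the first $r_{j^\ast}$ with $r_{j^\ast} \le \e$, the Dini-type condition \eqref{ineq_Dini_Intro} is precisely what makes the accumulated error $\sum_j \omega_{k,\sigma}(\e/r_j)^{1/2}$ summable, yielding $\sup_{\e \le r \le 1} H(r) \le C\,\Psi(1)$.

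Given the Campanato bound, let $M_r, q_r$ realize the infimum at scale $r$. Then $v_\e := u_\e - M_r x - q_r$ solves $\cL_\e v_\e + \lambda v_\e = F - \lambda(M_r x + q_r)$ in $D_r$ with $v_\e = f - (M_r x + q_r)$ on $\Delta_r$, and a Caccioppoli inequality on $D_{r/2}$ converts the $L^2$ control on $v_\e$ into the desired $\left(\fint_{D_{r/2}} |\nabla u_\e|^2\right)^{1/2} \le C\,\Psi(1)$; the size of $|M_r|$ is controlled by $H(r)$ together with $\norm{\nabla f}_{L^\infty(\Delta_r)}$. The principal obstacle is Step 1: in the Weyl class only $L^2$-averaged semi-norms of correctors are available (correctors need not be continuous or globally bounded), so the smoothing construction must be performed near $\partial\Omega$ with care, retaining the quantitative rate $\omega_{k,\sigma}$ through the flux correction. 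Once this boundary rate is in hand, the compactness-free iteration of Steps 2--3 goes through essentially as in the periodic or uniform almost-periodic settings.
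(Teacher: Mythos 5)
Your overall architecture --- approximation by a solution of the homogenized problem with error $C\,\omega_{k,\sigma}(\e/r)^{1/2}$ times a local quantity, excess decay for the constant-coefficient operator via the boundary $C^{1,\alpha}$ estimate, one-step improvement for $u_\e$, iteration under the Dini condition, and Caccioppoli at the end --- is exactly the paper's (Lemmas \ref{lem_ue_v}, \ref{lem_Htheta}, \ref{lem_HPhi}, \ref{lem_iter}). However, there is a genuine gap in your Step 3. Your one-step inequality reads $H(\theta r)\le \tfrac12 H(r)+C\,\omega_{k,\sigma}(\e/r)^{1/2}\Psi(r)$ with $\Psi(r)$ containing $\bigl(\fint_{D_{2r}}|\nabla u_\e|^2\bigr)^{1/2}$, i.e.\ precisely the quantity the theorem is trying to bound uniformly in $r$. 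Summability of $\sum_j\omega_{k,\sigma}(\e/r_j)^{1/2}$ therefore does not by itself yield $\sup_r H(r)\le C\Psi(1)$: the error at scale $r_j$ is multiplied by an a priori unbounded factor, and the naive iteration is circular. By Caccioppoli, $\Psi(r)$ is comparable to $H(cr)+|M_{cr}|+\text{data}$, so the drift of the slope $|M_r|$ of the best affine approximant must be tracked alongside $H$. This is what the paper's Lemma \ref{lem_iter} (Shen's two-function iteration lemma) does: one sets $h(r)=|P_r|$, proves $\max_{r\le t,s\le 2r}|h(t)-h(s)|\le CH(2r)$ and $\Phi(2r)\le H(2r)+h(2r)$, and only then does the Dini condition close the coupled system $(H,h)$ to give $\max_{\e\le r\le 1}\{H(r)+h(r)\}\le C\{H(1)+h(1)\}$. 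That lemma is not a one-line summation; its proof is where (\ref{ineq_Dini_Intro}) is actually consumed.

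A related point: your restriction of the competitors $Mx+q$ to affine maps matching the first-order Taylor expansion of $f$ at the origin pins down only the tangential part of $M$ on $\Delta_r$; the normal-derivative component of $M$ is not determined by the Dirichlet data and is exactly the part that can drift from scale to scale. So this restriction does not remove the need for the $h(r)$ bookkeeping, and it slightly weakens the excess decay you can extract from the $C^{1,\alpha}$ theory for $\cL_0+\lambda$ (the paper instead penalizes the mismatch with $f$ inside $H(t;u)$ via the terms $\|f-Px-q\|_{L^\infty(\Delta_t)}$, $t\|\nabla_{\tan}(f-Px)\|_{L^\infty(\Delta_t)}$, and $t^{1+\tau}\|\nabla_{\tan}(f-Px)\|_{C^\tau(\Delta_t)}$, and likewise carries the $\lambda$-dependent term $t^2\lambda\|Px+q\|_{L^\infty(D_t)}$ so that affine functions can be subtracted even though they are not in the kernel of $\cL_0+\lambda$). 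With the iteration step repaired along these lines, the rest of your outline, including Step 1 via the smoothed approximate correctors and the final Caccioppoli argument, matches the paper's proof.
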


We also introduce the Neumann problem(NP):
\begin{equation}\label{def_NP}
\cL_\e(u_\e) + \lambda u_\e = F \quad \text{in } \Omega, \quad \text{and} \quad \frac{\partial u_\e}{\partial \nu_\e} =g \quad \text{on } \partial\Omega, \quad \text{and} \quad \int_{\Omega} u_\e = 0,
\end{equation}
where $\lambda \ge 0$. We use $\partial u_\e/\partial\nu_\e$ to denote the co-normal derivative of $u_\e$ associated with $\cL_\e$.

\begin{theorem}[Boundary Lipschitz estimate for NP]\label{thm_Lip_NP}
	Suppose that $A\in APW^2(\R^d)$ satisfies the ellipticity condition (\ref{def_ellipticity}) and $\omega_{k,\sigma}$ satisfies the same Dini-type condition (\ref{ineq_Dini_Intro}) for some $\sigma \in (0,1), k\ge 1$. Let $u_\e \in H^1(D_2;\R^d)$ be a weak solution of $\cL_\e(u_\e) + \lambda u_\e = F$ in $D_2$ with $\partial u_\e/\partial \nu_\e = g$ on $\Delta_2$, where $\lambda \in [0,1]$. Then, for $\e \le r\le 1$,
	\begin{equation}\label{ineq_NP_Lip}
	\left( \fint_{D_r} |\nabla u_\e|^2 \right)^{1/2} \le C\left\{ \left( \fint_{D_1} |\nabla u_\e|^2 \right)^{1/2} + \norm{g}_{C^{\tau}(\Delta_1)} + \norm{F}_{L^p(D_1)} \right\},
	\end{equation}
	where $p>d$ and $\tau\in (0,\alpha)$. The constant $C$ depends only on $A, p,\sigma,k, \tau, \alpha$ and $M$.
\end{theorem}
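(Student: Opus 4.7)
The proof parallels the Dirichlet case of Theorem \ref{thm_Lip_DP} and follows the general ``compactness-free'' scheme developed by Armstrong--Smart and Armstrong--Shen \cite{ASm,AS}: quantitative (suboptimal) rates of convergence at each dyadic scale $r \ge \e$, combined with $C^{1,\alpha}$ boundary regularity for the homogenized constant-coefficient operator, are iterated on a geometric sequence of shrinking boundary neighborhoods to produce a one-step excess-decay inequality; summation of the tail under the Dini hypothesis (\ref{ineq_Dini_Intro}) then yields (\ref{ineq_NP_Lip}).

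First I would establish a boundary approximation lemma for the Neumann problem: if $\cL_\e u_\e + \lambda u_\e = F$ in $D_{2r}$ with $\partial u_\e/\partial \nu_\e = g$ on $\Delta_{2r}$, then there is a solution $v$ of the homogenized system $\cL_0 v + \lambda v = F$ in $D_r$ with $\partial v/\partial \nu_0 = g$ on $\Delta_r$ such that
\begin{equation*}
\Bigl( \fint_{D_r} |u_\e - v|^2 \Bigr)^{1/2} \le C \bigl[\omega_{k,\sigma}(\e/r)\bigr]^{1/2}\, r\, \Phi(r),
\end{equation*}
where $\Phi(r)$ collects the natural data norms, namely $(\fint_{D_{2r}}|\nabla u_\e|^2)^{1/2}$, $\|g\|_{C^\tau(\Delta_{2r})}$, and $\|F\|_{L^p(D_{2r})}$. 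This is the Neumann analogue of the approximation lemma proved in the Dirichlet setting, and it is the step in which the $APW^2$ structure enters, through the modified flux correctors and the quantity $\omega_{k,\sigma}$ controlling their growth.

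Next, using the fact that solutions of the \emph{constant-coefficient} homogenized Neumann system are $C^{1,\alpha}$ up to the boundary of a $C^{1,\alpha}$ domain when the Neumann data are $C^\tau$, I would deduce a one-step excess decay. Define
\begin{equation*}
H(r) = \inf_{q}\, \frac{1}{r}\Bigl( \fint_{D_r} |u_\e - q|^2 \Bigr)^{1/2} + \|g - \partial_{\nu_0}q\|_{C^\tau(\Delta_r)} + r^{1-d/p}\|F\|_{L^p(D_r)},
\end{equation*}
where the infimum is over affine vector-valued functions $q$ whose homogenized co-normal derivative matches $g$ at the boundary point $0$ to leading order. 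Combining the approximation lemma with the flatness (Campanato-type) estimate for $v$, one obtains, for some fixed $\theta \in (0,1/4)$,
\begin{equation*}
H(\theta r) \le \tfrac{1}{2}\, H(r) + C\bigl[\omega_{k,\sigma}(\e/r)\bigr]^{1/2}\, \Phi(1), \qquad \e \le r \le 1.
\end{equation*}
Iterating on $r_j = \theta^j$ and summing $\sum_j \omega_{k,\sigma}(\e/r_j)^{1/2}$, which is controlled by $\int_0^1 \omega_{k,\sigma}(t)^{1/2}\, t^{-1}\, dt < \infty$, yields $H(r) \le C\Phi(1)$ uniformly for $\e \le r \le 1$, and (\ref{ineq_NP_Lip}) follows by comparing $\nabla u_\e$ with $\nabla q$.

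The main obstacle is the construction of the comparison affine function $q$ and the proper excess functional $H$. Unlike the Dirichlet case, one cannot simply subtract a generic linear function: the subtracted ``tangent'' map must be chosen so that its homogenized co-normal derivative on the curved boundary $\Delta_r$ matches $g$ up to an $O(r^\tau)$ error; otherwise a persistent $O(1)$ Neumann mismatch is created at every iteration step and the geometric contraction fails. Building this class of admissible $q$'s from the boundary value $g(0)$, the normal to $\Delta_r$ at $0$, and the homogenized coefficient matrix $\widehat{A}$, while preserving the quantitative bound provided by $\omega_{k,\sigma}$, is the technical heart of the proof; once that is in place, the iteration and summation under the Dini condition are a direct adaptation of the Dirichlet argument.
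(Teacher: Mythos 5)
Your overall scheme is the one the paper uses: a Neumann approximation lemma with error $C[\omega_{k,\sigma}(\e/r)]^{1/2}$ times the local data (Lemma \ref{lem_Neu_rate}), excess decay for the constant-coefficient Neumann problem via boundary $C^{1,\alpha}$ theory (Lemma \ref{lem_Neu_L0}), a one-step decay for $u_\e$ (Lemma \ref{lem_Neu_Le}), and iteration under the Dini condition. Two points, however, deserve correction. First, your one-step inequality $H(\theta r)\le \tfrac12 H(r)+C[\omega_{k,\sigma}(\e/r)]^{1/2}\Phi(1)$ is circular as written: the approximation lemma produces an error proportional to $\Psi(2r)$, i.e.\ to the data measured at scale $2r$, and $(\fint_{D_{2r}}|\nabla u_\e|^2)^{1/2}$ is not a priori controlled by the scale-one quantity --- that control is precisely the conclusion of the theorem. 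The paper keeps $\Psi(2r)$ on the right, bounds it by $J(2r;u_\e)+h(2r)$ with $h(r)=|P_r|$ the magnitude of the optimal affine slope, verifies that the oscillation of $h$ on $[r,2r]$ is controlled by $J(2r;u_\e)$, and then invokes the two-function iteration result (Lemma \ref{lem_iter}) whose hypotheses are exactly these coupled conditions together with the Dini integrability; a bare geometric summation of $\sum_j\omega_{k,\sigma}(\e/\theta^j)^{1/2}$ against a fixed constant does not close the argument.

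Second, the construction you identify as the ``technical heart'' --- building a restricted class of affine comparison maps whose homogenized co-normal derivative matches $g$ at the boundary point --- is not needed and is not how the paper proceeds. The functional $J(t;u)$ takes the infimum over \emph{all} $P\in\R^{d\times d}$ and $q\in\R^d$ and simply includes the mismatch terms $t\|g-\partial_{\nu_0}(Px)\|_{L^\infty(\Delta_t)}$ and $t^{1+\tau}\|g-\partial_{\nu_0}(Px)\|_{C^\tau(\Delta_t)}$ inside the quantity being minimized; the key algebraic observation is that $\cL_0(Px+q)=0$, so $w=v-Px-q$ solves a Neumann problem with data $g-\partial_{\nu_0}(Px)$ and right-hand side $F-\lambda(Px+q)$, to which the constant-coefficient $C^{1,\alpha}$ estimate applies directly. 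The feared $O(1)$ Neumann mismatch is thus penalized rather than excluded, and the contraction $J(\theta r;w)\le\tfrac12 J(r;w)$ follows exactly as in the Dirichlet case. With these two adjustments your outline coincides with the paper's proof.
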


\subsection{Strategy of proof}
We now present the outline of our approach, including some key ideas in the proof. Recall that the homogenized system is
\begin{equation}\label{def_homo}
\cL_0 u_0 + \lambda u_0 = F, \qquad \text{subject to a certain boundary condition,}
\end{equation}
where $\cL_0 = -\text{div}(\widehat{A} \nabla)$ and $\widehat{A}$ is a constant matrix known as homogenized or effective matrix which is defined in (\ref{def_Ahat}). The proof of Theorem \ref{thm_Lip_DP} or \ref{thm_Lip_NP} is roughly divided into three steps, which follow the same line as \cite{Shen1}: 

(1) Establish the $L^2$ rate of convergence in Lipschitz domains, i.e., the error estimate of $\norm{u_\e - u_0}_{L^2}$;

(2) Show that $u_\e$ satisfies the so-called \emph{flatness property} (how well it may be approximated by affine functions) as long as $u_0$ does; 

(3) Iterate step (2) down to microscopic scale $\e$, under the additional Dini-type condition (\ref{ineq_Dini_Intro}).

The rate of convergence in $L^2$ will be shown in Section 3. In fact, if $\Omega$ is a bounded Lipschitz domain, and $u_\e, u_0$ are the weak solutions of (\ref{def_DP}) and the corresponding homogenized system (\ref{def_homo}), respectively, then
\begin{equation}\label{ineq_L2Lip_Intro}
\norm{u_\e - u_0}_{L^2(\Omega)} \le C\omega_{k,\sigma}(\e)^{1/2} \Big\{ (1+\lambda)^{-1/2}\norm{F}_{L^2(\Omega)} + (1+\lambda)^{1/2} \norm{f}_{H^1(\partial\Omega)} \Big\}.
\end{equation}
The proof of (\ref{ineq_L2Lip_Intro}), in contrast to the periodic homogenization, is based on the estimates of so called approximate correctors established in \cite{ShenZhuge2}; see Section 2 for more details. We should point out that the proofs of those estimates for approximate correctors are extremely difficult and involved with compactness and ergodic arguments. The rate $O(\omega_{k,\sigma}(\e)^{1/2})$ in (\ref{ineq_L2Lip_Intro}) seems to be suboptimal. But as far as we know, it is the best result derived for almost-periodic homogenization in Lipschitz domains and it is sufficient for us to proceed with our argument for uniform Lipschitz estimates.

The second and third steps of the proof of Theorem \ref{thm_Lip_DP} and \ref{thm_Lip_NP} are laid out in section 4. Based on the \emph{flatness property}  of weak solutions of $\cL_0 + \lambda$, we are able to prove the following \emph{flatness property} of $u_\e$:
\begin{equation}\label{ineq_flatness_Intro}
H(\theta r; u_\e) \le \frac{1}{2} H(r;u_\e) + C [\omega_{k,\sigma}(\e/r)]^{1/2} \Phi(2r),
\end{equation}
for some fixed $0<\theta<1$ and all $\e < r<1$, where $H$ and $\Phi$ are defined in (\ref{def_H}) and (\ref{def_Phi}), respectively. Notice that $H(r;u_\e)$ quantifies the local regularity property of $u_\e$ and the second term on the right-hand side of (\ref{ineq_flatness_Intro}) is the error term between $u_\e$ and $u_0$. For $r>\e$, we may expect
this error term to be much smaller than the improvement in the flatness property. Then we may iterate (\ref{ineq_flatness_Intro}) down to microscopic scale $\e$ to obtain a uniform estimate for $H(r; u_\e)$ for all $\e<r<1$. This idea can be fulfilled under the extra Dini-type condition (\ref{ineq_Dini_Intro}). This is exactly the technical reason why the condition (\ref{ineq_Dini_Intro}) is necessary in our proof. Fortunately, this condition, closely related to the almost-periodicity of the coefficients $A$, can be easily satisfied in applications; see Lemma \ref{lem_rate} or Table \ref{tab_1} below.

\subsection{Further results and discussions}
In the last section, we also discuss some further applications of Lipschitz estimate and its proof. The first application is to improve the estimate for $\nabla \chi_T$ by interior Lipschitz estimate for $\cL_\e + \lambda$ with $\lambda = 1$. We show that with (\ref{ineq_Dini_Intro}) imposed, $\norm{\nabla \chi_T}_{S_1^2} $ is uniformly bounded, instead of just being bounded by $CT^\sigma$ for $\sigma>0$. The second application is devoted to the large scale Rellich estimate in $L^2$. More precisely, we will show that
\begin{equation}\label{ineq_RelDP}
\left(\fint_{\Omega_r} |\nabla u_\e|^2 \right)^{1/2} \le C\norm{\nabla_{\text{tan}} u_\e}_{L^2(\Omega)},
\end{equation}
and
\begin{equation}\label{ineq_RelNP}
\left(\fint_{\Omega_r} |\nabla u_\e|^2 \right)^{1/2} \le C\norm{\frac{\partial u_\e}{\partial \nu_\e} }_{L^2(\Omega)},
\end{equation}
for all $r\ge\omega_{k,\sigma}(\e)$. These estimates imply the usual Rellich estimate if $\omega_{k,\sigma}(\e) = O(\e)$ and $A$ possesses symmetry and certain smoothness; see the remark after Theorem \ref{thm_Rellich_NP}.

The last application is the large scale boundary H\"{o}lder estimates for both Dirichlet and Neumann problems. The argument follows a similar but simpler way as boundary Lipschitz estimate. The main point here is that we do not need any extra condition on the convergence rate $\omega_{k,\sigma}(\e)$. Indeed, the fact $\omega_{1,\sigma}(\e) \to 0$ as $\e\to 0$ is sufficient for us to establish the uniform boundary H\"{o}lder estimate. We state the result as follows.

\begin{theorem}[Boundary H\"{o}lder estimate for DP]\label{thm_holder_DP}
	Suppose that $A\in APW^2(\R^d)$ satisfies the ellipticity condition (\ref{def_ellipticity}). Let $u_\e \in H^1(D_2;\R^d)$ be a weak solution of $\cL_\e(u_\e) + \lambda u_\e= F$ in $D_2$ with $u_\e = f$ on $\Delta_2$, where $\lambda \in [0,1]$. Then, for any $\e \le r\le 1$,
	\begin{equation}\label{ineq_Holder_Intro}
	\left( \fint_{D_r} |\nabla u_\e|^2 \right)^{1/2} \le  C r^{\gamma-1} \Bigg\{ \left( \fint_{D_1} |\nabla u_\e |^2 \right)^{1/2} + \left( \fint_{D_1} |F|^p \right)^{1/p}
	+ \norm{f}_{C^{0,1}(\Delta_{1})} \Bigg\},
	\end{equation}
	where $\gamma < 2-d/p, p\ge 2, p>d/2$. In particular, if $p = d$, then (\ref{ineq_Holder_Intro}) holds for all $\gamma \in (0,1)$.
\end{theorem}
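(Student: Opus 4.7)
The plan is to follow the same three-step structure used for Theorems \ref{thm_Lip_DP} and \ref{thm_Lip_NP}, but with the key difference that boundary H\"older regularity of the homogenized solution provides an improvement factor $\theta^\gamma$ with $\gamma < 2 - d/p$, supplying geometric slack that removes the need for any Dini-type condition on $\omega_{k,\sigma}$. First I would reduce to the case $f \equiv 0$ on $\Delta_2$ by subtracting a $C^{0,1}$ extension $\tilde f$ of $f$ from $u_\e$; the resulting divergence term $\text{div}(A(\cdot/\e)\nabla \tilde f)$ and the lower-order contribution $\lambda \tilde f$ can be absorbed into a modified right-hand side whose $L^p$-norm is bounded by $\|F\|_{L^p(D_1)} + \|f\|_{C^{0,1}(\Delta_1)}$. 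Throughout, the working quantity will be
\[
H(r) = \left(\fint_{D_r}|u_\e|^2\right)^{1/2},
\]
and the ultimate goal is to prove $H(r) \le C r^\gamma D_0$ for $\e \le r \le 1$, where $D_0$ denotes the bracketed data in (\ref{ineq_Holder_Intro}).

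The core technical step is a one-step iteration: there exist $\theta \in (0,1/4)$ and $C > 0$, depending on $\gamma, A, p, \alpha, M$, such that for every $\e \le r \le 1$,
\[
H(\theta r) \le \tfrac{1}{2}\theta^\gamma H(r) + C (\theta r)^\gamma \bigl[\omega_{1,\sigma}(\e/r)^{1/2} \tilde D(r) + D_0\bigr],
\]
where $\tilde D(r)$ is a scale-invariant surrogate for the size of $u_\e$ at scale $r$ (e.g.\ $\sup_{s \in [r,1]} s^{-\gamma} H(s) + D_0$, controlled by a Caccioppoli-type bound). To prove the iteration, I would let $v$ solve the homogenized Dirichlet problem $\cL_0 v + \lambda v = F$ in $D_r$ with $v = u_\e$ on $\partial D_r \setminus \Delta_r$ and $v = 0$ on $\Delta_r$. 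The rescaled version of the Lipschitz-domain $L^2$ estimate (\ref{ineq_L2Lip_Intro}) from Section 3, combined with the boundary Caccioppoli inequality applied to $u_\e$, yields
\[
\left(\fint_{D_r}|u_\e - v|^2\right)^{1/2} \le C \omega_{1,\sigma}(\e/r)^{1/2}\, r^\gamma\, \tilde D(r),
\]
while classical boundary $C^{0,\gamma'}$ regularity for the constant-coefficient system $\cL_0 + \lambda$ in a $C^{1,\alpha}$ domain, with any fixed $\gamma' \in (\gamma, 2 - d/p)$, applied to $v$ with vanishing Dirichlet data on $\Delta_r$, gives the flatness improvement
\[
\left(\fint_{D_{\theta r}}|v|^2\right)^{1/2} \le C \theta^{\gamma'} \left(\fint_{D_r}|v|^2\right)^{1/2} + C (\theta r)^{\gamma'} \left(\fint_{D_r}|F|^p\right)^{1/p}.
\]
Choosing $\theta$ so that $C \theta^{\gamma' - \gamma} \le 1/2$, applying the triangle inequality, and using $\gamma' > \gamma$ to absorb the $F$-contribution into $(\theta r)^\gamma D_0$ yields the claimed iteration.

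Iterating along $r_k = \theta^k$ down to the largest $k$ with $\theta^k \ge \e$, and renormalizing $h_k = H(r_k)/r_k^\gamma$, the iteration becomes $h_{k+1} \le \tfrac{1}{2} h_k + C[\omega_{1,\sigma}(\e\theta^{-k})^{1/2} \tilde D(r_k) + D_0]$; the geometric factor $1/2$ yields a convergent sum $\sum 2^{-(k-j)}$ which, together with the boundedness of $\omega_{1,\sigma}$ on $(0,1]$ (guaranteed by $A \in APW^2(\R^d)$), controls the accumulated error uniformly, producing $H(r) \le C r^\gamma D_0$ for all $r \in [\e,1]$. A final application of the standard boundary Caccioppoli inequality (independent of $\e$) converts this into the gradient bound (\ref{ineq_Holder_Intro}), after undoing the reduction to $f \equiv 0$. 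The main obstacle will be the one-step iteration, specifically arranging the $\omega_{1,\sigma}(\e/r)^{1/2}$ error so that its coefficient is paid in units of $(\theta r)^\gamma$ rather than of $H(r)$ itself; this placement of scale-invariant factors, combined with the strict inequality $\gamma < 2 - d/p$ that creates the gap $\gamma' - \gamma > 0$, is exactly what produces the geometric slack and obviates any integrability requirement on $\omega_{1,\sigma}$ beyond the qualitative fact that $\omega_{1,\sigma}(\e) \to 0$ as $\e \to 0$.
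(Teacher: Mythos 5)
Your overall strategy coincides with the paper's: compare $u_\e$ on $D_r$ with a solution $v$ of the homogenized Dirichlet problem, bound $u_\e-v$ by $\omega_{k,\sigma}(\e/r)^{1/2}$ times a scale-invariant measure of the data via the Lipschitz-domain $L^2$ rate of convergence, use the boundary $C^{0,\gamma'}$ estimate for $\cL_0+\lambda$ with $\gamma'\in(\gamma,2-d/p)$ to get decay of the Campanato excess, and close the iteration using only the qualitative fact that $\omega_{k,\sigma}(t)\to 0$. Two steps as written, however, do not hold up. First, the reduction to $f\equiv 0$: if $\tilde f$ is merely a $C^{0,1}$ extension of $f$, then $\text{div}(A(\cdot/\e)\nabla\tilde f)$ is only an $H^{-1}$ distribution (the coefficients are bounded measurable, so the divergence cannot be expanded), and it cannot be absorbed into a right-hand side with $L^p$-norm controlled by $\norm{F}_{L^p}+\norm{f}_{C^{0,1}}$. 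Worse, the divergence-form datum $G=A(\cdot/\e)\nabla\tilde f$ oscillates at scale $\e$, so the homogenized problem for $u_\e-\tilde f$ is not obtained by simply replacing $A(\cdot/\e)$ with $\widehat A$, and the convergence-rate estimate you invoke does not apply as stated. The paper avoids this entirely by never modifying the equation: $f$ is carried inside the quantity $\Phi_\gamma$ through the terms $\norm{f-q}_{L^\infty(\Delta_t)}$ and $t\norm{\nabla_{\tan}f}_{L^\infty(\Delta_t)}$, and the comparison function is taken with $v=u_\e$ on a good slice $\partial D_t$, so the $L^2$ rate theorem applies directly.

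Second, the closure of the iteration. Your recursion is $h_{k+1}\le\tfrac12 h_k+C\,\omega_{1,\sigma}(\e\theta^{-k})^{1/2}\tilde D(r_k)+CD_0$ with $\tilde D(r_k)\approx\max_{j\le k}h_j+D_0$, and you assert that the geometric factor together with mere boundedness of $\omega_{1,\sigma}$ on $(0,1]$ controls the accumulated error via $\sum 2^{-(k-j)}$. That is not sufficient: the error term multiplies the solution-size quantity $\tilde D(r_k)$, which feeds back into the recursion, so if $C\,\omega_{1,\sigma}(\e\theta^{-k})^{1/2}$ is only bounded by a possibly large constant $M$, then $\max_{j\le k}h_j$ can grow like $(CM)^k$. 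What actually closes the argument — and what the paper does — is that $\omega_{k,\sigma}(t)\to0$ as $t\to0$ gives a fixed $N$ with $C\,\omega_{k,\sigma}(\e/r)^{1/2}\le 1/2$ for all $r\ge N\e$, so the renormalized quantity does not increase on those scales, while the remaining scales $\e\le r\le N\e$ number at most a constant independent of $\e$ and are dispatched by the trivial doubling bound $\Phi_\gamma(r;u_\e)\le C\Phi_\gamma(N\e;u_\e)$. You gesture at the correct mechanism in your final sentence, but the step as actually justified relies on the wrong property of $\omega_{1,\sigma}$.
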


A similar estimate also works for Neumann problems (\ref{def_NP}) with $\norm{f}_{C^{0,1}(\Delta_{1})}$ replaced by $\norm{g}_{L^\infty(\Delta_{1})}$; see Theorem \ref{thm_holder_NP}.

Overall, we can see from previous results the close relationship between the rate $\omega_{k,\sigma}(\e)$ and uniform regularity in different situations. This idea more or less has been shown in \cite{Shen1} for periodic homogenization, whose rate of convergence is always the same, i.e., $\omega_{1,\sigma}(\e) = O(\e)$. But it is of particular interest for almost-periodic homogenization since the rate of convergence could be arbitrarily slow. In the following table, we will summarize all the uniform regularity results obtained in this paper and \cite{ShenZhuge2}, and clarify how the function $\omega_{k,\sigma}(\e)$, which quantifies the rate of convergence, is related to the certain uniform regularity.

\begin{table}[h]
	\caption{Relationship between convergence rate and regularity}\label{tab_1}
	\begin{center}
		\begin{tabular}{|l|l|l|}
			\hline
			\bf Sufficient condition on $A$ & \bf Rate of convergence & \bf Large scale regularity\\
			\hline
			No extra condition needed & $\omega_{1,\sigma}(\e) \to 0$ as $\e \to 0 $ & H\"{o}lder estimates  \\
			\hline
			\multirow{2}{*}{$\rho_k(L,L) \lesssim \ln(1+L)^{-\alpha}, \alpha > 3$} & \multirow{2}{*}{$\displaystyle \int_0^1 \frac{\omega_{k,\sigma}(r)^{1/2}}{r} dr< \infty $ }& Lipschitz estimates;\\
			& & and $\norm{\nabla \chi_T}_{S_1^2} \le C$ \\
			\hline
			$\rho_k(L,L) \lesssim L^{-1-\alpha}, \alpha > 0$; & \multirow{3}{*}{$\omega_{k,\sigma}(\e ) \lesssim \e$ }& $L^2$ Rellich estimate; \\
			or $A$ is sufficiently smooth and& & existence of true corrector  \\
			quasi-periodic\cite{AGK} & & $\chi$ and $\norm{\chi}_{S_1^2} \le C$\cite{ShenZhuge2} \\
			\hline
		\end{tabular}
	\end{center}
\end{table}

Throughout this paper we will use $\fint_E f$ to denote the average integral of function $f$ over a set $E$, and $C$ to denote constants that depend at most on $A,\Omega$ 
and other scale-independent parameters(e.g. $k, \sigma, p$, etc.), but never on $\e, T$ or other scale-dependent parameters(e.g., $\lambda, L,r$, etc.).

\section{Preliminaries for almost-periodic homogenization}
In this section, we will briefly review some preliminaries of almost-periodic homogenization along with particular emphasis on the characterization of almost-periodicity and approximate correctors. Except for some classical contents, most of the them were formulated in our recent paper \cite{ShenZhuge2}.

\subsection{Homogenization}
We start with spaces of almost-periodic functions. Let $\text{Trig}(\R^d)$ denote the set of real trigonometric polynomials in $\R^d$. A function $f$ in $L^2_{\text{loc}}(\R^d) $ is said to belong to $B^2(\R^d)$ if $f$ is the limit of a sequence of functions in $\text{Trig}(\R^d)$ with respect to the semi-norm
\begin{equation}\label{cond_B2}
\norm{f}_{B^2} = \limsup_{R\to\infty} \left( \fint_{B(0,R)} |f|^2 \right)^{1/2}.
\end{equation}
Functions in $B^2(\R^d)$ are usually said to be almost-peiodic in the sense of Besicovitch (1926). It is not hard to see that if $g\in L^\infty(\R^d) \cap B^2(\R^d)$ and $f\in B^2(\R^d)$, then $fg\in B^2(\R^d)$.

Let $f\in L^1_{\text{loc}}(\R^d)$. A number $\ag{f}$ is called the mean value of $f$ if
\begin{equation}
\lim_{\e\to 0^+} \int_{\R^d} f(x/\e) \varphi(x) dx = \ag{f} \int_{\R^d} \varphi,
\end{equation}
for any $\varphi \in C_0^\infty(\R^d)$. It is known that if $f\in B^2(\R^d)$, then $f$ has a mean value. Under the equivalent relation that $f\sim g$ if $\norm{f-g}_{B^2} = 0$, the set $B^2(\R^d)$ is a Hilbert space with the inner product defined by $(f,g) = \ag{fg}$. Furthermore, one has the following Weyl's orthogonal decomposition,
\begin{equation}
B^2(\R^d;\R^{d\times m}) = V^2_{\text{pot}} \oplus V^2_{\text{sol}} \oplus \R^{d\times m}.
\end{equation}
where $V^2_{\text{pot}}$ (resp., $V^2_{\text{sol}}$) denotes the closure of potential (resp., solenoidal) trigonometric polynomials with mean value zero in $B^2(\R^d;\R^{d\times m})$. Assume $A =(a^{\alpha\beta}_{ij}) \in B^2(\R^d)$ satisfies the ellipticity condition (\ref{def_ellipticity}). For each $1\le j\le d$ and $1\le \beta \le m$, let $\psi_j^\beta = (\psi_{ij}^{\alpha\beta})$ be the unique function in $V^2_{\text{pot}}$ satisfying the following auxiliary equations
\begin{equation}\label{def_Vpot}
(a_{ik}^{\alpha\gamma} \psi_{kj}^{\gamma\beta}, \phi_i^\alpha) = - (a_{ij}^{\alpha\beta}, \phi_i^\alpha) \qquad \text{for any } \phi = (\phi_i^\beta) \in V^2_{\text{pot}}.
\end{equation}
It is shown in \cite{JKO} that $A$ admits homogenization with homogenized matrix $\widehat{A} = (\widehat{a}_{ij}^{\alpha\beta})$ defined by
\begin{equation}\label{def_Ahat}
\widehat{a}_{ij}^{\alpha\beta} = \ag{a_{ij}^{\alpha\beta}} + \ag{a_{ik}^{\alpha\gamma} \psi_{kj}^{\gamma\beta}}.
\end{equation}
Moreover, $\widehat{A^*} = (\widehat{A})^*$, where $A^*$ denotes the adjoint of $A$. The following is a statement of the homogenization theorem whose proof actually was contained in \cite{ShenZhuge2}.

\begin{theorem}\label{thm_homo}
	Suppose $A=(a_{ij}^{\alpha\beta})$ satisfies the ellipticity condition  (\ref{def_ellipticity}) and each $a_{ij}^{\alpha\beta} \in B^2(\R^d)$. Let $\Omega$ be a bounded Lipschitz domain in $\R^d$ and $F\in H^{-1}(\Omega;\R^m)$. Let $u_\e \in H^1(\Omega;\R^m)$ be a weak solution of $\cL_\e(u_\e) + \lambda u_\e = F$. Suppose that $u_\e \to u_0$ weakly in $H^1(\Omega;\R^m)$. Then $A(x/\e) \nabla u_\e \to \widehat{A} \nabla u_0$ weakly in $L^2(\Omega;\R^{d\times m})$. Consequently, if $f\in H^{1/2}(\partial \Omega;\R^m)$ and $u_\e$ is the weak solution to the Dirichlet problem:
	\begin{equation}
	\cL_\e(u_\e) + \lambda u_\e= F \quad \text{ in } \Omega \quad \text{and} \quad u_\e=f \quad \text{on } \partial\Omega,
	\end{equation}	
	Then, as $\e\to 0$, $u_\e\to u_0$ weakly in $H^1 (\Omega;\R^m)$, where $u_0$ is the weak solution to
	\begin{equation}
	\cL_0(u_0) + \lambda u_0= F \quad \text{ in } \Omega \quad \text{and} \quad u_0=f \quad \text{on } \partial\Omega.
	\end{equation}  
\end{theorem}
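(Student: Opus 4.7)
The plan is to apply Tartar's oscillating test function method, using the approximate correctors from \cite{ShenZhuge2} in place of true correctors (which may fail to exist under the mere $B^2$ assumption). The goal is to identify the weak $L^2$-limit of the flux $A(\cdot/\e)\nabla u_\e$ as $\widehat{A}\nabla u_0$; the convergence of the equation and the boundary condition then follow by standard arguments. First I would use ellipticity and the given weak $H^1$ convergence to conclude that $\nabla u_\e$ is bounded in $L^2(\Omega;\R^{d\times m})$, hence after extracting a subsequence $A(\cdot/\e)\nabla u_\e \rightharpoonup \xi$ weakly in $L^2$ for some $\xi$. Testing the equation $\cL_\e(u_\e) + \lambda u_\e = F$ against any $\phi \in C_0^\infty(\Omega)$ and passing to the limit gives $-\operatorname{div}\xi + \lambda u_0 = F$ in the sense of distributions, so the only point is to show $\xi = \widehat{A}\nabla u_0$.

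For this identification I would fix $(\beta,j)$, let $P_j^\beta(x) = x_j e^\beta$, and introduce the approximate correctors $\chi_{T,j}^{*\beta}$ for the \emph{adjoint} operator $\cL_1^*$, satisfying an equation of the form $\cL_1^*(P_j^\beta + \chi_{T,j}^{*\beta}) + T^{-2}\chi_{T,j}^{*\beta} = 0$ in $\R^d$. From \cite{ShenZhuge2} one has that $\nabla \chi_{T,j}^{*\beta}$ is uniformly bounded in $B^2(\R^d)$, converges in $B^2$ to the element $\psi_j^{*\beta}$ associated with $A^*$ as in (\ref{def_Vpot}), and, with a suitable coupling $T = T(\e)\to\infty$, one has $\e\,\chi_{T(\e),j}^{*\beta}(\cdot/\e) \to 0$ in $L^2(\Omega)$. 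Using the oscillating test function
\begin{equation*}
v_\e(x) = \phi(x)\bigl[P_j^\beta(x) + \e\,\chi_{T(\e),j}^{*\beta}(x/\e)\bigr],\qquad \phi\in C_0^\infty(\Omega),
\end{equation*}
I would write down the weak formulation for $u_\e$ tested against $v_\e$ and subtract the weak formulation of the adjoint corrector equation tested against $\phi\, u_\e$. The leading-order oscillating contributions cancel, the $\lambda$-terms pass to the limit by Rellich compactness, and the remaining products are of the form ``weakly convergent $L^2$ factor $\times$ an $x/\e$-oscillation of a function in $B^2$'', which by the mean value property for Besicovitch-almost-periodic functions converge to the weak limit times the mean value. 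After sending $\e\to 0$ and then $T\to\infty$, the mean values combine into $\widehat{A}^*$ via (\ref{def_Ahat}), yielding $\int_\Omega \xi_{ij}^{\alpha\beta}\,\partial_i\phi\,dx = \int_\Omega (\widehat{A}\nabla u_0)_{ij}^{\alpha\beta}\,\partial_i\phi\,dx$ for every $\phi$ and every $(\beta,j)$, hence $\xi = \widehat{A}\nabla u_0$.

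The main obstacle is the passage to the limit in products of weakly convergent and almost-periodically oscillating factors: unlike the periodic case, the correctors themselves are not bounded, only $\nabla \chi_T^*$ lies uniformly in $B^2$, and the coupling $T = T(\e)$ must be tuned so that simultaneously $\e\,\chi_{T(\e)}^*(\cdot/\e)$ is negligible in $L^2$ and the residual $T^{-2}\chi_{T(\e)}^*$ in the corrector equation produces no obstruction. This is precisely where the quantitative estimates on $\chi_T^*$ and $\nabla\chi_T^*$ from \cite{ShenZhuge2} (together with the observation that $L^\infty\cap B^2$ times $B^2$ lies in $B^2$, needed to handle the product $A^*\nabla\chi_T^*$) do all the work. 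Once $\xi$ is identified, uniqueness of solutions to the limiting problem upgrades the subsequential convergence to convergence of the whole family, and for the Dirichlet problem the boundary condition $u_0 = f$ on $\partial\Omega$ follows from the fact that $u_\e - F$ lies in $H_0^1(\Omega)$ for any fixed $H^1$-extension $F$ of $f$, a property preserved under weak $H^1$ limits.
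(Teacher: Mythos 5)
The paper does not actually prove Theorem \ref{thm_homo}: it defers to \cite{JKO} and \cite{ShenZhuge2}, where the argument is Tartar's oscillating test function method carried out in the Besicovitch framework. Your outline is in the same spirit, and the surrounding architecture is fine (flux identification, passing to the limit in the equation, Rellich compactness for the $\lambda$-term, uniqueness of the limit problem to upgrade subsequential convergence, weak closedness of $H^1_0(\Omega)$ for the boundary condition).

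There is, however, a genuine gap in the way you build the oscillating test function, located exactly where you say the quantitative estimates on $\chi_T^*$ ``do all the work.'' The theorem assumes only $A\in B^2(\R^d)$, whereas the facts you invoke --- that $\e\,\chi^*_{T(\e)}(\cdot/\e)\to 0$ in $L^2(\Omega)$ and that the residual $T^{-2}\chi^*_T$ is harmless --- are the spatially uniform $S^2_1$-type bounds of Theorem \ref{thm_chiT_1}, proved only under $A\in APW^2(\R^d)$ (they rest on $\rho_k(L,R)\to 0$); the $B^2$ seminorm (\ref{cond_B2}) carries no $\sup_x$, so no uniform sublinearity of $\chi_T^*$ is available here. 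Concretely, after rescaling, the zeroth-order residual of the adjoint corrector equation tested against $\phi u_\e$ is $\e^{-1}T^{-2}\int_\Omega \chi^*_T(x/\e)\,\phi u_\e\,dx$, of size $(\e T)^{-1}\cdot T^{-1}\big(\fint_{B(0,C/\e)}|\chi^*_T|^2\big)^{1/2}$ times $\norm{\phi u_\e}_{L^2}$. For fixed $T$ this is of order $\e^{-1}T^{-1}$ and blows up as $\e\to 0$, so your iterated limit ``$\e\to 0$ then $T\to\infty$'' does not close; coupling $T$ to $\e$ instead forces you to control $\sup_x T^{-2}\fint_{B(x,T)}|\chi^*_T|^2$, which is precisely the $APW^2$ input. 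The standard repair under the bare $B^2$ hypothesis --- and the route of \cite{JKO} --- is to dispense with approximate correctors: approximate $\psi^*\in V^2_{\text{pot}}$ by gradients of genuine trigonometric polynomials $\nabla p_n$ in the $B^2$ seminorm, use $\phi(x)\big(P_j^\beta(x)+\e\,p_n(x/\e)\big)$ as the test function (an exact gradient up to the cutoff, with no residual zeroth-order term), send $\e\to 0$ first and $n\to\infty$ second, and control the error by $\norm{\psi^*-\nabla p_n}_{B^2}$ using $\limsup_{\e\to 0}\norm{g(\cdot/\e)}_{L^2(\Omega)}\le C\norm{g}_{B^2}$ together with the mean value property for $B^2$ functions. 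With that substitution the rest of your argument goes through as written.
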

We point out here that $B^2(\R^d)$ is usually the largest space of almost-periodic functions in which the homogenization theorem could be established. However, this space seems unsuitable for obtaining quantitative theory due to the lack of spacial uniformity.

\subsection{Almost-periodicity and approximate correctors}
We define a subspace of $B^2(\R^d)$,
\begin{equation*}
APW^2(\R^d) = \text{the closure of Trig} (\R^d) \text{ with respect to } W^2 \text{ semi-norm},
\end{equation*}
where the $W^2$ semi-norm is defined in (\ref{def_W2}). The functions in $APW^2(\R^d)$ are called almost-periodic in the sense of H. Weyl. Note that in the definition of $APW^2(\R^d)$, the regularity assumption is completely removed and hence this space is much larger than 
the classes of uniformly almost-periodic functions considered in \cite{Shen2, AS, AGK}. Earlier work in \cite{ShenZhuge2} also indicates that $APW^2(\R^d)$ is a fairly suitable space for quantitative homogenization. All of the following settings and results concerning the coefficient matrix $A\in APW^2(\R^d)$ were formulated in \cite{ShenZhuge2}, based on the ideas of \cite{Shen2} and \cite{AGK}.

For $g\in L^p_{\loc}(\R^d)$ and $R>0$, we define  the  norm
\begin{equation}\label{def_Sp}
\| g\|_{S_R^p} =\sup_{x\in \R^d} \left(\fint_{B(x, R)} |g|^p\right)^{1/p}.
\end{equation}
For $y, z\in \R^d$, define the difference operator
\begin{equation}\label{def_Diff}
\Delta_{yz}  g (x):=  g(x+y)- g(x+z) .
\end{equation}
Let $P=P_k =\big\{ (y_1, z_1), \dots, (y_k, z_k) \big\}$ be a collection of pairs $(y_i, z_i)\in \R^d\times \R^d$, and
\begin{equation*}
Q=\big\{ (y_{i_1}, z_{i_1}), \dots, (y_{i_\ell}, z_{i_\ell}) \big\}
\end{equation*}
be a subset of $P$ with $i_1<i_2<\dots <i_\ell$. Define
\begin{equation*}
\Delta_Q (g)=\Delta_{y_{i_1} z_{i_1}} \cdots \Delta_{y_{i_\ell} z_{i_\ell}} (g).
\end{equation*}
To quantify the almost periodicity of the coefficient matrix $A$,
we introduce 
\begin{equation}\label{def_rho_k}
\rho_{k} ( L, R)
=\sup_{y_1\in \R^d}\inf_{|z_1|\le L} \cdots\sup_{y_k\in \R^d} \inf_{|z_k|\le L}
\sum
\|\Delta_{Q_1} (A)\|_{S^p_R} 
\cdots \|\Delta_{Q_\ell}  (A)\|_{S^p_R},
\end{equation}
where the sum is taken over all partitions of $P=Q_1\cup Q_2 \cup \cdots \cup Q_\ell$ with $1\le \ell\le k$ and $Q_i\cap Q_j = \emptyset$ if $i\neq j$.
The exponent $p$ in  (\ref{def_rho_k}) depends on $k$ and is given by
\begin{equation}\label{def_expt_p}
\frac{k}{p} =\frac{1}{2}-\frac{1}{\bar{q}},
\end{equation}
where $\bar{q}>2$ is the exponent related to the reverse H\"older estimate (Meyers' estimate) of solutions of elliptic operators, which depends only on $d,m$ and $\mu$. Note that $\rho_k(L,R) \le C_k \rho_1(L,R)$ and $\rho_1(L,R) \to 0$ as $L,R\to\infty$.

\begin{definition}
	Let $P_j^\beta(x) = x_j e^\beta$, where $e^\beta = (0,\cdots,1,\cdots,0)$ with $1$ in the $\beta^{\text{th}}$ position. For any $T>0$, let $u = \chi^\beta_{T,j} = (\chi^{1\beta}_{T,j},\cdots,\chi^{m\beta}_{T,j})$ be the weak solution of
	\begin{equation}\label{def_corrector}
	-\text{div} (A(x) \nabla u) + T^{-2} u =  \text{div}(A(x)\nabla P_j^\beta) \quad \text{in } \R^d,
	\end{equation}
	given by \cite[Lemma 3.1]{ShenZhuge2}. The matrix-valued functions $\chi_T = (\chi_{T,j}^\beta) = (\chi_{T,j}^{\alpha\beta})$ are called the approximate correctors for the family of operators $\{\cL_\e \}$.
\end{definition}

The importance of approximate correctors is due to the fact that
\begin{equation}\label{ineq_chiT2psi}
\norm{\nabla \chi_T - \psi}_{B^2} \to 0, \qquad \text{as } T \to \infty,
\end{equation}
and thus $\chi_T$ could be regarded as an approximation of the usual correctors.

\begin{theorem}\label{thm_chiT_1}
	Suppose that $A\in APW^2(\R^d)$ and satisfies the ellipticity condition (\ref{def_ellipticity}). 
	Fix $k\ge 1$ and $\sigma\in (0,1)$.
	Then there exists a constant $c>0$, depending only on $d$ and $k$, such that
	for any $T\ge 2$, 
	\begin{equation}\label{ineq_dchiT_1}
	\|\nabla \chi_T\|_{S^2_1} \le C_\sigma T^\sigma,
	\end{equation}
	and
	\begin{equation}\label{ineq_chiT_1}
	\|\chi_T\|_{S^2_1}
	\le C_\sigma \Theta_{k,\sigma}(T),
	\end{equation}
	where $C_\sigma$ depends only on $\sigma, k$ and $A$, and $\Theta_{k,\sigma}$ is defined by
	\begin{equation}\label{def_Theta}
	\Theta_{k,\sigma}(T) = \int_1^T \inf_{1\le L\le t}
	\left\{ \rho_k (L, t) +\exp\left(-\frac{c\, t^2}{L^2} \right) \right\}
	\left(\frac{T}{t}\right)^\sigma dt.
	\end{equation}
\end{theorem}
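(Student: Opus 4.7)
The plan is to prove the two inequalities in turn. The first bound \eqref{ineq_dchiT_1} is essentially a Caccioppoli/Meyers statement for the massive operator $-\text{div}(A\nabla) + T^{-2}$, while the harder second bound \eqref{ineq_chiT_1} requires exploiting the almost-periodicity of $A$ quantitatively through iterated finite differences, which is where the functional $\rho_k$ (and the exponent $p$ tied to Meyers' estimate) enters.

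For \eqref{ineq_dchiT_1}, I would test \eqref{def_corrector} against $\eta^2(\chi_T - c)$ for a Lipschitz cutoff $\eta$ and a constant $c\in\R^m$. Using ellipticity and the fact that $|\nabla P_j^\beta|=1$, this yields
\[
\fint_{B(x,R)}|\nabla\chi_T|^2 + T^{-2}\fint_{B(x,R)}|\chi_T|^2 \le C\Big(R^{-2}\fint_{B(x,2R)}|\chi_T-c|^2 + 1\Big).
\]
Choosing $c$ to be the mean of $\chi_T$ over $B(x,2R)$ and iterating dyadically up to $R\sim T$ reduces the gradient estimate to a bound on $T^{-1}\|\chi_T\|_{S_T^2}$, which is in turn controlled by $T^\sigma$ using a crude a priori growth bound from the defining energy inequality.

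The harder bound \eqref{ineq_chiT_1} hinges on the identity that, for fixed $y,z\in\R^d$, the shift difference $v(x):=\chi_T(x+y)-\chi_T(x+z)$ satisfies
\[
-\text{div}\big(A(x+y)\nabla v\big) + T^{-2}v = \text{div}\Big((\Delta_{yz}A)(x)\,\nabla\big(\chi_T(\cdot+z)+P_j^\beta\big)\Big),
\]
so $v$ solves the same type of equation with source governed by $\Delta_{yz}A$. Iterating this identity $k$ times produces a $k$-fold difference $\Delta_P(\chi_T)$ whose $L^2$ norm on $B(x,R)$ is controlled by a sum, over partitions $P=Q_1\cup\cdots\cup Q_\ell$, of products $\|\Delta_{Q_1}A\|_{S^p_R}\cdots\|\Delta_{Q_\ell}A\|_{S^p_R}$; the exponent $p$ in \eqref{def_expt_p} arises precisely from the interplay between H\"older's inequality and Meyers' reverse H\"older estimate applied to $\nabla\chi_T$. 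Taking the suprema/infima in \eqref{def_rho_k} then collapses this sum into $\rho_k(L,R)$.

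To close \eqref{ineq_chiT_1}, I would run a multiscale iteration: at each scale $t\in[1,T]$, write the local mean of $\chi_T$ as a telescoping sum of shifts by vectors of length at most $L$, apply the preceding $k$-fold differencing estimate to control the fluctuations (contributing $\rho_k(L,t)$), and control the remaining piece via the Green's function of $-\text{div}(A\nabla)+T^{-2}$, whose decay at scale $T$ produces the exponentially small remainder $\exp(-ct^2/L^2)$. Integrating over $t$ with the scale weight $(T/t)^\sigma$ coming from the bootstrap of \eqref{ineq_dchiT_1} reconstructs exactly $\Theta_{k,\sigma}(T)$. The main obstacle, and the most technical point, is this multiscale bookkeeping: tracking the non-commutativity of iterated finite differences when $k\ge 2$, optimizing the cutoff $L$ against the mesoscopic scale $t$ at each level, and propagating the $T^\sigma$ factor through the induction without loss.
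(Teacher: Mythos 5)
First, a bookkeeping remark: the paper does not actually prove Theorem \ref{thm_chiT_1}; it is imported verbatim from \cite{ShenZhuge2} (``All of the following settings and results \dots were formulated in \cite{ShenZhuge2}''), so your sketch has to be measured against the argument in that earlier paper.

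The genuine gap is in your proof of (\ref{ineq_dchiT_1}). A Caccioppoli inequality controls $\fint_{B(x,R)}|\nabla\chi_T|^2$ by data on the \emph{larger} ball, so your dyadic iteration can only transfer the basic energy bound $\|\nabla\chi_T\|_{S^2_T}+T^{-1}\|\chi_T\|_{S^2_T}\le C$ from scale $T$ down to scale $1$ at the cost of a volume factor (giving roughly $T^{d/2}$), or, after invoking Meyers/De Giorgi--Nash, at the cost of $T^{\gamma_0}$ for some \emph{fixed} structural exponent $\gamma_0>0$ depending only on $d,m,\mu$; the alternative route of choosing $c$ to be a local mean and controlling it by telescoping already presupposes the decay $\fint_{B(x,t)}|\nabla\chi_T|^2\lesssim (T/t)^{2\sigma}$ and is circular. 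The content of (\ref{ineq_dchiT_1}) is precisely that the exponent can be taken to be an arbitrary $\sigma\in(0,1)$, and this is a homogenization statement, not a real-variable one: in \cite{ShenZhuge2} it is deduced from the large-scale uniform interior $C^{0,1-\sigma}$ estimate for $\cL_\e+\lambda$, proved by a compactness argument that uses the qualitative homogenization theorem (the present paper says exactly this at the beginning of Section 5.1). Your sketch injects no homogenization input at this stage, so it cannot reach $T^\sigma$. The same H\"{o}lder estimate is the source of the weight $(T/t)^\sigma$ in $\Theta_{k,\sigma}(T)$, so the gap propagates into your multiscale iteration for (\ref{ineq_chiT_1}).

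By contrast, your outline of the second estimate does identify the core mechanism of the Shen--Zhuge argument: the shifted difference $\chi_T(\cdot+y)-\chi_T(\cdot+z)$ solves an equation of the same type with source $\mathrm{div}\bigl((\Delta_{yz}A)\nabla(\chi_T(\cdot+z)+P_j^\beta)\bigr)$, and the $k$-fold iteration of this identity, combined with H\"{o}lder's inequality and Meyers' estimate, is exactly what produces the exponent $p$ in (\ref{def_expt_p}) and the partition sum defining $\rho_k$. One further inaccuracy worth noting: the factor $\exp(-ct^2/L^2)$ is not the decay of the Green's function of $-\mathrm{div}(A\nabla)+T^{-2}$, whose kernel decays at scale $T$ and would yield factors like $\exp(-c\,\mathrm{dist}/T)$; it is the Gaussian tail of a smoothing kernel at scale $L$ evaluated at distance $t$, arising in the Weyl-type averaging step that converts almost-periodicity of $A$ into smallness of local averages of $\chi_T$.
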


\begin{definition}
	For $1\le i,j\le d$ and $1\le \alpha,\beta \le m$, define
	\begin{equation}\label{def_bT}
	b^{\alpha\beta}_{T,ij}(y) = a^{\alpha\beta}_{ij}(y) + a^{\alpha\gamma}_{ik} \frac{\partial}{\partial y_k} \big( \chi^{\gamma\beta}_{T,j}(y)\big) - \widehat{a}^{\alpha\beta}_{ij}.
	\end{equation}
	We call $\phi^{\alpha\beta}_{T,ij} \in H^2_{\text{loc}}(\R^d)$ the dual approximate correctors if they are the solutions of
	\begin{equation}\label{def_phiT}
	-\Delta \phi^{\alpha\beta}_{T,ij} + T^{-2} \phi^{\alpha\beta}_{T,ij} = b^{\alpha\beta}_{T,ij} - \ag{b^{\alpha\beta}_{T,ij}},
	\end{equation}
	given by \cite[Lemma 3.1]{ShenZhuge2}.
\end{definition}

\begin{theorem}\label{thm_dualcor}
	Suppose that $A\in APW^2(\R^d)$ satisfies the ellipticity condition (\ref{def_ellipticity}). Then, for any $\sigma\in (0,1), k\ge 1$ and $T\ge 2$,
	\begin{align}\label{ineq_phiT_B1}
	\begin{aligned}
	\norm{T^{-1} \phi_T}_{S_1^2} + \norm{\nabla \phi_T}_{S_1^2} + \norm{ T \nabla \frac{ \partial }{\partial x_i} \phi_{T,i\cdot}}_{S_1^2} \le C_\sigma \Theta_{k,\sigma}(T),
	\end{aligned}
	\end{align}
	where the constant $C_\sigma$ depends only on $\sigma,k$ and $A$.
\end{theorem}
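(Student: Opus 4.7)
The plan rests on two ingredients. First, from the corrector equation (\ref{def_corrector}), written componentwise as
\begin{equation*}
-\partial_i\bigl(a_{ik}^{\alpha\gamma}\partial_k\chi_{T,j}^{\gamma\beta}\bigr) + T^{-2}\chi_{T,j}^{\alpha\beta} = \partial_i a_{ij}^{\alpha\beta},
\end{equation*}
and using that $\widehat{a}_{ij}^{\alpha\beta}$ is constant, I would extract the divergence identity $\partial_i b_{T,ij}^{\alpha\beta} = T^{-2}\chi_{T,j}^{\alpha\beta}$. Second, $b_T = A + A\nabla\chi_T - \widehat{A}$ inherits quantitative almost-periodicity from $A$, since multi-fold differences distribute across the product $A\nabla\chi_T$ and give rise to the partition sum in the definition (\ref{def_rho_k}) of $\rho_k$.

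For the first two terms in (\ref{ineq_phiT_B1}), I would adapt the proof of (\ref{ineq_chiT_1}) in Theorem \ref{thm_chiT_1} to the simpler scalar equation (\ref{def_phiT}). First check $\norm{b_T}_{S_1^p} \le C(1 + \norm{\nabla\chi_T}_{S_1^p})$ and propagate the $\rho_k$-type bookkeeping from $A$ to $b_T$. Then, exploiting the mean-zero source $b_T - \ag{b_T}$ and the exponential tail of the Bessel kernel for $(-\Delta + T^{-2})^{-1}$ at scale $t$, split the contribution of each scale $t\in[1,T]$ into an almost-periodic part controlled by $\rho_k(L,t)$ and a tail part controlled by $\exp(-ct^2/L^2)$. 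Integrating against the weight $(T/t)^\sigma$ reproduces the quantity $\Theta_{k,\sigma}(T)$ from (\ref{def_Theta}) and yields
\begin{equation*}
\norm{T^{-1}\phi_T}_{S_1^2} + \norm{\nabla\phi_T}_{S_1^2} \le C_\sigma \Theta_{k,\sigma}(T).
\end{equation*}

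The third term follows almost immediately from the divergence identity. Applying $\partial_i$ to (\ref{def_phiT}) and using $\partial_i\ag{b_{T,ij}^{\alpha\beta}} = 0$, I find that $\eta_j^{\alpha\beta} := \partial_i\phi_{T,ij}^{\alpha\beta}$ satisfies
\begin{equation*}
-\Delta\eta_j^{\alpha\beta} + T^{-2}\eta_j^{\alpha\beta} = T^{-2}\chi_{T,j}^{\alpha\beta}.
\end{equation*}
Standard $S_1^2$ estimates for the resolvent of $-\Delta + T^{-2}$ give $\norm{\nabla\eta_j}_{S_1^2} \le CT\cdot T^{-2}\norm{\chi_{T,j}}_{S_1^2} = CT^{-1}\norm{\chi_{T,j}}_{S_1^2}$. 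Combined with (\ref{ineq_chiT_1}), this yields $\norm{T\nabla\partial_i\phi_{T,i\cdot}}_{S_1^2} \le C_\sigma \Theta_{k,\sigma}(T)$, completing the bound.

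The hard part will be the middle paragraph: transferring the quantitative almost-periodicity of $A$ to $b_T$ while preserving the $\Theta_{k,\sigma}$ form. The subtlety is that $\nabla\chi_T$ is only controlled by $C_\sigma T^\sigma$ in $S_1^2$, so when multi-fold differences $\Delta_{y_1 z_1}\cdots\Delta_{y_k z_k}$ are distributed across the product $A\nabla\chi_T$, each resulting term must be matched against the partition sum in (\ref{def_rho_k}) and its $T^\sigma$ growth absorbed by the weight $(T/t)^\sigma$ in (\ref{def_Theta}). Once this bookkeeping is in place, the remainder parallels the proof for $\chi_T$ carried out in \cite{ShenZhuge2}.
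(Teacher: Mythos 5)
This theorem is imported verbatim from \cite{ShenZhuge2} and the present paper contains no proof of it, so the only meaningful comparison is with the method of that reference, which your sketch reconstructs faithfully: the divergence identity $\partial_i b_{T,ij}^{\alpha\beta}=T^{-2}\chi_{T,j}^{\alpha\beta}$, the resulting equation $-\Delta(\partial_i\phi_{T,ij}^{\alpha\beta})+T^{-2}\partial_i\phi_{T,ij}^{\alpha\beta}=T^{-2}\chi_{T,j}^{\alpha\beta}$ combined with the bound $\norm{\nabla G_T}_{L^1}\sim T$ for the Bessel kernel, and the transfer of the $\rho_k$ bookkeeping from $A$ to $b_T=A+A\nabla\chi_T-\widehat{A}$ are precisely the ingredients used there. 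Your plan is sound, with the honest caveat (which you yourself flag) that the middle step --- propagating quantitative almost-periodicity through the product $A\nabla\chi_T$ and through the corrector equation for $\Delta_{yz}\chi_T$, with the $T^\sigma$ growth absorbed by the weight $(T/t)^\sigma$ --- carries essentially all the technical weight and is only described, not executed.
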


Also throughout this paper we define
\begin{equation}\label{def_omega}
\omega_{k,\sigma}(\e) = \norm{\psi-\nabla \chi_T}_{B^2} + \norm{\psi^*-\nabla \chi_T^*}_{B^2} + T^{-1}\Theta_{k,\sigma}(T), \qquad \e = T^{-1},
\end{equation}
where $\psi^*$ and $\chi_T^*$ are the auxiliary functions and approximate correctors for the family of operators $\{\cL_\e^* \}$. We are willing to emphasize that the quantity $\omega_{k,\sigma}(\e)$ plays an important role in this paper since it perfectly characterizes the almost-periodicity of coefficients $A$, in the sense of H. Weyl, and quantifies the rate of convergence; see Theorem \ref{thm_L2_C11} and \ref{thm_H1_Lip}. Actually, it is shown in \cite{ShenZhuge2} that if $A$ satisfies the ellipticity condition (\ref{def_ellipticity}) and $A\in APW^2(\R^d)$, then $\omega_{k,\sigma}(\e) \to 0$ as $\e \to 0$. In particular, the following lemma gives the explicit control on $\omega_{k,\sigma}(\e)$ in terms of $\rho_k$, which quantifies the almost-periodicity of $A$.
\begin{lemma}\label{lem_rate}
	Suppose that $A\in APW^2(\R^d)$ satisfies the ellipticity condition (\ref{def_ellipticity}). Then
	\begin{equation}\label{ineq_omg}
	\omega_{k,\sigma}(\e) \le C\int_{T}^{\infty} \frac{\Theta_{k,\sigma}(t)}{t^2} dt + CT^{-1}\Theta_{k,\sigma}(T),
	\end{equation}
	provided the integral on the right-side is bounded, where $T = \e^{-1}$. In particular,
	
	(i) if $\rho_k(L,L) \le C L^{-1-\alpha}$ for some $\alpha>0,k\ge 1$, then
	\begin{equation}
	\omega_{k,\sigma'}(\e) = O(\e),
	\end{equation}  for some $\sigma'$ that depends only on $\alpha$;
	
	(ii) if $\rho_k(L,L) \le C L^{-\alpha}$ for some $0<\alpha \le 1,k\ge 1$, then
	\begin{equation}
	\omega_{k,\sigma'}(\e) = O(\e^\beta),
	\end{equation}  for all $\beta<\alpha$ and some $\sigma'$ that depends only on $\alpha,\beta$;
	
	(iii) if $\rho_k(L,L) \le C \ln(1+L)^{-\alpha}$ for some $\alpha > 1, k\ge 1$, then
	\begin{equation}
	\omega_{k,\sigma}(\e) = O((-\ln \e)^{1-\alpha}).
	\end{equation}
\end{lemma}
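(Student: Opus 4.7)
The plan has two components. First, I would establish the master inequality (\ref{ineq_omg}), which reduces to quantifying the rate at which $\nabla\chi_T \to \psi$ in the $B^2$ semi-norm. Second, I would specialise the master bound to the three decay regimes (i)--(iii) by an optimal choice of $L = L(t)$ inside the definition (\ref{def_Theta}) of $\Theta_{k,\sigma}$.

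For the master bound, note that $CT^{-1}\Theta_{k,\sigma}(T)$ on the right-hand side of (\ref{ineq_omg}) is already one of the three summands in the definition (\ref{def_omega}) of $\omega_{k,\sigma}$, so nothing has to be proved for it. It therefore suffices to control $\norm{\psi - \nabla\chi_T}_{B^2}$ (and, by an identical argument applied to $\cL_\e^*$, $\norm{\psi^* - \nabla\chi_T^*}_{B^2}$) by $\int_T^\infty \Theta_{k,\sigma}(t)/t^2\,dt$. The natural device is to differentiate the corrector equation (\ref{def_corrector}) with respect to the parameter $T$: setting $v_T := \partial\chi_T/\partial T$, the product rule gives
\begin{equation*}
-\text{div}(A\nabla v_T) + T^{-2}v_T = 2T^{-3}\chi_T.
\end{equation*}
A standard energy estimate (testing against $v_T$ and using Young's inequality to absorb the coercive term) yields $\norm{\nabla v_T}_{B^2} \le CT^{-2}\norm{\chi_T}_{B^2}$, which by Theorem \ref{thm_chiT_1} is at most $CT^{-2}\Theta_{k,\sigma}(T)$. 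Since $\nabla\chi_S \to \psi$ in the $B^2$ semi-norm by (\ref{ineq_chiT2psi}), integrating in $S$ produces
\begin{equation*}
\norm{\psi - \nabla\chi_T}_{B^2} \le \int_T^\infty \norm{\nabla v_S}_{B^2}\,dS \le C\int_T^\infty \frac{\Theta_{k,\sigma}(S)}{S^2}\,dS,
\end{equation*}
whenever this tail integral converges. Combined with the $\cL_\e^*$ counterpart, this gives (\ref{ineq_omg}).

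For parts (i)--(iii), I would substitute the given decay of $\rho_k$ into (\ref{def_Theta}) and choose $L = t^\beta$ for some $\beta \in (0,1)$. This makes the Gaussian term $e^{-ct^2/L^2} = e^{-ct^{2-2\beta}}$ super-polynomially small, while $\rho_k(L,t)$ inherits the hypothesised decay of $\rho_k(L,L)$ with $L$ replaced by $t^\beta$. In case (i), picking $\beta$ slightly larger than $1/(1+\alpha)$ gives $\inf\{\,\cdot\,\} \lesssim t^{-1-\delta}$ for some $\delta>0$; a direct computation then shows $\Theta_{k,\sigma'}(T) \lesssim T^{\sigma'}$, so both the tail integral from Step~1 and $T^{-1}\Theta_{k,\sigma'}(T)$ scale like $T^{\sigma'-1} = \e^{1-\sigma'}$, which for $\sigma' = \sigma'(\alpha)$ chosen appropriately small yields the stated rate. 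Case (ii) is structurally identical with $t^{-\alpha\beta}$ in place of $t^{-1-\delta}$, producing $O(\e^\beta)$ for every $\beta < \alpha$. Case (iii) uses $\inf\{\,\cdot\,\} \lesssim (\log t)^{-\alpha}$; integration by parts gives $\Theta_{k,\sigma}(T) \lesssim T(\log T)^{-\alpha}$, and the tail integral $\int_T^\infty S^{-1}(\log S)^{-\alpha}\,dS$ converges (thanks to $\alpha>1$) to $\frac{1}{\alpha-1}(\log T)^{1-\alpha}$, which is precisely the claimed logarithmic rate.

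The main technical obstacle is the energy estimate for $v_T$ in the $B^2$ semi-norm: since $B^2$ is defined via a limsup of ball averages, one cannot test directly against $v_T$ on $\R^d$. The cleanest route is to carry out the estimate in the spatially uniform $S_R^p$ norms of \cite{ShenZhuge2} and transfer the conclusion to $B^2$ via the almost-periodic structure; alternatively, one inserts large-ball cutoffs and checks that the boundary contributions vanish in the limsup. A secondary subtlety appears in case (i): the rate $O(\e)$ is only extracted in the limit $\sigma' \to 0$, so the coordination between $\sigma'$ and the exponent $\beta$ (through the relation $\sigma' < \beta(1+\alpha) - 1$) has to be tracked carefully so that the implicit constants remain finite and $\sigma'$ is legitimately a function of $\alpha$ alone.
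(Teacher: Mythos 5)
Your derivation of the master inequality (\ref{ineq_omg}) is essentially the paper's argument in continuous form. The paper telescopes dyadically instead of differentiating in $T$: setting $v=\chi_{2^{j-1}T}-\chi_{2^{j}T}$, subtraction of the two corrector equations gives $-\mathrm{div}(A\nabla v)+(2^{j-1}T)^{-2}v=-3(2^{j}T)^{-2}\chi_{2^{j}T}$, and the resolvent estimate of \cite[Lemma 3.1]{ShenZhuge2} in the spatially uniform norms yields $\norm{\nabla v}_{S^2_{2^{j-1}T}}\le C(2^{j}T)^{-1}\norm{\chi_{2^{j}T}}_{S^2_{2^{j-1}T}}\le C(2^jT)^{-1}\Theta_{k,\sigma}(2^jT)$; summing over $j$ and comparing with the integral gives the tail bound. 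This discrete route disposes of both obstacles you flag at the end: no differentiability of $T\mapsto\chi_T$ has to be justified, and the energy estimate is performed in $S^2_R$ (which dominates $B^2$) rather than in $B^2$ directly. Parts (ii) and (iii) are handled exactly as you propose (the paper writes out (ii) with $L=t^{\delta}$ and declares (iii) similar), so up to this point the proposal is sound.

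The genuine gap is part (i). Your computation gives $\Theta_{k,\sigma'}(T)\lesssim T^{\sigma'}$ and hence $\omega_{k,\sigma'}(\e)\lesssim \e^{1-\sigma'}$, and no "appropriately small'' fixed $\sigma'>0$ turns $\e^{1-\sigma'}$ into $O(\e)$; your own remark that the rate is "only extracted in the limit $\sigma'\to 0$'' concedes that the statement, which requires a fixed $\sigma'$, is not proved. Moreover this is not a fixable inefficiency of your argument: from (\ref{def_Theta}) the integrand is bounded below on $t\in[1,2]$ by the Gaussian term, so $\Theta_{k,\sigma'}(T)\ge c\,T^{\sigma'}$ and the third summand $T^{-1}\Theta_{k,\sigma'}(T)$ in (\ref{def_omega}) is always at least $c\,\e^{1-\sigma'}$, so the master bound (\ref{ineq_omg}) can never deliver $O(\e)$. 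The paper does not attempt to deduce (i) from (\ref{ineq_omg}); it explicitly cites \cite{ShenZhuge2}, where the regime $\rho_k(L,L)\lesssim L^{-1-\alpha}$ is treated by a separate, sharper argument (tied to the uniform boundedness of $\chi_T$ and the existence of a true corrector). You should either invoke that result as the paper does or reproduce its proof; part (i) cannot be obtained by specializing your Step 1.
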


\begin{proof}
	In view of (\ref{def_omega}), to see (\ref{ineq_omg}), it suffices to show
	\begin{equation}
	\norm{\psi-\nabla \chi_T}_{B^2} \le C\int_{T}^{\infty} \frac{\Theta_{k,\sigma}(t)}{t^2} dt.
	\end{equation}
	Observe that (\ref{ineq_chiT2psi}) implies
	\begin{equation*}
	\norm{\psi-\nabla \chi_T}_{B^2} \le \sum_{j=1}^\infty \norm{\nabla \chi_{2^{j-1} T}-\nabla \chi_{2^{j} T}}_{B^2}.
	\end{equation*}
	Let $v = \chi_{2^{j-1} T}- \chi_{2^{j} T}$. It follows from the definition of $\chi_T$ that $v$ satisfies
	\begin{equation*}
	-\text{div} (A \nabla v) + (2^{j-1}T)^{-2} v = -3 (2^{j}T)^{-2} \chi_{2^j T}.
	\end{equation*}
	Therefore, a standard estimate in \cite[Lemma 3.1]{ShenZhuge2} claims that
	\begin{equation*}
	\norm{\nabla v}_{S_{2^{j-1}T}^2} = \norm{\nabla \chi_{2^j T}-\nabla \chi_{2^{j-1} T}}_{S_{2^{j-1} T}^2} \le C (2^{j}T)^{-1}\norm{\chi_{2^jT}}_{S_{2^{j-1} T}^2}.
	\end{equation*}
	Hence, it follows from (\ref{ineq_chiT_1}) that
	\begin{align*}
	\norm{\psi-\nabla \chi_T}_{B^2} &\le C\sum_{j=1}^\infty (2^j T)^{-1} \norm{\chi_{2^jT}}_{S_{2^{j-1} T}^2} \\
	&\le C\sum_{j=1}^\infty (2^j T)^{-1} \Theta_{k,\sigma}(2^j T) \\
	&\le C \sum_{j=1}^\infty \int_{2^{j-1}T}^{2^j T} \frac{\Theta_{k,\sigma}(t)}{t^2} dt \\
	&= C\int_{T}^{\infty} \frac{\Theta_{k,\sigma}(t)}{t^2} dt.
	\end{align*}
	The estimate for $\norm{\psi^*-\nabla \chi_T^*}_{B^2}$ is exactly the same.
	
	Now we recall that (i) was actually shown in \cite{ShenZhuge2} and we skip the proof here. Parts (ii) and (iii) are direct corollaries of (\ref{ineq_omg}). Indeed, for (ii), if $\rho_k(L,L) \le C L^{-\alpha}$ for some $0<\alpha \le 1$, then
	\begin{align*}
	\inf_{1\le L\le t}
	\left\{ \rho_k (L, t) +\exp\left(-\frac{c\, t^2}{L^2} \right) \right\} &\le \rho_k (t^{\delta}, t) +\exp\left(-t^{2(1-\delta)} \right) \\
	& \le C t^{-\delta \alpha}
	\end{align*}
	for any $0<\delta<1$ and $C$ depends also on $\delta$ and $\alpha$. Choosing $\sigma$ appropriately such that $\delta \alpha + \sigma\neq 1$, we obtain
	\begin{equation}
	\Theta_{k,\sigma}(T) \le C T^\sigma \int_1^T t^{-\delta \alpha - \sigma} dt \le C T^{1-\delta\alpha}.
	\end{equation}
	As a result,
	\begin{equation}
	\omega_{k,\sigma}(\e) \le C \int_T^\infty \frac{t^{1-\delta \alpha}}{t^2} dt + C T^{-1} T^{1-\delta \alpha} \le C T^{-\delta\alpha} = C\e^\beta,
	\end{equation}
	where $\beta = \delta\alpha$ could be any number less than $\alpha$ since $0<\delta<1$ is arbitrary. The estimate for (iii) is similar.
\end{proof}

\begin{remark}
	we point out here that part (ii) in the lemma above will not be used in this paper. Part (i) will be involved in the Rellich estimate in the last section. And part (iii) provides a sufficient condition on the coefficients for the Dini-type condition (\ref{ineq_Dini_Intro}). Actually, it is not hard to see that if $\rho_k(L,L) \le C \ln(1+L)^{-\alpha}$ for some $k\ge 1$ and $\alpha > 3$, then (\ref{ineq_Dini_Intro}) is satisfied.
	Also, we should mention that these estimates of decay for $\rho_k$ hold for any periodic coefficients or sufficiently smooth quasi-periodic coefficients (see \cite{AGK} for example). This means that all the work in this paper generalizes the results in periodic homogenization and is really applicable to non-periodic homogenization.
\end{remark}

\subsection{A framework for convergence rates}
In this subsection, we will introduce a framework for obtaining rates of convergence in $L^2$ space. This framework was formulated in \cite{ShenZhuge1} for mixed boundary value problems with periodic coefficients, which was motivated by earlier work in \cite{GG,Su1,Su2}. The advantage of this framework is that we can handle homogenization problems with different boundary conditions and non-periodic coefficients in a more efficient uniform fashion. To see this, we first introduce some notations and lemmas. Let $\zeta \in C_0^\infty(B_1(0))$ be a cut off function with $\int \zeta = 1$ and $\zeta_\e(x) = \e^{-d} \zeta(x/\e) $. Define the smoothing operator
\begin{equation}
S_\e f(x) = \zeta_\e* f(x) = \int_{\R^d} \zeta_\e(y) f(x-y) dy.
\end{equation}
Clearly, for $1\le p\le \infty$,
\begin{equation}\label{ineq_S_p}
\norm{S_{\e} f}_{L^p(\Omega)} \le \norm{f}_{L^p(\Omega)}.
\end{equation}

Let $\delta > 2\e$ be a small parameter to be determined. Let $\eta_\delta \in C_0^\infty(\Omega)$ be a cut-off function so that $\eta_\delta(x) =0 $ in $\Omega_\delta = \{x\in\Omega; \text{dist}(x,\partial\Omega) < \delta\}$, $\eta_\delta(x) = 1$ in $\Omega \setminus \Omega_{2\delta}$ and $|\nabla \eta_\delta(x)| \le C\delta^{-1}$. Then define the so-called localized smoothing operator as
\begin{equation}
K_{\e,\delta} f(x) = S_\e (\eta_\delta f)(x).
\end{equation}
Note that $K_{\e,\delta} f \in C_0^\infty(\Omega)$ since $\delta > 2\e$.

Lemma \ref{lem_gxe_B1}-\ref{lem_Oeu_g} are standard and their proofs may also be found in \cite{ShenZhuge1}.
\begin{lemma}\label{lem_gxe_B1}
	Let $f\in L^p(\R^d)$ for some $1\le p< \infty$. Then for any $g\in L^p_{\text{loc}}(\R^d)$,
	\begin{equation}
	\norm{g(\cdot/\e)S_\e(f)}_{L^p(\R^d)}\le \norm{g}_{S_1^p} \norm{f}_{L^p(\R^d)},
	\end{equation}
	where $C$ depends only on $d$.
\end{lemma}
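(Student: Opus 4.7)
The plan is to bound $|S_\e f(x)|^p$ pointwise by a local $L^p$ average of $f$, and then exchange the order of integration so that the $g$-factor gets integrated over a small ball, where the $S_1^p$-norm takes over after rescaling.

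\textbf{Step 1 (pointwise bound on $S_\e f$).} Since $\zeta\in C_0^\infty(B(0,1))$ with $\int\zeta = 1$, the kernel $\zeta_\e$ is supported in $B(0,\e)$ with $\|\zeta_\e\|_\infty \le C\e^{-d}$. Thus
\begin{equation*}
|S_\e f(x)| \le C\e^{-d}\int_{B(x,\e)} |f(z)|\,dz = C\fint_{B(x,\e)}|f|.
\end{equation*}
Raising to the $p$-th power and applying Jensen's inequality yields
\begin{equation*}
|S_\e f(x)|^p \le C\fint_{B(x,\e)}|f|^p = C\e^{-d}\int_{B(x,\e)} |f(z)|^p\,dz.
\end{equation*}

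\textbf{Step 2 (Fubini).} Multiply by $|g(x/\e)|^p$, integrate over $\R^d$, and swap the order of integration:
\begin{align*}
\int_{\R^d} |g(x/\e)|^p |S_\e f(x)|^p\,dx &\le C\e^{-d}\int_{\R^d}|f(z)|^p \int_{|x-z|<\e} |g(x/\e)|^p\,dx\,dz.
\end{align*}

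\textbf{Step 3 (rescale the inner integral).} Change variables $w = x/\e$ in the inner integral:
\begin{equation*}
\int_{|x-z|<\e}|g(x/\e)|^p\,dx = \e^d\int_{B(z/\e,1)}|g(w)|^p\,dw \le \e^d |B(0,1)|\,\|g\|_{S_1^p}^p,
\end{equation*}
by the very definition \eqref{def_Sp} of the $S_1^p$ seminorm.

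\textbf{Step 4 (combine).} Substituting back, the $\e^{-d}$ and $\e^d$ cancel, producing
\begin{equation*}
\int_{\R^d}|g(x/\e)|^p|S_\e f(x)|^p\,dx \le C\|g\|_{S_1^p}^p \|f\|_{L^p(\R^d)}^p,
\end{equation*}
and extracting the $p$-th root gives the assertion.

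There is no real obstacle here; the only thing to be careful about is tracking the constants $C$ (depending only on $d$ and $\zeta$) through the pointwise bound and the volume factor after rescaling. The argument is essentially a one-line computation once one realizes that $S_\e f(x)$ is morally a local average on the $\e$-scale, which is precisely the scale on which $g(\cdot/\e)$ averages to $\|g\|_{S_1^p}$.
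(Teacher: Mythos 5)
Your proof is correct and is precisely the standard argument the paper alludes to when it says these lemmas are standard and refers to [ShenZhuge1]: pointwise domination of $S_\e f$ by the local average, Jensen, Tonelli, and rescaling of the inner integral to invoke the $S_1^p$ seminorm. The only cosmetic point is that the lemma as stated omits the constant $C$ on the right-hand side (evidently a typo, given the trailing ``where $C$ depends only on $d$''), and your argument correctly produces such a constant depending on $d$ and $\zeta$.
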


\begin{lemma}\label{lem_S_e}
	Let $f\in W^{1,p}(\R^d)$ for some $1\le p<\infty$. Then
	\begin{equation*}
	\norm{S_\e(f) - f}_{L^p(\R^d)} \le C \e \norm{\nabla f}_{L^p(\R^d)},
	\end{equation*}
	where $C$ depends only on $d$.
\end{lemma}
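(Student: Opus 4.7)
The plan is to prove this standard mollification estimate by the usual translation/difference-quotient argument, first for smooth functions and then by density, noting crucially that the estimate is non-trivial only because $\zeta_\varepsilon$ is supported in $B_\varepsilon$, so the ``moment'' $\int |y| \zeta_\varepsilon(y)\,dy$ is bounded by $\varepsilon$.

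First I would assume $f\in C_0^\infty(\R^d)$ and write
\begin{equation*}
S_\e(f)(x) - f(x) = \int_{\R^d} \zeta_\e(y)\bigl(f(x-y) - f(x)\bigr)\,dy,
\end{equation*}
using the normalization $\int \zeta_\e = 1$. The fundamental theorem of calculus gives $f(x-y) - f(x) = -\int_0^1 y\cdot \nabla f(x - ty)\,dt$, hence
\begin{equation*}
S_\e(f)(x) - f(x) = -\int_0^1 \int_{\R^d} \zeta_\e(y)\,y\cdot \nabla f(x-ty)\,dy\,dt.
\end{equation*}

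Next, I would apply Minkowski's integral inequality in $L^p(\R^d)$ to the double integral in $(y,t)$ and use translation invariance of the $L^p$ norm, which yields
\begin{equation*}
\norm{S_\e(f) - f}_{L^p(\R^d)} \le \int_0^1 \int_{\R^d} \zeta_\e(y)\,|y|\,\norm{\nabla f(\cdot - ty)}_{L^p(\R^d)}\,dy\,dt \le \norm{\nabla f}_{L^p(\R^d)} \int_{\R^d} \zeta_\e(y)\,|y|\,dy.
\end{equation*}
Since $\zeta_\e$ is supported in $B(0,\e)$ and has total mass one, $\int \zeta_\e(y)|y|\,dy \le \e$, giving the desired bound with $C = 1$ in the smooth case.

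Finally, I would extend to general $f\in W^{1,p}(\R^d)$ by density of $C_0^\infty(\R^d)$ in $W^{1,p}(\R^d)$ for $1\le p<\infty$: choose $f_n \in C_0^\infty(\R^d)$ with $f_n \to f$ in $W^{1,p}$, apply the estimate to each $f_n$, and pass to the limit using the continuity of convolution with $\zeta_\e$ on $L^p$ (which follows from Young's inequality, or equivalently from the already-used \eqref{ineq_S_p}). No step is a real obstacle here; the only point worth noting is that one must be careful to use Minkowski's inequality rather than trying to bound $|S_\e(f)-f|$ pointwise, since that would require an $L^\infty$ control of $\nabla f$ that is not available.
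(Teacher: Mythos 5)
Your proof is correct. The paper does not supply its own argument for this lemma (it labels Lemmas \ref{lem_gxe_B1}--\ref{lem_Oeu_g} as standard and cites \cite{ShenZhuge1}), and your translation/fundamental-theorem-of-calculus argument with Minkowski's integral inequality, followed by density of $C_0^\infty$ in $W^{1,p}$ for $p<\infty$, is exactly the standard proof being referenced; the only implicit point is that $\zeta\ge 0$ (as is customary for a cut-off with $\int\zeta=1$), and even without it one simply picks up the fixed constant $\int|\zeta|$.
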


\begin{lemma}\label{lem_Oeu_g}
	Let $\Omega$ be a bounded Lipschitz domain, then for any $u\in H^1(\R^d)$ and any $r\ge \e$,
	\begin{equation*}
	\int_{\Omega_{r}} |g(\cdot /\e) S_\e(u)|^2 \le C r \norm{g}_{S_1^2}^2 \norm{u}_{H^1(\R^d)} \norm{u}_{L^2(\R^d)},
	\end{equation*}
	where $\Omega_r = \{x\in \Omega: \text{dist}(x,\partial\Omega) < r\}$ and the constant $C$ depends only on the domain $\Omega$.	
\end{lemma}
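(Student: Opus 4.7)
The plan is to decouple $g(\cdot/\e)$ from $S_\e(u)$ on the microscopic scale $\e$ and then reduce to a trace--interpolation bound for $u$ on a boundary layer $\Omega_{Cr}$. The hypothesis $r\ge\e$ will let me absorb an $\e$-thickening of $\Omega_r$ without losing the linear factor $r$.

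First I would tile $\R^d$ by closed cubes $\{Q_k\}_{k\in\mathbb{Z}^d}$ of side $\e$ with disjoint interiors. On any such cube $Q$, the change of variables $y=x/\e$ together with (\ref{def_Sp}) gives
\begin{equation*}
\int_Q |g(x/\e)|^2\, dx = \e^d \int_{Q/\e} |g|^2 \le C\e^d \norm{g}_{S_1^2}^2,
\end{equation*}
while Cauchy--Schwarz applied to the convolution $S_\e u = \zeta_\e \ast u$ gives the pointwise bound $|S_\e u(x)|^2 \le C\e^{-d} \int_{B(x,\e)} |u|^2$, whence $\norm{S_\e u}_{L^\infty(Q_k)}^2 \le C\e^{-d} \int_{\widetilde{Q_k}} |u|^2$ for a fixed enlargement $\widetilde{Q_k}$ of $Q_k$. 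Multiplying these two estimates and summing over the cubes that meet $\Omega_r$, the powers of $\e$ cancel, the bounded overlap of the $\widetilde{Q_k}$ costs only a constant, and the assumption $r\ge\e$ forces $\bigcup_k \widetilde{Q_k} \subset \Omega_{r+C\e} \subset \Omega_{Cr}$. The outcome is
\begin{equation*}
\int_{\Omega_r} |g(\cdot/\e)\, S_\e u|^2 \le C\norm{g}_{S_1^2}^2 \int_{\Omega_{Cr}} |u|^2.
\end{equation*}

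The second step is the trace--interpolation bound $\int_{\Omega_s} |u|^2 \le Cs\,\norm{u}_{L^2(\R^d)}\norm{u}_{H^1(\R^d)}$ valid for every $s>0$ and every $u\in H^1(\R^d)$. A finite boundary atlas and partition of unity reduce the task to the model case $\Omega\cap U = \{x_d > \phi(x')\}\cap U$ for a Lipschitz $\phi$; the bi-Lipschitz flattening $(x',x_d)\mapsto(x',x_d-\phi(x'))$ then maps $\Omega_s\cap U$ into a subset of the flat strip $\{0 < x_d < Cs\}$. On that strip I would apply Fubini, the translation invariance of the trace $\norm{u(\cdot,t)}_{L^2(\R^{d-1})}^2 \le C\norm{u}_{H^{1/2}(\R^d)}^2$, and the standard interpolation $\norm{u}_{H^{1/2}(\R^d)}^2 \le C\norm{u}_{L^2(\R^d)}\norm{u}_{H^1(\R^d)}$; integrating in $t$ over $(0,Cs)$ produces the missing factor $s$. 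Inserting this with $s=Cr$ into the previous display yields the conclusion.

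The main technical point is to reach $\int_{\Omega_{Cr}}|u|^2$ \emph{before} invoking any norm on $u$: a premature use of the Young-type estimate $\norm{g(\cdot/\e)S_\e u}_{L^2(\R^d)}\le \norm{g}_{S_1^2}\norm{u}_{L^2(\R^d)}$ from Lemma \ref{lem_gxe_B1} would yield $\norm{u}_{L^2}^2$ instead of the sharper $\norm{u}_{H^1}\norm{u}_{L^2}$ and destroy the factor $r$. Uniformity of the trace constant over $t\in(0,Cs)$ is secured by the finiteness of the atlas and by translation invariance in the flat strip.
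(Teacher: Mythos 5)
The paper itself omits the proof of this lemma (it defers to the cited reference), and your argument --- decoupling $g(\cdot/\e)$ from $S_\e(u)$ on an $\e$-cube tiling and then invoking the boundary-layer bound $\int_{\Omega_s}|u|^2\le Cs\,\norm{u}_{L^2(\R^d)}\norm{u}_{H^1(\R^d)}$ --- is the standard proof and is correct. One small correction: the enlarged cubes $\widetilde{Q_k}$ need not stay inside $\Omega$, so their union lies in the two-sided collar $\{x\in\R^d:\mathrm{dist}(x,\partial\Omega)<Cr\}$ rather than in $\Omega_{Cr}$; since $u\in H^1(\R^d)$, your flattening and trace--interpolation step applies verbatim to that collar (use the strip $\{|x_d|<Cs\}$ instead of $\{0<x_d<Cs\}$).
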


As we know, the $H^1$ estimate, i.e., the error estimate of the first order approximation, is usually the first step to establish the $L^2$ estimates of $u_\e - u_0$. We introduce our modified first order approximation, which is defined as follows:
\begin{equation}\label{def_1st-appr}
w_\e = u_\e - u_0 -\e \chi_{T,k}^\beta(x/\e) K_{\e,\delta} \left(\frac{\partial u_0^\beta}{\partial x_k}\right),
\end{equation}
where $u_\e, u_0$ are the weak solutions associated with $\cL_\e$ and $\cL_0$, respectively. The operator $K_{\e,\delta}$ in the correction term has two effects: (i) thanks to Lemma \ref{lem_gxe_B1}, the uniform boundedness of approximate correctors $\chi_T$ is not necessary for $L^2$ estimate of the correction term; (ii) the presence of the cut-off function avoids extra effect of boundary correctors or boundary regularity.

Now we state the theorem of $L^2$ convergence rate in $C^{1,1}$ domains as follows:
\begin{theorem}\label{thm_L2_C11}
	Suppose that $\Omega$ is a bounded $C^{1,1}$ domain and $A\in APW^2(\R^d)$ satisfies the ellipticity condition (\ref{def_ellipticity}). Let $u_\e$ be the weak solution  of (\ref{def_DP}) or (\ref{def_NP}) and $u_0\in H^2(\Omega)$ be the weak solution of homogenized system with the same boundary data. Then
	\begin{equation}\label{ineq_L2DN_C11}
	\Norm{u_\e - u_0}_{L^2(\Omega)} \le C\omega_{k,\sigma}(\e) \Big\{ \norm{\nabla^2 u_0}_{L^2(\Omega)} + (1+\lambda \omega_{k,\sigma}(\e))^{1/2} \norm{\nabla u_0}_{L^2(\Omega)} \Big\},
	\end{equation}
	where $\omega_{k,\sigma}(\e)$ is defined in (\ref{def_omega}) and $C$ depends only on $\sigma,k, A$ and Lipschitz character of $\Omega$.
\end{theorem}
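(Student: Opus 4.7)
The strategy follows the three-stage framework sketched in Section 2: (i) derive an equation for the modified first-order approximation $w_\e$ defined in (\ref{def_1st-appr}); (ii) obtain an $H^1$-type bound on $w_\e$ of order $\omega_{k,\sigma}(\e)^{1/2}$; (iii) upgrade to an $L^2$-bound of order $\omega_{k,\sigma}(\e)$ via an Aubin--Nitsche duality. Throughout I take $T=\e^{-1}$ and a parameter $\delta>2\e$ eventually chosen of order $\omega_{k,\sigma}(\e)$. The correction term itself is controlled directly:
\begin{equation*}
\Norm{\e\chi_T(\cdot/\e)K_{\e,\delta}(\nabla u_0)}_{L^2(\Omega)} \le C\e\norm{\chi_T}_{S_1^2}\norm{\nabla u_0}_{L^2(\Omega)} \le C\omega_{k,\sigma}(\e)\norm{\nabla u_0}_{L^2(\Omega)},
\end{equation*}
by Lemma \ref{lem_gxe_B1} and Theorem \ref{thm_chiT_1}, so by the triangle inequality it suffices to bound $w_\e$ in $L^2$.

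Applying $\cL_\e+\lambda$ to (\ref{def_1st-appr}) and using (\ref{def_bT}) to rewrite the leading oscillating terms, the right-hand side consists of a main divergence $\text{div}\{b_T(\cdot/\e)K_{\e,\delta}(\nabla u_0)\}$, a commutator divergence $\text{div}\{(\widehat{A}-A(\cdot/\e)-A(\cdot/\e)\nabla\chi_T(\cdot/\e))(\nabla u_0 - K_{\e,\delta}(\nabla u_0))\}$, an $\e$-order derivative remainder $\e\cdot\text{div}(A(\cdot/\e)\chi_T(\cdot/\e)\nabla K_{\e,\delta}(\nabla u_0))$, and the lower-order term $-\lambda\e\chi_T(\cdot/\e)K_{\e,\delta}(\nabla u_0)$. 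The crucial structural device is the dual-corrector equation (\ref{def_phiT}): it lets one rewrite the leading $b_T(\cdot/\e)$-term after one integration by parts as $\e\nabla\phi_T(\cdot/\e)$ acting on two derivatives of $u_0$, plus an $\e T^{-1}\phi_T(\cdot/\e)$ tail, playing the role of the antisymmetric flux corrector from periodic theory.

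For the $H^1$ estimate I would test against $w_\e$ itself: in the Dirichlet case $w_\e\in H_0^1(\Omega)$ since the correction is supported inside $\text{supp}(\eta_\delta)\Subset\Omega$, while in the Neumann case one subtracts the mean and controls the resulting boundary integral via the trace theorem together with the $H^2$-regularity of $u_0$. Each product on the right has the schematic form $g(\cdot/\e)\cdot(\text{smooth})$: Lemma \ref{lem_gxe_B1} bounds such products in the bulk by $\norm{g}_{S_1^2}$ times an $L^2$-norm, and Lemma \ref{lem_Oeu_g} absorbs the boundary-layer contributions concentrated in $\Omega_{2\delta}$. Inserting $\norm{\chi_T}_{S_1^2},\norm{T^{-1}\phi_T}_{S_1^2},\norm{\nabla\phi_T}_{S_1^2}\le C\Theta_{k,\sigma}(T)$ from Theorems \ref{thm_chiT_1} and \ref{thm_dualcor}, and choosing $\delta\asymp\omega_{k,\sigma}(\e)$ to balance the bulk and boundary-layer contributions, should yield
\begin{equation*}
\norm{\nabla w_\e}_{L^2(\Omega)} + \sqrt{\lambda}\,\norm{w_\e}_{L^2(\Omega)} \le C\omega_{k,\sigma}(\e)^{1/2}\Big\{\norm{\nabla^2 u_0}_{L^2(\Omega)} + (1+\lambda\omega_{k,\sigma}(\e))^{1/2}\norm{\nabla u_0}_{L^2(\Omega)}\Big\},
\end{equation*}
the coercive $\lambda\int|w_\e|^2$ on the left absorbing the $\lambda\omega_{k,\sigma}(\e)$ contributions produced by pairing the commutator term with $w_\e$.

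To squeeze the extra factor $\omega_{k,\sigma}(\e)^{1/2}$ on $\norm{w_\e}_{L^2}$ I would run an Aubin--Nitsche duality: for each $h\in L^2(\Omega)$ solve the adjoint problem $\cL_\e^* z_\e+\lambda z_\e=h$ with the matching boundary condition, let $z_0$ be its homogenization, and express $\int_\Omega w_\e\,h$ via Green's identity and the equation for $w_\e$. Substituting the first-order approximation $z_0+\e\chi_T^*(\cdot/\e)K_{\e,\delta}(\nabla z_0)$ for $z_\e$ and applying the previous $H^1$-estimate \emph{twice} — once to $w_\e$ and once to the adjoint remainder $z_\e-z_0-\e\chi_T^*(\cdot/\e)K_{\e,\delta}(\nabla z_0)$ — compounds the two factors of $\omega_{k,\sigma}(\e)^{1/2}$ into $\omega_{k,\sigma}(\e)$, giving $\norm{w_\e}_{L^2}\le C\omega_{k,\sigma}(\e)(\cdots)$. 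Combining with the direct bound on the correction term yields (\ref{ineq_L2DN_C11}). The main technical obstacle lies in the Neumann case: $w_\e$ does \emph{not} inherit a homogeneous conormal condition since $A(\cdot/\e)\nabla u_0\cdot\nu\neq\widehat{A}\nabla u_0\cdot\nu=g$ on $\partial\Omega$, and the resulting boundary integral has to be absorbed using the trace theorem, Lemma \ref{lem_Oeu_g}, and the $H^2$-regularity of $u_0$ — which is precisely where the $C^{1,1}$ hypothesis on $\partial\Omega$ is used.
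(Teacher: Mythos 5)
Your proposal follows essentially the same route as the paper: the same modified first-order approximation $w_\e$ with the localized smoothing operator $K_{\e,\delta}$, the same use of the dual correctors $\phi_T$ via (\ref{eq_bT_phiT}) to handle the $b_T$ term, the same $H^1$ estimate of order $\omega_{k,\sigma}(\e)^{1/2}$ obtained by testing against $w_\e$ (the paper's inequality (\ref{ineq_dwdphi}) with $\varphi=w_\e$), and the same duality upgrade to $O(\omega_{k,\sigma}(\e))$ in $L^2$, which is exactly the framework the paper imports from \cite{ShenZhuge1, ShenZhuge2}. The only cosmetic difference is in the Neumann case, where the boundary term you worry about is most cleanly dispatched by subtracting the two weak formulations (both containing the same $\int_{\partial\Omega}g\cdot\varphi$), rather than by a separate trace-theorem absorption.
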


This theorem was essentially proved in \cite{ShenZhuge2} with $\lambda = 0$ and Dirichlet boundary condition. The cases with positive $\lambda$ or Neumann boundary condition follow from a similar argument. The novelty in the theorem we stated above is that we figure out explicitly how the bound depends on $\lambda$ and the certain derivatives. The sketch of the proof is as follows. The main step is to show that for any test function $\varphi\in H^1(\Omega)$,
\begin{equation}\label{ineq_dwdphi}
\begin{aligned}
&\int_{\Omega} A(x/\e) \nabla w_\e \cdot \nabla \varphi + \lambda \int_{\Omega} w_\e \cdot \varphi \\
&\qquad \le C\lambda \omega_{k,\sigma}(\e) \norm{\varphi}_{L^2(\Omega)} \norm{\nabla u_0}_{L^2(\Omega)} \\
&\qquad \qquad + C\Big\{ \omega_{k,\sigma}(\e) \norm{\nabla \varphi}_{L^2(\Omega)}+ \omega_{k,\sigma}(\e)^{1/2} \norm{\nabla \varphi}_{L^2(\Omega_{4\delta})} \Big\} \norm{\nabla u_0}_{H^1(\Omega)}.
\end{aligned}
\end{equation}
The proof of this inequality is vary similar to \cite[Lemma 10.4]{ShenZhuge2}, which will be skipped here. Observe that (\ref{ineq_dwdphi}) gives exactly the $H^1$ convergence rate if we set $\varphi = w_\e$ and bound $\norm{\nabla \varphi}_{L^2(\Omega_{4\delta})}$ roughly by $\norm{\nabla \varphi}_{L^2(\Omega)}$, i.e.
\begin{equation}\label{ineq_H1DN_C11}
\begin{aligned}
&\norm{ w_\e}_{H^1(\Omega)} + (1+\lambda)^{1/2} \norm{w_\e}_{L^2(\Omega)} \\
&\qquad \qquad \le C\omega_{k,\sigma}(\e)^{1/2} \Big\{ \norm{\nabla^2 u_0}_{L^2(\Omega)} + C(1+\lambda \omega_{k,\sigma}(\e))^{1/2} \norm{\nabla u_0}_{L^2(\Omega)} \Big\},
\end{aligned}
\end{equation}
Then we can use a duality argument, combining with (\ref{ineq_dwdphi}) and (\ref{ineq_H1DN_C11}), to improve the $L^2$ rate of convergence. The duality argument, as an indispensable part of our framework, has been used in \cite{ShenZhuge1}. Finally, we should mention that for periodic case, the result of Theorem \ref{thm_L2_C11} has also been proved in \cite{Su1} and \cite{Su2}, without showing how the constant depends on $\lambda$.

\section{Convergence rates in Lipschitz domains}
Recently, the sharp rates of convergence in $C^{1,1}$ domains were obtained for variational elliptic problems with rough periodic coefficients; see \cite{GG,Su1,Su2,ShenZhuge1,Gu} for example.  Theorem \ref{thm_L2_C11} possibly gives the nearly sharp rate of convergence for elliptic systems with almost-periodic coefficients in H. Weyl's sense. These results are restricted to $C^{1,1}$ domains and $u_0$ has to be in $H^2(\Omega)$, which are sufficient for the interior Lipschitz estimate as in \cite{ShenZhuge2}. However, they are definitely insufficient for boundary Lipschitz estimate with $C^{1,\alpha}$ domains and boundary data considered in this paper. In this section, we will extend the rate of convergence from $C^{1,1}$ domains to general Lipschitz domains for elliptic systems with almost-periodic coefficients. Our argument follows the same ideas as \cite{Shen1} and particularly relies on the solvability of $L^2$ elliptic boundary value problems with constant coefficients in Lipschitz domains. Now we state the main result of this section as follows:

\begin{theorem}\label{thm_H1_Lip}
	Suppose that $\Omega$ is a bounded Lipschitz domain and $A\in APW^2(\R^d)$ satisfies the ellipticity condition (\ref{def_ellipticity}). Let $u_\e$ be the weak solution  of (\ref{def_NP}) and $u_0$ be the weak solution of homogenized system (\ref{def_homo}) with the same boundary data. Let $w_\e$ be the first order approximation defined in (\ref{def_1st-appr}),
	then
	\begin{equation}\label{ineq_H1_Lip}
	\norm{ w_\e}_{H^1(\Omega)} + (1+\lambda)^{1/2} \norm{w_\e}_{L^2(\Omega)} \le C\omega_{k,\sigma}(\e)^{1/2} \Big\{\norm{F}_{L^2(\Omega)} + (1+\lambda) \norm{f}_{H^1(\partial\Omega)}\Big\},
	\end{equation}
	where $T = \e^{-1}, \delta = T^{-1}\Theta_{k,\sigma}(T)$ and $C$ depends only on $\sigma, k, A$ and Lipschitz character of $\Omega$.
\end{theorem}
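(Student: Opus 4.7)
The plan is to extend the $C^{1,1}$-domain estimate (\ref{ineq_H1DN_C11}) to general Lipschitz $\Omega$ by replacing the $H^2(\Omega)$-regularity of $u_0$, which is unavailable in the Lipschitz setting, with the $L^2$ theory of boundary value problems for the constant-coefficient operator $\cL_0+\lambda$ on Lipschitz domains. This is the strategy developed in \cite{Shen1} for periodic homogenization, which I would adapt to the almost-periodic framework.

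The starting point is the variational inequality (\ref{ineq_dwdphi}) from \cite[Lemma 10.4]{ShenZhuge2}. Because $K_{\e,\delta}$ carries the cut-off $\eta_\delta$ supported in $\Omega\setminus\Omega_\delta$, a careful inspection of the original derivation shows that the $\nabla^2 u_0$ contributions to the right-hand side of (\ref{ineq_dwdphi}) only arise from $\Omega\setminus\Omega_\delta$. Thus the factor $\norm{\nabla u_0}_{H^1(\Omega)}$ may be refined to $\widetilde X := \norm{\nabla u_0}_{L^2(\Omega)}+\norm{\nabla^2 u_0}_{L^2(\Omega\setminus\Omega_\delta)}$. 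I would then invoke the $L^2$ solvability of the relevant boundary value problem for $\cL_0+\lambda$ on Lipschitz $\Omega$ (Dahlberg--Kenig--Verchota and subsequent extensions to elliptic systems), yielding the nontangential maximal estimate
\[
\norm{(\nabla u_0)^*}_{L^2(\partial\Omega)} \le C\bigl\{\norm{F}_{L^2(\Omega)}+(1+\lambda)\norm{g}_{L^2(\partial\Omega)}\bigr\}.
\]
Combined with the boundary layer bound $\norm{\nabla u_0}_{L^2(\Omega_r)}\le Cr^{1/2}\norm{(\nabla u_0)^*}_{L^2(\partial\Omega)}$ and the weighted interior $H^2$ bound $\norm{\nabla^2 u_0}_{L^2(\Omega\setminus\Omega_r)}\le Cr^{-1/2}\norm{(\nabla u_0)^*}_{L^2(\partial\Omega)}$ (from interior $H^2$-theory and a Hardy-type inequality), these give $\widetilde X \le C\delta^{-1/2}\norm{(\nabla u_0)^*}_{L^2(\partial\Omega)}$.

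To conclude, I would set $\varphi=w_\e$ in the refined identity, use ellipticity and Young's inequality, and absorb $\norm{\nabla w_\e}_{L^2}^2$ into the left-hand side. The most delicate step is the localized term $\omega_{k,\sigma}(\e)^{1/2}\norm{\nabla w_\e}_{L^2(\Omega_{4\delta})}\widetilde X$, where crude global absorption would lose a power of $\omega_{k,\sigma}(\e)$; instead one should decompose $w_\e=u_\e-u_0-(\text{corrector})$ on $\Omega_{4\delta}$ and bound each piece using the sharp boundary layer estimate for $u_0$, Meyers's reverse H\"{o}lder inequality (uniform in $\e$) for $u_\e$, and Lemma \ref{lem_Oeu_g} for the corrector. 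The choice $\delta=T^{-1}\Theta_{k,\sigma}(T)$ then balances the various error contributions and yields the claimed $\omega_{k,\sigma}(\e)^{1/2}$ rate. The main obstacle is precisely this final balancing: the $\delta^{1/2}$ boundary-layer gain for $u_0$ must offset the $\delta^{-1/2}$ loss in $\widetilde X$, and one must carefully track the $\lambda$-dependence through the constant-coefficient $L^2$ boundary-value estimate so as to obtain the $(1+\lambda)$ prefactor rather than a larger power of $\lambda$.
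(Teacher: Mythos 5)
Your overall route is the paper's: test the equation for $w_\e$ against $w_\e$ itself, replace the global $H^2$ bound on $u_0$ (unavailable on Lipschitz domains) by the $L^2$ theory for the constant-coefficient operator, gain $\delta^{1/2}$ on the boundary layer and lose $\delta^{-1/2}$ in the interior second derivatives, and balance with $\delta=T^{-1}\Theta_{k,\sigma}(T)$. Two points, however, need attention. First, the nontangential maximal estimate you invoke does not apply directly to the inhomogeneous problem $\cL_0 u_0+\lambda u_0=F$; one must first split $u_0=v+h$ with $v$ a Newtonian potential of $F-\lambda u_0$ (so $v\in H^2(\Omega)$ by singular integral estimates) and $h$ an $\cL_0$-solution to which the $L^2$ regularity results of \cite{DKV,Gwj} apply. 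This is exactly how the paper obtains (\ref{ineq_u0_delta}) and the bound $\|\nabla^2 u_0\|_{L^2(\Omega\setminus\Omega_\delta)}\le C\delta^{-1/2}\{\cdots\}$, and it is also where the $(1+\lambda)$ prefactor is tracked (via $\lambda\|u_0\|_{L^2}$ in the bound for $\|v\|_{H^2}$).

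Second, and more seriously, your treatment of the ``localized term'' is both misorganized and, as proposed, would fail. The $\omega_{k,\sigma}(\e)^{1/2}$ in front of $\|\nabla\varphi\|_{L^2(\Omega_{4\delta})}$ in (\ref{ineq_dwdphi}) is generated precisely by the boundary-layer gain $\|\nabla u_0\|_{L^2(\Omega_{4\delta})}\le C\delta^{1/2}\{\cdots\}$; if you additionally multiply that term by $\widetilde X\sim\delta^{-1/2}$ you have paired the wrong factors and created a spurious obstruction. The correct bookkeeping (as in the paper's (\ref{ineq_dwdw})) puts every right-hand term in the form $\|\nabla w_\e\|_{L^2(\Omega)}\cdot\big(\text{quantity}\le C\omega_{k,\sigma}(\e)^{1/2}\,\text{data}\big)$, so the crude bound $\|\nabla w_\e\|_{L^2(\Omega_{4\delta})}\le\|\nabla w_\e\|_{L^2(\Omega)}$ and Cauchy--Schwarz absorption already give the claimed $\omega_{k,\sigma}(\e)^{1/2}$ rate; no refined layer estimate for $w_\e$ is needed for the $H^1$ bound (that refinement only matters for the duality improvement to rate $\omega_{k,\sigma}(\e)$, which is exactly what fails on Lipschitz domains). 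Moreover, the tools you propose for the refined estimate would not deliver it: Meyers's inequality gives only $\|\nabla u_\e\|_{L^2(\Omega_{4\delta})}\le C\delta^{1/2-1/\bar{q}}\{\cdots\}$, which cannot offset the $\delta^{-1/2}$ in $\widetilde X$, and the corrector piece on the layer carries the factor $\|\nabla\chi_T\|_{S_1^2}\le C_\sigma T^\sigma$ from Theorem \ref{thm_chiT_1}, which is not uniformly bounded without the Dini condition. Dropping the refinement and reorganizing as above repairs the proof. (A cosmetic point: the theorem's statement refers to (\ref{def_NP}) but the estimate involves the Dirichlet data $f$ and uses $w_\e\in H^1_0(\Omega)$; your mixing of $f$ and $g$ inherits this typo.)
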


Observe that (\ref{ineq_H1_Lip}) is exactly the generalization of (\ref{ineq_H1DN_C11}) in Lipschitz domains if we take the energy estimate for $u_0$ into account. However, the proof for (\ref{ineq_H1_Lip}) will be more involved since $\nabla^2 u_0$ may not be an $L^2$ function in general. Also note that, as a corollary, Theorem \ref{thm_H1_Lip} provides an $L^2$ rate of convergence
\begin{equation}\label{ineq_L2_Lip}
\norm{u_\e - u_0}_{L^2(\Omega)} \le C\omega_{k,\sigma}(\e)^{1/2} \Big\{(1+\lambda)^{-1/2}\norm{F}_{L^2(\Omega)} + (1+\lambda)^{1/2} \norm{f}_{H^1(\partial\Omega)}\Big\},
\end{equation}
which is clearly far from sharp. However, as far as we know, this estimate is the only one we can derive for Lipschitz domains since the duality argument seems not applicable in this case. In other words, we cannot improve the convergence rate from $\omega_{k,\sigma}(\e)^{1/2}$ to $\omega_{k,\sigma}(\e)$ as we (and many other authors) have done in $C^{1,1}$ domains.
Actually, the optimal rate of convergence in Lipschitz domains is still an open problem even for periodic case. The best result so far in periodic case is contained in \cite{KLS}, where under additional symmetry condition on the coefficients the authors showed that the rate of convergence in $L^2$ is $O(\e|\ln \e|^{1/2+}) $. For almost-periodic homogenization, very little is known for convergence rate in Lipschitz domains. Nevertheless, estimate (\ref{ineq_L2_Lip}) still allows us to proceed with our work on uniform regularity.

To prove Theorem \ref{thm_H1_Lip}, the following energy estimate will be useful to us.
\begin{theorem}
	Assume $A$ satisfies the ellipticity condition (\ref{def_ellipticity}) and $\Omega$ is a bounded Lipschitz domain. Let $u$ be the weak solution of
	\begin{equation}
	\text{div}(A \nabla u) +\lambda u= F \quad \text{in } \Omega, \qquad \text{and} \qquad u =f \quad \text{on } \partial\Omega,
	\end{equation}
	with $\lambda \ge 0$, then	
	\begin{equation}\label{ineq_energy}
	\norm{\nabla u}_{L^2(\Omega)} + (1+\lambda)^{1/2}\norm{u}_{L^2(\Omega)} \le C (1+\lambda)^{-1/2}\norm{F}_{L^2(\Omega)} + C (1+\lambda)^{1/2}\norm{f}_{H^{1/2}(\partial \Omega)},
	\end{equation}
	where $C$ depends only on $d,m,\mu$ and $\Omega$.
\end{theorem}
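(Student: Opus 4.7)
The plan is a standard energy identity after reducing to zero Dirichlet data, with the $(1+\lambda)$-dependence tracked by inserting the Poincar\'e inequality at the right place. First I would pick an extension $\tilde f\in H^1(\Omega)$ of $f$ with $\norm{\tilde f}_{H^1(\Omega)}\le C\norm{f}_{H^{1/2}(\partial\Omega)}$, which is available because $\partial\Omega$ is Lipschitz. Setting $v=u-\tilde f\in H^1_0(\Omega)$, this function is a weak solution of
\[
-\text{div}(A\nabla v)+\lambda v = F-\text{div}(A\nabla \tilde f)-\lambda \tilde f\quad\text{in }\Omega,
\]
up to the standard divergence-form sign convention in (\ref{ineq_energy}).

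The core step is to test the equation for $v$ against $v$ itself. Ellipticity produces $\mu\norm{\nabla v}_{L^2}^2+\lambda\norm{v}_{L^2}^2$ on the left. On the right I would apply Cauchy--Schwarz together with Young inequalities tuned to the parameter $\lambda$: bound $\int_\Omega A\nabla\tilde f\cdot\nabla v$ by $\tfrac{\mu}{4}\norm{\nabla v}_{L^2}^2+C\norm{\nabla\tilde f}_{L^2}^2$; bound $\lambda\int_\Omega \tilde f\,v$ by $\tfrac{\lambda}{4}\norm{v}_{L^2}^2+\lambda\norm{\tilde f}_{L^2}^2$; and bound $\int_\Omega F v$ by $\tfrac{1+\lambda}{4}\norm{v}_{L^2}^2+(1+\lambda)^{-1}\norm{F}_{L^2}^2$. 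To absorb the last piece I would invoke the Poincar\'e inequality on $H^1_0(\Omega)$, which gives $(1+\lambda)\norm{v}_{L^2}^2\le C(\norm{\nabla v}_{L^2}^2+\lambda\norm{v}_{L^2}^2)$; this upgrades the coercive quantity on the left from $\mu\norm{\nabla v}_{L^2}^2+\lambda\norm{v}_{L^2}^2$ to a multiple of $\norm{\nabla v}_{L^2}^2+(1+\lambda)\norm{v}_{L^2}^2$.

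Collecting, one obtains
\[
\norm{\nabla v}_{L^2(\Omega)}^2+(1+\lambda)\norm{v}_{L^2(\Omega)}^2\le C(1+\lambda)^{-1}\norm{F}_{L^2(\Omega)}^2+C(1+\lambda)\norm{f}_{H^{1/2}(\partial\Omega)}^2,
\]
where the boundary factor uses $\norm{\tilde f}_{H^1(\Omega)}\le C\norm{f}_{H^{1/2}(\partial\Omega)}$ together with $\norm{\tilde f}_{L^2(\Omega)}\le\norm{\tilde f}_{H^1(\Omega)}$. Taking square roots and returning to $u=v+\tilde f$, using the trace estimate once more to absorb $\tilde f$ on the right-hand side, yields (\ref{ineq_energy}). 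I do not anticipate a genuine obstacle: the only delicate point is choosing the Young splitting on the $\int_\Omega Fv$ term with weight $(1+\lambda)^{\pm 1}$ rather than constant weight, since this is exactly what forces the factor $(1+\lambda)^{-1/2}$ (and not $1$) in front of $\norm{F}_{L^2}$ in (\ref{ineq_energy}).
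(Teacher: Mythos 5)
Your argument is correct, and it is essentially the canonical proof: the paper states this energy estimate without proof (as a standard fact used in the proof of Theorem \ref{thm_H1_Lip}), and your lifting of the boundary data via a bounded extension $\tilde f$, testing against $v=u-\tilde f\in H^1_0(\Omega)$, and weighting the Young inequalities by $(1+\lambda)^{\pm1}$ is exactly the expected route. The only point to tighten is the absorption of the term $\tfrac{1+\lambda}{4}\norm{v}_{L^2}^2$: after upgrading via Poincar\'e to $C_0\bigl(\norm{\nabla v}_{L^2}^2+\lambda\norm{v}_{L^2}^2\bigr)$, the prefactor $\tfrac{C_0}{4}$ need not be small, so the Young splitting of $\int_\Omega Fv$ should use a weight $\epsilon(1+\lambda)$ with $\epsilon$ chosen small relative to $\mu$ and the Poincar\'e constant; this changes nothing in the conclusion since the compensating factor $\bigl(4\epsilon(1+\lambda)\bigr)^{-1}\norm{F}_{L^2}^2$ still carries the required $(1+\lambda)^{-1}$ weight.
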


\begin{proof}[Proof of Theorem \ref{thm_H1_Lip}]
	A direct algebraic manipulation shows that
	\begin{align}\label{eq_Lewe}
	\begin{aligned}
	\cL_\e(w_\e) + \lambda w_\e &= -\lambda \e \chi_{T,k}^\beta(x/\e) K_{\e,\delta} \left(\frac{\partial u_0^\beta}{\partial x_k}\right) + \frac{\partial}{\partial x_i} \left\{ b^{\alpha\beta}_{T,ij}(x/\e) K_{\e, \delta} \bigg( \frac{\partial u^\beta_0}{\partial x_j} \bigg) \right\}\\
	&\qquad  +  \frac{\partial}{\partial x_i} \left\{ \left\{\widehat{a}_{ij}^{\alpha\beta} - a^{\alpha\beta}_{ij}(x/\e) \right\} \left\{ K_{\e,\delta} \bigg(\frac{\partial u^\beta_0}{\partial x_j}\bigg)  - \frac{\partial u^\beta_0}{\partial x_j} \right\}\right\} \\
	&\qquad   + \e \frac{\partial}{\partial x_i} \left\{ a^{\alpha\beta}_{ij}(x/\e) \chi^{\beta\gamma}_{T,k}(x/\e) \frac{\partial}{\partial x_j}K_{\e, \delta}\bigg(  \frac{\partial u^\gamma_0}{\partial x_k }\bigg)  \right\},
	\end{aligned}
	\end{align}
	and
	\begin{align}\label{eq_bT}
	\begin{aligned}	
	&\frac{\partial}{\partial x_i} \left\{ b^{\alpha\beta}_{T,ij}(x/\e) K_{\e, \delta} \bigg( \frac{\partial u^\beta_0}{\partial x_j} \bigg) \right\} \\
	&= \ag{b_{T,ij}^{\alpha\beta}} \frac{\partial}{\partial x_i}K_{\e, \delta} \bigg( \frac{\partial u_0^\beta}{ \partial x_j} \bigg) + \frac{\partial}{\partial x_i} \left\{ T^{-2} \phi_{T,ij}^{\alpha\beta}(x/\e) K_{\e, \delta} \bigg(  \frac{\partial u_0^\beta}{\partial x_j}\bigg)\right\} \\
	&- \frac{\partial}{\partial x_i} \left\{ \frac{\partial}{\partial x_i} h_{T,j}^{\alpha\beta}(x/\e) K_{\e, \delta} \bigg( \frac{\partial u_0^\beta}{\partial x_j}\bigg)\right\} \\
	&- \e \frac{\partial}{\partial x_i} \left\{ \left[ \frac{\partial}{\partial x_k} (\phi_{T,ij}^{\alpha\beta})(x/\e) - \frac{\partial}{\partial x_i} (\phi_{T,kj}^{\alpha\beta})(x/\e) \right] \frac{\partial}{\partial x_k}K_{\e, \delta} \bigg( \frac{\partial u_0^\beta}{\partial x_j } \bigg) \right\},
	\end{aligned}
	\end{align}
	where $b_T$ and $\phi_T$ are defined by (\ref{def_bT}) and (\ref{def_phiT}), respectively. The proof of (\ref{eq_bT}) is based on the following observation derived from (\ref{def_phiT})
	\begin{equation}\label{eq_bT_phiT}
	b_{T,ij}^{\alpha\beta} = \ag{b_{T,ij}^{\alpha\beta}} - \frac{\partial}{\partial y_k} \left( \frac{\partial}{\partial y_k} \phi_{T,ij}^{\alpha\beta} - \frac{\partial}{\partial y_i} \phi_{T,kj}^{\alpha\beta} \right) - \frac{\partial}{\partial y_i}\left( \frac{\partial}{\partial y_k} \phi_{T,kj}^{\alpha\beta} \right) - T^{-2}\phi_{T,ij}^{\alpha\beta},
	\end{equation}
	as well as the fact that the second term in the right-hand side of (\ref{eq_bT_phiT}) is skew-symmetric with respect to $(i, k)$.
	
	Multiplying (\ref{eq_Lewe}) by $w_\e$ and integrating over $\Omega$, we arrive at
	\begin{equation}\label{ineq_dwdw}
	\begin{aligned}
	&\int_{\Omega} A(x/\e) \nabla w_\e \cdot \nabla w_\e + \lambda \int_{\Omega} w_\e \cdot w_\e \\
	& \qquad \le C\lambda \Theta_{k,\sigma}(\e) \norm{w_\e}_{L^2(\Omega)} \norm{K_{\e,\delta}(\nabla u_0)}_{L^2(\Omega)}\\
	& \qquad \qquad + C \norm{\nabla w_\e}_{L^2(\Omega)} \Big\{ \norm{K_{\e,\delta}(\nabla u_0) - \nabla u_0}_{L^2(\Omega)} + \Theta_{k,\sigma}(\e) \norm{\nabla K_{\e,\delta}(\nabla u_0) }_{L^2(\Omega)} \\
	& \qquad \qquad + |\ag{b_T}| \norm{K_{\e,\delta}(\nabla u_0)}_{L^2(\Omega)} + \Theta_{k,\sigma}(\e) \norm{K_{\e,\delta}(\nabla u_0)}_{L^2(\Omega)} \Big\},
	\end{aligned}
	\end{equation}
	where we also used (\ref{eq_bT}), Theorem \ref{thm_chiT_1}, \ref{thm_dualcor} and integration by parts. Note that there is no boundary integral arising since $w_\e \in H_0^1(\Omega)$.
	By the definition of $b_T$, we see that $|\ag{b_T}| \le C \norm{\nabla \chi_T - \psi}_{B^2}$ where $\psi$ is defined in (\ref{def_Vpot}).
	Thus it suffices to estimate $\norm{K_{\e,\delta}(\nabla u_0)}_{L^2(\Omega)}, \norm{K_{\e,\delta}(\nabla u_0) - \nabla u_0}_{L^2(\Omega)}$ and $\norm{\nabla K_{\e,\delta}(\nabla u_0) }_{L^2(\Omega)}$.
	
	First of all, by (\ref{ineq_S_p}) and energy estimate (\ref{ineq_energy}), one has
	\begin{align}\label{ineq_Kdu0}
	\begin{aligned}	
	\norm{K_{\e,\delta}(\nabla u_0)}_{L^2(\Omega)} &\le \norm{\nabla u_0}_{L^2(\Omega)} \\
	&\le  \frac{C}{(1+\lambda)^{1/2}}\norm{F}_{L^2(\Omega)} + C (1+\lambda)^{1/2}\norm{f}_{H^{1/2}(\partial \Omega)}.
	\end{aligned}
	\end{align}
	Next, observe that
	\begin{align*}
	&\norm{K_{\e,\delta}(\nabla u_0) - \nabla u_0}_{L^2(\Omega)} \\
	&\qquad \le \norm{S_{\e}(\theta_\delta \nabla u_0) - \theta_\delta \nabla u_0}_{L^2(\Omega)} + \norm{\nabla u_0}_{L^2(\Omega_{4\delta})} \\
	&\qquad  \le C \e \norm{\nabla^2 u_0}_{L^2(\Omega\setminus \Omega_\delta)} + C(1+\e \delta^{-1}) \norm{\nabla u_0}_{L^2(\Omega_{4\delta})},
	\end{align*}
	and
	\begin{equation*}
	\norm{\nabla K_{\e,\delta}(\nabla u_0) }_{L^2(\Omega)} \le C \norm{\nabla^2 u_0}_{L^2(\Omega\setminus \Omega_\delta)} + C\delta^{-1} \norm{\nabla u_0}_{L^2(\Omega_{4\delta})}.
	\end{equation*}
	Therefore, it is left to estimate $\norm{\nabla^2 u_0}_{L^2(\Omega\setminus \Omega_\delta)} $ and $\norm{\nabla u_0}_{L^2(\Omega_{4\delta})}$.
	
	To estimate $\norm{\nabla u_0}_{L^2(\Omega_{4\delta})}$, we write $u_0 = v + h$, where
	\begin{equation*}
	v(x) = \int_{\Omega} \Gamma_0(x-y)  (-\lambda u_0 + F(y))dy,
	\end{equation*}
	and $\Gamma_0$ denotes the matrix of fundamental solutions for homogenized operator $\cL_0$ in $\R^d$, with the pole at the origin. Note that $\cL_0 v= -\lambda u_0 + F$ and then by the well-known singular integral and fractional integral estimates,
	\begin{align*}
	\norm{ v}_{H^2(\Omega)} &\le  C\norm{F}_{L^2(\Omega)} + C\lambda \norm{u_0}_{L^2(\Omega)} \\
	& \le C(1+\lambda) \norm{f}_{H^1(\partial\Omega)} + C\norm{F}_{L^2(\Omega)}.
	\end{align*}
	Thus, by Lemma \ref{lem_Oeu_g},
	\begin{equation}\label{ineq_dv_delta}
	\norm{\nabla v}_{L^2(\Omega_{4\delta})} \le C\delta^{1/2} \norm{\nabla v}_{H^1(\Omega)} \le C\delta^{1/2} \Big\{\norm{F}_{L^2(\Omega)} + (1+\lambda) \norm{f}_{H^1(\partial\Omega)}\Big\}.
	\end{equation}
	Next we observe that $\cL_0 h = 0$ in $\Omega$, then
	\begin{align*}
	\norm{h}_{H^1(\partial\Omega)} &\le \norm{f}_{H^1(\partial\Omega)} + \norm{v}_{H^1(\partial\Omega)} \\
	&\le \norm{f}_{H^1(\partial\Omega)} + C\norm{v}_{H^2(\Omega)} \\
	& \le C(1+\lambda) \norm{f}_{H^1(\partial\Omega)} + C\norm{F}_{L^2(\Omega)}.
	\end{align*}
	Hence, it follows from the estimates for solutions of $L^2$ regularity problem in Lipschitz domains for the operator $\cL_0$ in \cite{DKV,Gwj} that
	\begin{equation*}
	\norm{(\nabla h)^*}_{L^2(\partial\Omega)} \le C(1+\lambda) \norm{f}_{H^1(\partial\Omega)} + C\norm{F}_{L^2(\Omega)},
	\end{equation*}
	where $(\nabla h)^*$ denotes the non-tangential maximal function of $\nabla h$. This, together with (\ref{ineq_dv_delta}), gives
	\begin{equation}\label{ineq_u0_delta}
	\norm{\nabla u_0}_{L^2(\Omega_{4\delta})} \le C\delta^{1/2} \Big\{\norm{F}_{L^2(\Omega)} + (1+\lambda) \norm{f}_{H^1(\partial\Omega)}\Big\}.
	\end{equation}
	
	It remains to estimate $\norm{\nabla^2 u_0}_{L^2(\Omega\setminus \Omega_\delta)}$. Note that the interior estimate for $\cL_0$ gives
	\begin{equation*}
	|\nabla^2 h(x)| \le \frac{C}{d(x)} \left( \fint_{B(x,\delta/4)} |\nabla h|^2 \right)^{1/2},
	\end{equation*}
	where $d(x) = \text{dist}(x,\partial\Omega)$. Hence,
	\begin{equation*}
	\norm{\nabla^2 h}_{L^2(\Omega\setminus \Omega_\delta)} \le C\delta^{-1/2} \Big\{\norm{F}_{L^2(\Omega)} + (1+\lambda) \norm{f}_{H^1(\partial\Omega)}\Big\}.
	\end{equation*}
	Combining this with the estimate for $\nabla^2 v$, one obtains
	\begin{equation*}
	\norm{\nabla^2 u_0}_{L^2(\Omega\setminus \Omega_\delta)} \le C\delta^{-1/2} \Big\{\norm{F}_{L^2(\Omega)} + (1+\lambda) \norm{f}_{H^1(\partial\Omega)}\Big\}.
	\end{equation*}
	As a result, we have shown
	\begin{equation}\label{ineq_Kdu0_du0}
	\norm{K_{\e,\delta}(\nabla u_0) - \nabla u_0}_{L^2(\Omega)} \le C(\e\delta^{-1/2} + \delta^{1/2}) \Big\{\norm{F}_{L^2(\Omega)} + (1+\lambda) \norm{f}_{H^1(\partial\Omega)}\Big\},
	\end{equation}
	and
	\begin{equation}\label{ineq_dKdu0}
	\norm{\nabla K_{\e,\delta}(\nabla u_0) }_{L^2(\Omega)} \le C\delta^{-1/2} \Big\{\norm{F}_{L^2(\Omega)} + (1+\lambda) \norm{f}_{H^1(\partial\Omega)}\Big\}.
	\end{equation}
	
	Now let $\delta = \Theta_{k,\sigma}(T)$. It follows from (\ref{ineq_dwdw}), (\ref{ineq_Kdu0}), (\ref{ineq_Kdu0_du0}) and (\ref{ineq_dKdu0}) that
	\begin{align*}
	&\int_{\Omega} A(x/\e) \nabla w_\e \cdot \nabla w_\e + \lambda \int_{\Omega} w_\e \cdot w_\e \\
	&\quad \le C\Theta_{k,\sigma}(T)^{1/2} \Big\{\norm{\nabla w_\e}_{L^2(\Omega)} + \lambda^{1/2} \norm{w_\e}_{L^2(\Omega)}\Big\}  \Big\{\norm{F}_{L^2(\Omega)} + (1+\lambda) \norm{f}_{H^1(\partial\Omega)}\Big\}\\
	&\quad \quad + C(1+\lambda)^{-1/2}\norm{\nabla \chi_T - \psi}_{B^2} \norm{\nabla w_\e}_{L^2(\Omega)} \Big\{\norm{F}_{L^2(\Omega)} + (1+\lambda) \norm{f}_{H^1(\partial\Omega)}\Big\}.
	\end{align*}
	It then follows that
	\begin{equation}
	\norm{\nabla w_\e}_{L^2(\Omega)} + \lambda^{1/2} \norm{w_\e}_{L^2(\Omega)} \le C\omega_{k,\sigma}(\e)^{1/2} \Big\{\norm{F}_{L^2(\Omega)} + (1+\lambda) \norm{f}_{H^1(\partial\Omega)}\Big\}.
	\end{equation}
	Finally, since $w_\e \in H_0^1(\Omega)$, it follows from the Poincar\'{e} inequality that
	\begin{equation}
	\norm{ w_\e}_{H^1(\Omega)} + (1+\lambda)^{1/2} \norm{w_\e}_{L^2(\Omega)} \le C\omega_{k,\sigma}(\e)^{1/2} \Big\{\norm{F}_{L^2(\Omega)} + (1+\lambda) \norm{f}_{H^1(\partial\Omega)}\Big\},
	\end{equation}
	which ends the proof.
\end{proof}

\begin{remark}
	A similar result for the Neumann problem also holds. Precisely, let $\Omega$ and $A$ be the same as in Theorem \ref{thm_H1_Lip} and $u_\e, u_0$ be the weak solution of (\ref{def_NP}) and the corresponding homogenized system with the same data, respectively, then
	\begin{equation}\label{ineq_L2N_Lip}
	\norm{u_\e - u_0}_{L^2(\Omega)} \le C\omega_{k,\sigma}(\e)^{1/2} \Big\{(1+\lambda)^{-1/2}\norm{F}_{L^2(\Omega)} + (1+\lambda)^{1/2} \norm{g}_{L^2(\partial\Omega)}\Big\}.
	\end{equation}
\end{remark}

Although most of this section was focused on Lipschitz domains with $H^1$ Dirichlet boundary data, sometimes we are also interested in $H^s$ boundary data when $s\neq 1$. The next theorem is concerned with the rate of convergence in $C^{1,1}$ domains with $H^s(\partial\Omega)$ Dirichlet boundary data, where $1/2\le s\le 3/2$. This theorem is of independent interest, though it will not be used in this paper. Before stating the theorem, we recall that Theorem \ref{thm_L2_C11} actually shows that if $\Omega$ is a $C^{1,1}$ domain, then
\begin{equation}\label{ineq_L2_C11}
\norm{u_\e - u_0}_{L^2(\Omega)} \le C\omega_{k,\sigma}(\e) \bigg\{\norm{F}_{L^2(\Omega)} + (1+\lambda) \norm{f}_{H^{3/2}(\partial\Omega)}\bigg\}.
\end{equation}
This follows from (\ref{ineq_L2DN_C11}) and the energy estimate.

\begin{theorem}\label{thm_L2_C11H1}
	Suppose that $\Omega$ is a bounded $C^{1,1}$ domain and $A\in APW^2(\R^d)$ satisfies the ellipticity condition (\ref{def_ellipticity}). Let $u_\e$ be the weak solution  of (\ref{def_NP}) and $u_0$ be the weak solution of homogenized system with the same data $(F,f)$. Then for every $s\in [1/2,3/2]$,
	\begin{equation}
	\Norm{u_\e - u_0}_{L^2(\Omega)} \le  C\omega_{k,\sigma}(\e) \norm{F}_{L^2(\Omega)} + C\big\{\omega_{k,\sigma}(\e)(1+\lambda)\big\}^{s-1/2}\norm{f}_{H^s(\partial\Omega)},
	\end{equation}
	where $C$ depends only on $d,m,\sigma, A$ and $\Omega$.
\end{theorem}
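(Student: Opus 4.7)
The plan is to prove this by splitting the problem into two pieces by linearity and then interpolating between endpoint estimates on the boundary data. Let $u_\e^F$ denote the weak solution of the Dirichlet problem $\cL_\e(u_\e^F)+\lambda u_\e^F=F$ in $\Omega$ with $u_\e^F=0$ on $\partial\Omega$, and let $u_\e^f$ denote the solution with data $(0,f)$; set $u_0^F, u_0^f$ analogously. The $F$-part is handled immediately by (\ref{ineq_L2_C11}) with $f=0$, which yields $\norm{u_\e^F-u_0^F}_{L^2(\Omega)}\le C\omega_{k,\sigma}(\e)\norm{F}_{L^2(\Omega)}$. For the $f$-part, I will introduce the linear map $T_\e:f\mapsto u_\e^f-u_0^f$ and establish two endpoint bounds from $H^s(\partial\Omega)$ to $L^2(\Omega)$, then invoke interpolation.

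At the upper endpoint $s=3/2$, inequality (\ref{ineq_L2_C11}) applied with $F=0$ gives directly
\begin{equation*}
\norm{T_\e f}_{L^2(\Omega)}\le C\omega_{k,\sigma}(\e)(1+\lambda)\norm{f}_{H^{3/2}(\partial\Omega)}.
\end{equation*}
At the lower endpoint $s=1/2$, I apply the energy estimate (\ref{ineq_energy}) separately to $u_\e^f$ and $u_0^f$ (with $F=0$): the factor $(1+\lambda)^{1/2}$ on the left and the factor $(1+\lambda)^{1/2}$ on the right cancel exactly, leaving $\norm{u_\e^f}_{L^2(\Omega)}+\norm{u_0^f}_{L^2(\Omega)}\le C\norm{f}_{H^{1/2}(\partial\Omega)}$ with a constant independent of both $\e$ and $\lambda$. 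The triangle inequality then yields
\begin{equation*}
\norm{T_\e f}_{L^2(\Omega)}\le C\norm{f}_{H^{1/2}(\partial\Omega)}.
\end{equation*}

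With the two endpoint bounds $\norm{T_\e}_{H^{1/2}(\partial\Omega)\to L^2(\Omega)}\le C$ and $\norm{T_\e}_{H^{3/2}(\partial\Omega)\to L^2(\Omega)}\le C\omega_{k,\sigma}(\e)(1+\lambda)$, complex (or K-method) interpolation along the Sobolev scale, together with the standard identification $[H^{1/2}(\partial\Omega),H^{3/2}(\partial\Omega)]_\theta=H^{1/2+\theta}(\partial\Omega)$, produces for $\theta=s-1/2\in[0,1]$
\begin{equation*}
\norm{T_\e f}_{L^2(\Omega)}\le C\big\{\omega_{k,\sigma}(\e)(1+\lambda)\big\}^{s-1/2}\norm{f}_{H^s(\partial\Omega)},
\end{equation*}
and combining with the $F$-estimate finishes the proof. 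The one point that deserves care, though ultimately routine, is the justification of the interpolation identity for the boundary Sobolev scale on a $C^{1,1}$ domain: this reduces via a finite atlas of $C^{1,1}$ charts to the well-known interpolation identity for fractional Sobolev spaces on $\R^{d-1}$, so no genuine analytical obstacle arises, and the uniformity of the interpolation constant in $\e$ and $\lambda$ is automatic from the bilinear form of the interpolation inequality.
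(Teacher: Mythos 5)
Your proposal is correct, and it rests on exactly the two endpoint estimates the paper uses --- the $C^{1,1}$ rate (\ref{ineq_L2_C11}) at $s=3/2$ and the $\lambda$-uniform energy estimate (\ref{ineq_energy}) at $s=1/2$ --- but you close the gap between them by abstract operator interpolation for $T_\e$, whereas the paper performs the interpolation by hand. Concretely, the paper extends $f$ to $\tilde f=Ef\in H^{s+1/2}(\R^d)$, mollifies at scale $\delta$ to get $f_\delta$ with $\norm{f_\delta}_{H^{3/2}(\partial\Omega)}\le C\delta^{s-3/2}\norm{f}_{H^s(\partial\Omega)}$ and $\norm{f-f_\delta}_{H^{1/2}(\partial\Omega)}\le C\delta^{s-1/2}\norm{f}_{H^s(\partial\Omega)}$, applies (\ref{ineq_L2_C11}) to the auxiliary solutions with data $(F,f_\delta)$ and the energy estimate to the difference with data $(0,f-f_\delta)$, and then optimizes $\delta=\omega_{k,\sigma}(\e)(1+\lambda)$. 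That explicit decomposition is precisely a bound on the $K$-functional, i.e.\ the content of the real-interpolation identity you invoke, so the two arguments are morally equivalent. What the paper's route buys is that it never needs the identification $[H^{1/2}(\partial\Omega),H^{3/2}(\partial\Omega)]_{s-1/2}=H^s(\partial\Omega)$ on a $C^{1,1}$ boundary --- true, but not entirely free, since already defining $H^{3/2}(\partial\Omega)$ and running the retraction/coretraction argument uses the full $C^{1,1}$ regularity of the chart transitions; instead it needs only an extension operator, a Fourier-side mollifier estimate on $\R^d$, and the trace theorem. Your route buys brevity, at the price of the interpolation identity you flag as routine. Two minor remarks: the statement's reference to (\ref{def_NP}) is evidently a typo for the Dirichlet problem (\ref{def_DP}), which is what both you and the paper's proof actually treat; and your linear splitting into the data $(F,0)$ and $(0,f)$ is harmless but not needed, since one can carry $F$ through both auxiliary problems as the paper does.
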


\begin{proof}
	Since $f\in H^s(\partial\Omega), s\ge 1/2$ and $\Omega$ is $C^{1,1}$, then there exists a extension operator $E$ such that $Ef\in H^{s+1/2}(\R^d)$ and $\text{Tr} (Ef) = f$ on $\partial\Omega$ and $\norm{Ef}_{H^{s+1/2}(\R^d)} \le C\norm{f}_{H^s(\partial\Omega)}$, where $C$ depends only on $d$ and $\Omega$. Denote $Ef$ by $\tilde{f}$. Let $\phi\in C^\infty_0(B_1(0))$ such that $\int \phi = 1$ and $\phi_\delta(x) = \delta^{-d} \phi(x/\delta)$, where $\delta>0$ is to be determined. Set $\tilde{f}_\delta = \phi_\delta * \tilde{f}$. Clearly, $\tilde{f}_\delta$ is smooth. We claim that
	\begin{equation}\label{ineq_Hs_Fourier}
	\norm{\tilde{f}_\delta}_{H^2(\R^d)} \le C\delta^{s-3/2} \norm{\tilde{f}}_{H^{s+1/2}(\R^d)} \le C\delta^{s-3/2} \norm{f}_{H^s(\partial\Omega)}.
	\end{equation}
	Actually, this is a standard exercise for the equivalent $H^s$ norm defined by Fourier transform, i.e.
	\begin{equation*}
	\norm{g}_{H^s(\R^d)}^2 = \int_{\R^d} (1+|\xi|^2)^{s} |\mathcal{F} g(\xi)|^2 d\xi.
	\end{equation*}
	The details are left to the readers. Now we let $f_\delta = \text{Tr} \tilde{f}_\delta $. By trace theorem and (\ref{ineq_Hs_Fourier}), we know $\norm{f_\delta}_{H^{3/2}(\partial\Omega)} \le C\delta^{s-3/2} \norm{f}_{H^s(\partial\Omega)}$.
	
	Next, we construct a Dirichlet problem as follows:
	\begin{equation*}
	\cL_\e(\tilde{u}_\e) + \lambda \tilde{u}_\e = F \quad \text{in } \Omega, \qquad \text{and} \qquad \tilde{u}_\e =f_\delta \quad \text{on } \partial\Omega.
	\end{equation*}
	Also, the corresponding homogenized problem is:
	\begin{equation*}
	\cL_0(\tilde{u}_0)  + \lambda \tilde{u}_0 = F \quad \text{in } \Omega, \qquad \text{and} \qquad \tilde{u}_0 =f_\delta \quad \text{on } \partial\Omega.
	\end{equation*}
	Since $\Omega$ is $C^{1,1}$ and $f_\delta \in H^{3/2}(\partial\Omega)$, then it follows form (\ref{ineq_L2_C11}) that
	\begin{align*}
	\norm{\tilde{u}_\e - \tilde{u}_0}_{L^2(\Omega)} & \le C\omega_{k,\sigma}(\e) \big\{ \norm{F}_{L^2(\Omega)} + (1+\lambda)\norm{f_\delta}_{H^{3/2}(\partial\Omega)} \big\} \\
	& \le C\omega_{k,\sigma}(\e) \big\{ \norm{F}_{L^2(\Omega)} + \delta^{s-3/2}(1+\lambda) \norm{f}_{H^{s}(\partial\Omega)} \big\}.
	\end{align*}
	
	On the other hand, $v_\e = u_\e - \tilde{u}_\e$ satisfies
	\begin{equation*}
	\cL_\e(v_\e)  + \lambda v_\e = 0 \quad \text{in } \Omega, \qquad \text{and} \qquad v_\e =f - f_\delta \quad \text{on } \partial\Omega.
	\end{equation*}
	Then it follows from energy estimate and trace theorem that
	\begin{equation*}
	\norm{u_\e - \tilde{u}_\e}_{L^2(\Omega)} \le C \norm{f -f_\delta}_{H^{1/2}(\partial\Omega)} \le C\norm{\tilde{f} - \tilde{f}_\delta}_{H^1(\R^d)} \le C \delta^{s-1/2} \norm{f}_{H^s(\partial\Omega)},
	\end{equation*}
	where we have used the fact $s\ge 1/2$. Similarly, we also have
	\begin{equation*}
	\norm{u_0 - \tilde{u}_0}_{L^2(\Omega)} \le C\delta^{s-1/2} \norm{f}_{H^s(\partial\Omega)}.
	\end{equation*} 
	As a consequence,
	\begin{align*}
	&\norm{u_\e - u_0}_{L^2(\Omega)} \\
	&\qquad  \le \norm{u_\e - \tilde{u}_\e}_{L^2(\Omega)} + \norm{\tilde{u}_\e - \tilde{u}_0}_{L^2(\Omega)} + \norm{u_0 - \tilde{u}_0}_{L^2(\Omega)} \\
	&\qquad \le C \omega_{k,\sigma}(\e) \norm{F}_{L^2(\Omega)} + C\big\{ \delta^{s-1/2}  + \delta^{s-3/2} \omega_{k,\sigma}(\e) (1+\lambda) \big\} \norm{f}_{H^s(\partial\Omega)} \\
	&\qquad \le C \omega_{k,\sigma}(\e) \norm{F}_{L^2(\Omega)} + C \big\{\omega_{k,\sigma}(\e)(1+\lambda)\big\}^{s-1/2} \norm{f}_{H^s(\partial\Omega)},
	\end{align*}
	where in the last inequality we have chosen $\delta = \omega_{k,\sigma}(\e)(1+\lambda)$.
\end{proof}

\section{Boundary Lipschitz estimate}
In this section we will study the uniform boundary Lipschitz estimates down to the scale $\e$ in $C^{1,\alpha}$ domains with Dirichlet or Neumann boundary conditions. The Dirichlet and Neumann cases will be treated in two subsections separately. We modify the argument in \cite{Shen1} to make it adapted to general $\lambda > 0$.

Let $D_r, \Delta_r$ be defined in (\ref{def_C1a}). Note that $D_r$ acts as a subset of $\Omega$ who shares the same boundary portion $\Delta_r $ with $D_r$. Therefore, to establish the boundary estimates, it suffices to consider the boundary value problems in $D_r$. Throughout this section, $\alpha \in (0,1)$ will be fixed and $\lambda $ is restricted in $[0,1]$ so that it essentially has no influence on our proofs and results. For the case $\lambda >1$, we can use rescaling $v_\e(x) = \lambda u_\e (\lambda^{-1/2}x)$ so that it reduces to the case of $\lambda = 1$. However, in this case the constant will also depend on $\lambda$.

\subsection{Dirichlet boundary value problems}
Throughout this subsection, we let $u_\e \in H^1(D_2;\R^d)$ be a weak solution of $\cL_\e(u_\e) + \lambda u_\e = F$ in $D_2$ with $u_\e = f$ on $\Delta_2$. Define the following auxiliary quantities adapted for nonzero $\lambda$:
\begin{equation}\label{def_Phi}
\begin{aligned}
\Phi(t) &= \frac{1}{t} \inf_{q\in \R^d} \Bigg\{ \left( \fint_{D_t} |u_\e - q|^2 \right)^{1/2} + t^2 \left( \fint_{D_t} |F|^p \right)^{1/p} \\
&\qquad + t^2\lambda |q| + \norm{f -q}_{L^\infty(\Delta_{t})} + t \norm{\nabla_{\tan} f}_{L^\infty(\Delta_t)}\Bigg\},
\end{aligned}
\end{equation}
and
\begin{equation}\label{def_H}
\begin{aligned}
H(t;u) & = \frac{1}{t} \inf_{\substack{P\in \R^{d\times d} \\q\in \R^d}} \Bigg\{ \left( \fint_{D_t} |u -Px - q|^2 \right)^{1/2} + t^2 \left( \fint_{D_t} |F|^p \right)^{1/p} \\
&\qquad + t^2 \lambda \norm{Px+q}_{L^\infty(D_t)} + \norm{f -Px-q}_{L^\infty(\Delta_{t})}  \\
&\qquad + t \norm{\nabla_{\tan} (f - Px) }_{L^\infty(\Delta_t)} + t^{1+\tau} \norm{\nabla_{\tan} (f - Px) }_{C^\tau(\Delta_t)} \Bigg\},
\end{aligned}
\end{equation}
where $p>d$ and $\tau \in (0,\alpha)$.

\begin{lemma}\label{lem_ue_v}
	Let $\e \le r\le 1$. There exists $v\in H^1(D_r;\R^d)$ such that $\cL_0 (v)  + \lambda v = F$ in $D_r$, $v =f$ on $\Delta_r$ and
	\begin{equation}\label{ineq_ue_flat}
	\frac{1}{r} \left( \fint_{D_r} |u_\e - v|^2\right)^{1/2} \le C[\omega_{k,\sigma}(\e/r)]^{1/2} \Phi(2r),
	\end{equation}
	where $C$ depends only on $A,\sigma$ and $M$.
\end{lemma}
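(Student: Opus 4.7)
The plan is to build $v$ by solving the homogenized equation on a slightly enlarged subdomain $D_\rho$ with $\rho\in[r,2r]$ chosen via a good-slice argument, using $u_\e$ itself as Dirichlet data on $\partial D_\rho$, and then invoke the $L^2$ convergence rate in Lipschitz domains, namely (\ref{ineq_L2_Lip}). Since both the homogenized equation and the boundary condition imposed on $v$ are affine in $(v,f,F)$, I would first reduce to the case where the infimum defining $\Phi(2r)$ is attained at $q=0$ by replacing $(u_\e,v,f,F)$ with $(u_\e-q,\,v-q,\,f-q,\,F-\lambda q)$. With this normalization, define $v$ on $D_\rho$ as the weak solution of
\begin{equation*}
\cL_0 v + \lambda v = F\ \text{in}\ D_\rho,\qquad v = u_\e\ \text{on}\ \partial D_\rho.
\end{equation*}
Since $u_\e = f$ on $\Delta_\rho\supset\Delta_r$, the restriction $v|_{D_r}$ automatically satisfies $v=f$ on $\Delta_r$ and is the candidate of the lemma.

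Next I would apply (\ref{ineq_L2_Lip}) to $u_\e-v\in H_0^1(D_\rho)$ with $\Omega=D_\rho$. Rescaling $x=\rho y$ puts the problem on a $C^{1,\alpha}$ Lipschitz domain of unit size (whose Lipschitz character is controlled by $M$ alone), the coefficient oscillation scale becomes $\e/\rho$, and the zeroth-order parameter becomes $\rho^2\lambda\in[0,1]$. After converting the $L^2$ norm of $F$ appearing on the right-hand side of (\ref{ineq_L2_Lip}) into an $L^p$ norm ($p>d$) by H\"older, using the monotonicity of $\omega_{k,\sigma}$ to absorb $\omega_{k,\sigma}(\e/\rho)$ into $\omega_{k,\sigma}(\e/r)$, and undoing the scaling, I would obtain an estimate of the form
\begin{equation*}
\frac{1}{r}\left(\fint_{D_\rho}|u_\e-v|^2\right)^{1/2}\le C[\omega_{k,\sigma}(\e/r)]^{1/2}\left\{r\left(\fint_{D_\rho}|F|^p\right)^{1/p}+r^{-d/2}\|u_\e\|_{H^1(\partial D_\rho)}\right\}.
\end{equation*}

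The crux of the argument is controlling $\|u_\e\|_{H^1(\partial D_\rho)}$ by $r^{(d-1)/2}\cdot r\,\Phi(2r)$. I would first use Fubini in $\rho$: since
\begin{equation*}
\int_r^{2r}\Big(\|u_\e\|_{L^2(\partial D_\rho)}^2+\|\nabla u_\e\|_{L^2(\partial D_\rho)}^2\Big)d\rho\le C\Big(\|u_\e\|_{L^2(D_{2r})}^2+\|\nabla u_\e\|_{L^2(D_{2r}\setminus D_r)}^2\Big),
\end{equation*}
there exists a good slice $\rho\in[r,2r]$ where the full trace norm is bounded by $r^{-1/2}$ times the bulk $H^1$ norm on $D_{2r}$. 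A boundary Caccioppoli inequality for $\cL_\e+\lambda$ on $D_{2r}$, using the Dirichlet data $u_\e=f$ on $\Delta_{2r}$, then controls $\|\nabla u_\e\|_{L^2(D_{2r})}$ by $r^{-1}\|u_\e\|_{L^2(D_{3r})}$ plus volume-weighted $L^p$ norms of $F$, $L^\infty$ norms of $f$, and $L^\infty$ norms of $\nabla_{\tan} f$—precisely the quantities packaged in $r^{d/2}\Phi(2r)$ (recalling the $q=0$ normalization and that $\lambda|q|$ enters Caccioppoli through the zeroth-order term). For $r$ close to $1$ only a mild shrinking of these enlarged scales is needed to stay inside $D_2$ where $u_\e$ is defined. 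Feeding the resulting bound on $\|u_\e\|_{H^1(\partial D_\rho)}$ back into the rescaled rate gives the claim. The main obstacle will be the careful bookkeeping of scaling factors—ensuring the $L^p$ dependence on $F$, the pointwise bounds on $f$ and $\nabla_{\tan} f$, and the $\lambda|q|$ contribution all match the structure of $\Phi(2r)$ without extra $r$-dependent losses once the trace and Caccioppoli estimates are combined.
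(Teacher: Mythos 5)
Your proposal is correct and follows essentially the same route as the paper: solve the homogenized problem on a good slice $D_t$ (chosen by a co-area/Fubini argument so that the trace norm $\|u_\e\|_{H^1(\partial D_t)}$ is controlled), feed this into the $L^2$ convergence rate (\ref{ineq_L2_Lip}) after rescaling, bound the bulk gradient by Caccioppoli, and handle the constant $q$ by the affine invariance of the problem. The only cosmetic difference is that the paper takes the good slice in $[5r/4,3r/2]$ so that all enlarged sets stay inside $D_{2r}\subset D_2$ and the right-hand side is exactly $\Phi(2r)$, which is the ``mild shrinking'' you already anticipated.
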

\begin{proof}
	By rescaling, it is sufficient to prove (\ref{ineq_ue_flat}) with $r = 1$. First by Caccioppoli's inequality,
	\begin{equation*}
	\int_{D_{3/2}} |\nabla u_\e|^2 \le C \left\{ \int_{D_{2}} |u_\e|^2 + \int_{D_{2}} |F|^2 + \norm{f}^2_{L^\infty(\Delta_2)} + \norm{\nabla_{\tan} f}^2_{L^\infty(\Delta_2)}\right\}.
	\end{equation*}
	By the co-area formula, this implies that there exists some $t\in [5/4,3/2]$ such that
	\begin{align*}
	&\int_{\partial D_t\setminus \Delta_2} \left( |\nabla u_\e|^2 + |u_\e|^2 \right) \\
	& \qquad \le C \left\{ \int_{D_{2}} |u_\e|^2 + \int_{D_{2}} |F|^2 + \norm{f}^2_{L^\infty(\Delta_2)} + \norm{\nabla_{\tan} f}^2_{L^\infty(\Delta_2)}\right\}.
	\end{align*}
	Let $v$ be the weak solution to the Dirichlet problem:
	\begin{equation}
	\cL_0(v) + \lambda v = F \quad \text{in } D_t \quad \text{and} \quad v = u_\e \quad \text{on } \partial D_t.
	\end{equation}
	Note that $v = f$ on $\Delta_1$. Then it follows from (\ref{ineq_L2_Lip}) that
	\begin{align*}
	&\norm{u_\e - v}_{L^2(D_1)} \le \norm{u_\e - v}_{L^2(D_t)} \\
	&\qquad  \le C [\omega_{k,\sigma}(\e)]^{1/2} \left\{ \norm{F}_{L^2(D_t)} + \norm{u_\e}_{H^1(\partial D_t)} \right\}\\
	&\qquad  \le C [\omega_{k,\sigma}(\e)]^{1/2} \left\{ \norm{u_\e}_{L^2(D_2)} +\norm{F}_{L^2(D_2)} + \norm{f}_{L^\infty(\Delta_2)} +  \norm{\nabla_{\tan} f}^2_{L^\infty(\Delta_2)} \right\}.
	\end{align*}
	This implies
	\begin{align*}
	\left( \fint_{D_1} |u_\e - v|^2\right)^{1/2} &\le C[\omega_{k,\sigma}(\e)]^{1/2} \Bigg\{ \left( \fint_{D_2} |u_\e|^2 \right)^{1/2} + \left( \fint_{D_2} |F|^p \right)^{1/p} \\
	&\qquad + \norm{f}_{L^\infty(\Delta_{2})} +  \norm{\nabla_{\tan} f}_{L^\infty(\Delta_2)}\Bigg\}.
	\end{align*}
	Finally observe that the last inequality still holds if we subtract a constant $q\in \R^d$ simultaneously from $u_\e, v$ and $f$. This gives us the desired estimate with $r =1$ by taking the infimum over all $q\in \R^d$.
\end{proof}

\begin{lemma}[Flatness property for $\cL_0$]\label{lem_Htheta}
	Let $v \in H^1(D_2;\R^d)$ be a weak solution of $\cL_0(v) + \lambda v= F$ in $D_2$ with $ v = f$ on $\Delta_2$. Then there exists $\theta\in (0,1/4)$, depending only on $p,A,\tau,\alpha$ and $M$, such that
	\begin{equation}
	H(\theta r; v) \le \frac{1}{2} H(r; v).
	\end{equation}
\end{lemma}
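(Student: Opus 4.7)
\bigskip
\noindent\textbf{Proof proposal for Lemma 4.2.}

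By the natural parabolic-type rescaling $\tilde v(x) = r^{-1} v(rx)$, $\tilde F(x) = r\, F(rx)$, $\tilde \lambda = r^2 \lambda$, $\tilde f(x') = r^{-1} f(rx')$, the domain $D_r$ associated with $\phi$ becomes $\tilde D_1$ associated with $\tilde\phi(x') = r^{-1}\phi(rx')$, which still satisfies $\|\nabla\tilde\phi\|_{C^\alpha}\le M$ since $r\le 1$. Under this scaling $\tilde v$ solves $\cL_0\tilde v + \tilde\lambda \tilde v = \tilde F$ in $\tilde D_2$ with $\tilde v = \tilde f$ on $\tilde\Delta_2$, and $\tilde\lambda\in[0,1]$. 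A direct check shows $H(\theta; \tilde v) = H(\theta r; v)$ and $H(1;\tilde v)=H(r;v)$. Hence it suffices to prove the estimate at $r=1$, that is, $H(\theta; v)\le \tfrac12 H(1;v)$.

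Fix a pair $(P_0,q_0)\in\R^{d\times d}\times\R^d$ that nearly attains the infimum in $H(1;v)$, and set $w = v - P_0 x - q_0$. Then $w$ satisfies
\begin{equation*}
\cL_0 w + \lambda w = F - \lambda(P_0 x + q_0) \quad\text{in } D_1,
\qquad w = f - P_0 x - q_0 \quad\text{on } \Delta_1,
\end{equation*}
and every term appearing on the right (including the boundary data and its tangential $C^{1,\tau}$ norm) is bounded by $H(1;v)$, by construction. Since $\cL_0+\lambda$ is a constant-coefficient strongly elliptic operator with $\lambda\in[0,1]$ and $D_1$ is a $C^{1,\alpha}$ domain with $\tau<\alpha$, classical boundary Schauder theory (combined with Sobolev embedding applied to the $L^p$ datum $F$, which requires $p>d$) yields
\begin{equation*}
\|w\|_{C^{1,\tau}(\overline{D_{1/2}})} \le C\Bigl\{\|w\|_{L^2(D_1)} + \|F-\lambda(P_0x+q_0)\|_{L^p(D_1)} + \|f-P_0x-q_0\|_{C^{1,\tau}(\Delta_1)}\Bigr\} \le C H(1;v).
\end{equation*}
Now set $q_1 = w(0)$ and $P_1 = \nabla w(0)$, both of magnitude at most $CH(1;v)$. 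Taylor expansion at the origin gives $|w(x)-q_1-P_1 x|\le C|x|^{1+\tau} H(1;v)$ on $D_{1/2}$.

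We take $(P_0+P_1, q_0+q_1)$ as the candidate pair for $H(\theta; v)$ and bound the six terms in the definition of $H$. The bulk term $\tfrac1\theta(\fint_{D_\theta}|v-(P_0+P_1)x-(q_0+q_1)|^2)^{1/2}$ is $\le C\theta^\tau H(1;v)$ from the Taylor estimate; the $F$ term is $\le \theta^{1-d/p} H(1;v)$ since $p>d$; the $\lambda$ term is $\le C\theta\, H(1;v)$ using $\lambda\le 1$ and the bounds on $P_1,q_1$; the boundary $L^\infty$ term is $\le C\theta^\tau H(1;v)$ by restricting the Taylor bound to $\Delta_\theta\subset D_\theta$; the tangential gradient term uses $\nabla_{\tan}(f-(P_0+P_1)x)(0)=0$ (by choice of $P_1$, noting $\nabla\phi(0)=0$) together with a $C^\tau$ seminorm bound to give $\le C\theta^\tau H(1;v)$; and the $C^\tau$ seminorm term is similarly $\le C\theta^\tau H(1;v)$, where the extra contribution $[\nabla_{\tan}(P_1 x)]_{C^\tau(\Delta_\theta)}$ is controlled by $C|P_1|\|\nabla\phi\|_{C^\alpha}$ since $\tau<\alpha$. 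Summing, $H(\theta;v)\le C_0 \theta^\beta H(1;v)$ with $\beta = \min(\tau, 1-d/p)>0$, and we choose $\theta\in(0,1/4)$ so that $C_0\theta^\beta \le \tfrac12$.

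The main technical point is the precise form of the boundary $C^{1,\tau}$ Schauder estimate for $\cL_0+\lambda$ that preserves the $\lambda$-dependence in the correct way and handles the $L^p$ source term via the $C^{1,\tau}\hookrightarrow W^{2,p}$ scale; the remaining care is bookkeeping of the tangential derivative on the $C^{1,\alpha}$ graph $\Delta$, where one uses $\nabla\phi(0)=0$ to pin down $\nabla_{\tan}$ at the base point and the $C^\alpha$ regularity of the normal to bound cross terms produced by subtracting linear functions.
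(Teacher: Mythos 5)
Your proof is correct and follows essentially the same route as the paper's: rescale to $r=1$, subtract a (near-)optimal affine function $P_0x+q_0$, apply the boundary $C^{1,\beta}$ Schauder estimate for the constant-coefficient operator $\cL_0+\lambda$ on the $C^{1,\alpha}$ domain, and pick the improved affine approximant from the value and gradient of the corrected solution at the origin, which yields $H(\theta;v)\le C_0\theta^{\min\{\tau,\,1-d/p\}}H(1;v)$ exactly as in the paper (the only difference is bookkeeping: you fix a near-minimizer up front, while the paper proves the bound for $v$ itself and then passes to $w=v-Px-q$ and takes the infimum at the end). One cosmetic remark: the paper assumes $\phi(0)=0$ but not $\nabla\phi(0)=0$; your argument does not actually need the latter, since the choice $P_1=\nabla w(0)$ already makes $\nabla w-P_1$, and hence its tangential part on $\Delta_\theta$, vanish at the base point.
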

\begin{proof}
	This lemma is similar as \cite[Theorem 7.1]{AS}, which follows form the boundary $C^{1,\alpha}$ estimate for the second-order elliptic system with constant coefficients. By rescaling, we may assume $r = 1$. By choosing $q = v(0)$ and $P = \nabla v(0)$, we can see
	\begin{equation*}
	H(\theta;v) \le C \theta^\beta \norm{v}_{C^{1,\beta}(D_\theta)} + C\theta^\beta \left(\fint_{D_1} |F|^p \right)^{1/p},
	\end{equation*}
	where $\beta = \min\{\tau, (p-d)/p\}$. By boundary $C^{1,\alpha}$ estimate, we obtain
	\begin{align*}
	\norm{v}_{C^{1,\beta}(D_\theta)} \le & C \bigg \{ \left( \fint_{D_1} |v|^2 \right)^{1/2} + \left(\fint_{D_1} |F|^p \right)^{1/p}  \\
	& \qquad + \norm{f}_{L^\infty(\Delta_1)} + \norm{\nabla_{\tan} f}_{L^\infty(\Delta_1) } + \norm{\nabla_{\tan} f}_{C^\beta (\Delta_1) } \bigg\}.
	\end{align*}
	Then it follows that
	\begin{align}
	\begin{aligned}\label{ineq_H_thev}
	H(\theta;v) \le & C \theta^\beta \bigg \{ \left( \fint_{D_1} |v|^2 \right)^{1/2} + \left(\fint_{D_1} |F|^p \right)^{1/p}  \\
	& \qquad + \norm{f}_{L^\infty(\Delta_1)} + \norm{\nabla_{\tan} f}_{L^\infty(\Delta_1) } + \norm{\nabla_{\tan} f}_{C^\beta (\Delta_1) } \bigg\}.
	\end{aligned}
	\end{align}
	Now note that $\cL_0(Px+q) = 0$ for any $P\in \R^{d\times d}, q\in \R^d$. Let $w = v -Px-q$, then
	\begin{equation}
	\cL(w)+\lambda w = F - \lambda(Px+q),
	\end{equation}
	with Dirichlet boundary data $w = f-Px-q$ on $\Delta_2$. Applying (\ref{ineq_H_thev}) to $w$, we arrive at
	\begin{align}
	\begin{aligned}\label{ineq_H_thew}
	H(\theta;w) \le & C \theta^\beta \bigg \{ \left( \fint_{D_1} |v-Px-q|^2 \right)^{1/2} + \left(\fint_{D_1} |F|^p \right)^{1/p}   \\
	& \qquad + \lambda \norm{Px+q}_{L^\infty(D_1)} + \norm{f-Px-q}_{L^\infty(\Delta_1)} \\
	& \qquad + \norm{\nabla_{\tan} (f-Px)}_{L^\infty(\Delta_1) } + \norm{\nabla_{\tan} (f-Px)}_{C^\beta (\Delta_1) } \bigg\}.
	\end{aligned}
	\end{align}
	Also, it follows from triangle inequality that
	\begin{equation}\label{ineq_HvHw}
	H(\theta;v) \le H(\theta;w) + \theta \lambda \norm{Px+q}_{L^\infty(D_1)}.
	\end{equation}
	Combing (\ref{ineq_H_thew}) and (\ref{ineq_HvHw}) and taking the infimum over all $P\in \R^{d\times d}, q\in \R^d$, we obtain
	\begin{equation}
	H(\theta;v) \le C\theta^\beta H(1;v).
	\end{equation}
	The desired estimate follows by fixing a $\theta \in (0,1/4)$ so small that $C\theta^\beta \le 1/2$.
\end{proof}

\begin{lemma}[Flatness property for $\cL_\e$]\label{lem_HPhi}
	Let $0<\e<1/2$, then there exists a $\theta\in (0,1/4)$ such that for any $r\in [\e,1/2]$,
	\begin{equation}
	H(\theta r; u_\e) \le \frac{1}{2} H(r;u_\e) + C [\omega_{k,\sigma}(\e/r)]^{1/2} \Phi(2r),
	\end{equation}
	where $C$ depends only on $p,A,\alpha,\tau,\sigma$ and $M$.
\end{lemma}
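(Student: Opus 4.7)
The plan is a three-step homogenization/regularity sandwich: approximate $u_\e$ by a solution $v$ of the homogenized system via Lemma \ref{lem_ue_v}, apply the $C^{1,\alpha}$-based flatness Lemma \ref{lem_Htheta} to $v$, and then transfer the improved flatness back to $u_\e$ through the triangle inequality. The parameter $\theta$ will be the one produced by Lemma \ref{lem_Htheta} (it depends only on $p,A,\tau,\alpha,M$), and from then on we regard $\theta$ as a fixed absolute constant, absorbing powers of $\theta^{-1}$ into $C$.

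First I would fix $r\in [\e, 1/2]$ and invoke Lemma \ref{lem_ue_v} (applied on a scale slightly larger than $r$ if necessary, so that $v$ actually satisfies the homogenized system on a domain big enough to make $H(r;v)$ meaningful) to obtain a weak solution $v$ of $\cL_0 v+\lambda v = F$ with $v=f$ on the common boundary portion, together with the quantitative approximation
\begin{equation*}
\frac{1}{r}\left(\fint_{D_r}|u_\e-v|^2\right)^{1/2} \le C\,[\omega_{k,\sigma}(\e/r)]^{1/2}\,\Phi(2r).
\end{equation*}
The same estimate on the concentric smaller set $D_{\theta r}$ follows immediately from the trivial volume comparison $|D_r|/|D_{\theta r}| \le C\theta^{-d}$; hence
\begin{equation*}
\frac{1}{\theta r}\left(\fint_{D_{\theta r}}|u_\e-v|^2\right)^{1/2} \le C\theta^{-1-d/2}\cdot\frac{1}{r}\left(\fint_{D_r}|u_\e-v|^2\right)^{1/2} \le C\,[\omega_{k,\sigma}(\e/r)]^{1/2}\,\Phi(2r).
\end{equation*}

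Next I would exploit a crucial structural feature of the functional $H(t;\cdot)$: when one compares $H(t;u_\e)$ with $H(t;v)$ for the \emph{same} choice of affine $Px+q$, every term except the $L^2$-term $\tfrac{1}{t}(\fint_{D_t}|u-Px-q|^2)^{1/2}$ is identical, since $u_\e$ and $v$ share both the right-hand side $F$ and the Dirichlet datum $f$. The triangle inequality in $L^2$ therefore gives, for every admissible $(P,q)$ and every scale $t$,
\begin{equation*}
\frac{1}{t}\left(\fint_{D_t}|u_\e-Px-q|^2\right)^{1/2} \le \frac{1}{t}\left(\fint_{D_t}|v-Px-q|^2\right)^{1/2} + \frac{1}{t}\left(\fint_{D_t}|u_\e-v|^2\right)^{1/2},
\end{equation*}
and taking the infimum over $(P,q)$ yields $H(t;u_\e)\le H(t;v)+\tfrac{1}{t}(\fint_{D_t}|u_\e-v|^2)^{1/2}$, with the analogous inequality for $H(t;v)$. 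Combining these two one-sided bounds at $t=\theta r$ and $t=r$ with the approximation estimates above, and sandwiching via Lemma \ref{lem_Htheta} applied at scale $r$ to $v$, I get
\begin{equation*}
H(\theta r;u_\e) \le H(\theta r;v) + C\,[\omega_{k,\sigma}(\e/r)]^{1/2}\Phi(2r) \le \tfrac{1}{2}H(r;v) + C\,[\omega_{k,\sigma}(\e/r)]^{1/2}\Phi(2r),
\end{equation*}
and then $\tfrac12 H(r;v)\le \tfrac12 H(r;u_\e)+C[\omega_{k,\sigma}(\e/r)]^{1/2}\Phi(2r)$, which is exactly the asserted flatness inequality.

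The main obstacle in carrying this out rigorously is a bookkeeping issue rather than a conceptual one: one must make sure that Lemma \ref{lem_Htheta} can legitimately be applied to the particular $v$ delivered by Lemma \ref{lem_ue_v} (its proof invokes a boundary $C^{1,\beta}$ estimate for $\cL_0$, which needs $v$ defined on a slightly inflated domain and uses an interior/boundary Schauder estimate), and that the inflated scale is still absorbed into the quantity $\Phi(2r)$ rather than forcing $\Phi(4r)$ into the final bound. This is a routine rescaling/covering argument using that $\omega_{k,\sigma}(\e/(2r))\le \omega_{k,\sigma}(\e/r)$ and that the bounded overlap of the balls involved only changes the constant $C$. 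Once this is in place, the rest of the argument is purely algebraic through the triangle inequality for the $L^2$ term in the definition of $H$.
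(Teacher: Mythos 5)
Your proposal is correct and follows essentially the same route as the paper: approximate $u_\e$ by the homogenized solution $v$ from Lemma \ref{lem_ue_v}, apply the flatness property of Lemma \ref{lem_Htheta} to $v$, and transfer back to $u_\e$ using that $H(t;\cdot)$ differs only in its $L^2$ term, so the triangle inequality gives $|H(t;u_\e)-H(t;v)|\le \frac{1}{t}\bigl(\fint_{D_t}|u_\e-v|^2\bigr)^{1/2}$. The domain-inflation bookkeeping you flag (ensuring $v$ solves the homogenized system on a slightly larger set so the boundary $C^{1,\beta}$ estimate applies) is real but handled implicitly in the paper, since the $v$ constructed in Lemma \ref{lem_ue_v} already lives on $D_t$ with $t\in[5r/4,3r/2]$.
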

\begin{proof}
	Fix $r\in [\e,1/2]$. Let $v$ be a solution of $\cL_0(v) + \lambda v= F$ in $D_r$ with $v = f$ on $\Delta_r$. Observe that
	\begin{align*}
	H(\theta r; u_\e) &\le  H(\theta r; v) + \frac{1}{\theta r}\left( \fint_{D_{\theta r}} |u_\e - v|^2 \right)^{1/2} \\
	&\le  \frac{1}{2} H(r; v) + \frac{1}{\theta r} \left( \fint_{D_{\theta r}} |u_\e - v|^2 \right)^{1/2}\\
	& \le  \frac{1}{2} H(r; v) + \frac{C}{ r} \left( \fint_{D_{r}} |u_\e - v|^2 \right)^{1/2} \\
	&\le  \frac{1}{2} H(r; v) + C[\omega_{k,\sigma}(\e/r)]^{1/2} \Phi(2r)
	\end{align*}
	where we have used Lemma \ref{lem_Htheta} for the second inequality and Lemma \ref{lem_ue_v} for the last inequality.
\end{proof}

\begin{lemma}\label{lem_iter}
	Let $H(t)$ and $h(t)$ be two nonnegative continuous functions on the interval $(0,1]$. Suppose that there exists a constant $C_0$ such that
	\begin{equation}\label{ineq_iter_c1}
	\max_{r\le t\le 2r} H(t) + \max_{r\le t,s \le 2r} |h(t) - h(s)| \le C_0 H(2r).
	\end{equation}
	for any $r\in [\e ,1/2]$. We further assume that
	\begin{equation}\label{ineq_iter_c2}
	H(\theta r) \le \frac{1}{2} H(r) + C_0 \eta(\e/r) \left\{H(2r) + h(2r)\right\},
	\end{equation}
	for any $r\in [\e,1/2]$, where $\theta\in (0,1/4)$ and $\eta$ is a nonnegative increasing function on $[0,1]$ satisfying $\eta(0) = 0$ and
	\begin{equation}\label{ineq_iter_c3}
	\int_0^1 \frac{\eta(t)}{t} dt < \infty.
	\end{equation}
	Then
	\begin{equation}
	\max_{\e \le r\le 1} \left\{ H(r) + h(r) \right\} \le C \left\{ H(1) + h(1) \right\},
	\end{equation}
	where $C$ depends only on $C_0,\theta$ and $\eta$.
\end{lemma}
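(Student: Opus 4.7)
The plan is to iterate condition (\ref{ineq_iter_c2}) along the geometric sequence of scales $r_k := \theta^k/2$ for $k = 0, 1, \dots, N$, where $N$ is the largest integer with $r_N \ge \e$. Abbreviating $I_k := H(r_k)$, $J_k := h(r_k)$, $G_k := I_k + J_k$, and $\eta_k := \eta(\e/r_k)$, the first step is to rewrite the right-hand side of (\ref{ineq_iter_c2}), which involves $H(2r_k)$ and $h(2r_k)$, in terms of quantities at the previous dyadic scale $r_{k-1}$. Since $\theta < 1/4$ gives $r_{k-1} = r_k/\theta > 4r_k > 2r_k$, iterating (\ref{ineq_iter_c1}) over the bounded number of doublings separating $2r_k$ from $r_{k-1}$ produces constants $C_1, C_1'$ (depending only on $C_0$ and $\theta$) with
\begin{equation*}
H(2r_k) + h(2r_k) \le C_1 G_{k-1}, \qquad |J_k - J_{k-1}| \le C_1' I_{k-1}.
\end{equation*}
Feeding the first into (\ref{ineq_iter_c2}) produces the coupled discrete system
\begin{equation*}
I_{k+1} \le \tfrac{1}{2} I_k + C_2 \eta_k G_{k-1}, \qquad J_{k+1} \le J_k + C_1' I_k, \qquad C_2 := C_0 C_1.
\end{equation*}

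The heart of the argument is to decouple these two recursions. I would introduce the weighted quantity $L_k := I_k + \alpha J_k$ with the sharp choice $\alpha := 1/(2 C_1')$, so that the $\tfrac{1}{2} I_k$ contraction in the $H$-step exactly cancels the extra $\alpha C_1' I_k = \tfrac{1}{2} I_k$ produced by one iteration of the $J$-step. A direct addition then yields the clean one-step estimate $L_{k+1} \le L_k + C_2 \eta_k G_{k-1}$, which telescopes to $L_K \le L_0 + C_2 \sum_{k=1}^{K-1} \eta_k G_{k-1}$. Using $J_k \ge 0$ and treating the two cases $\alpha \le 1$ and $\alpha > 1$ separately, one checks that $G_K \le \max(1,1/\alpha) L_K$; passing to $\Phi_K^* := \max_{0 \le k \le K} G_k$ produces the integral-type inequality
\begin{equation*}
\Phi_K^* \le A + B \sum_{k=1}^{K-1} \eta_k \Phi_{k-1}^*,
\end{equation*}
with $A \le C'(H(1) + h(1))$ and $B$ depending only on $C_0$ and $\theta$.

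The remaining step is a discrete Gronwall iteration. The Dini condition (\ref{ineq_iter_c3}) is equivalent, after a standard comparison of the integral with its values on the geometric sequence $\theta^j$, to $\sum_{k \ge 0} \eta_k \le C_\theta \int_0^1 \eta(t)/t\, dt < \infty$, a bound that is uniform in $\e$ (equivalently in $N$). Iterating the inequality above gives $\Phi_K^* \le A \prod_k (1 + B \eta_k) \le A \exp\bigl(B \sum_k \eta_k\bigr)$, hence $H(r_k) + h(r_k) \le C(H(1) + h(1))$ for every $r_k \ge \e$. A last application of (\ref{ineq_iter_c1}) transfers this bound from the discrete scales $r_k$ to an arbitrary $r \in [\e, 1]$ lying in the dyadic interval containing it, yielding the asserted uniform estimate.

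I expect the main obstacle to be the decoupling step. Naively summing the two recursions gives $G_{k+1} \le G_k + (C_1' - \tfrac{1}{2})_+ I_k + C_2 \eta_k G_{k-1}$, whose middle term is a genuine growth contribution whenever $C_1' > \tfrac{1}{2}$ and is not absorbable by the contraction factor alone. It is precisely the weighted choice $\alpha = 1/(2C_1')$ that kills this term and reduces the system to a one-sided telescoping inequality on $L_k$; without that cancellation the final Gronwall closure would fail to produce a bound independent of $\e$, so matching $\alpha$ to $C_1'$ with the sharp constant is essential.
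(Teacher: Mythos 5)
Your proof is correct: the dyadic reduction via \eqref{ineq_iter_c1}, the weighted combination $L_k = I_k + \alpha J_k$ with $\alpha C_1' = 1/2$ to absorb the growth of $h$ into the contraction of $H$, and the discrete Gronwall step using $\sum_k \eta(\e\theta^{-k}/2) \lesssim \int_0^1 \eta(t)\,dt/t$ together constitute a complete argument. The paper itself gives no proof but defers to \cite[Lemma 8.5]{Shen1}, and your argument is essentially the standard one used there (iterate along $\theta^k$, control $h$ by telescoping its increments against $H$, close with the Dini sum), so there is nothing substantive to add beyond noting that the $k=0$ step of your recursion simply reads off $H(2r_0)+h(2r_0)=H(1)+h(1)$ rather than a fictitious $G_{-1}$.
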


This lemma was proved in \cite[Lemma 8.5]{Shen1}, where the Dini-type condition (\ref{ineq_Dini_Intro}) is involved.

\begin{proof}[Proof of Theorem \ref{thm_Lip_DP}]
	We assume that $0<\e < 1/4$ and let $u_\e$ define on $D_2$ as before. For $r\in (0,1)$ and $t\in (r,2r)$, it is easy to see that $H(t;u_\e) \le C H(2r;u_\e)$.
	
	Next, we let $h(r) = |P_r|$, where $P_r$ is the $d\times d$ matrix such that
	\begin{equation}
	\begin{aligned}
	H(r;u_\e) & = \frac{1}{r} \inf_{q\in \R^d} \Bigg\{ \left( \fint_{D_r} |u_\e -P_r x - q|^2 \right)^{1/2} + r^2 \left( \fint_{D_r} |F|^p \right)^{1/p} \\
	&\qquad + r^2 \lambda \norm{P_r x+ q}_{L^\infty(D_{r})} + \norm{f -P_r x-q}_{L^\infty(\Delta_{r})} \\
	&\qquad + r \norm{\nabla_{\tan} (f - P_r x) }_{L^\infty(\Delta_r)}  + r^{1+\tau} \norm{\nabla_{\tan} (f - P_r x) }_{C^\tau(\Delta_r)} \Bigg\},
	\end{aligned}
	\end{equation}
	Let $t,s\in [r,2r]$. Using
	\begin{align*}
	|P_t - P_s| &\le \frac{C}{r} \inf_{q\in \R^d} \left( \fint_{D_r} |(P_t-P_s)x - q|^2 \right)^{1/2} \\
	& \le \frac{C}{t} \inf_{q\in \R^d} \left( \fint_{D_t} |u_\e - P_tx - q|^2 \right)^{1/2} + \frac{C}{s} \inf_{q\in \R^d} \left( \fint_{D_s} |u_\e - P_sx - q|^2 \right)^{1/2} \\
	& \le CH(t;u_\e) + CH(s;u_\e);\\
	& \le CH(2r;u_\e).
	\end{align*}
	Thus, we obtain
	\begin{equation}
	\max_{r\le t,s \le 2r} |h(t) - h(s)| \le C H(2r;u_\e).
	\end{equation}
	Furthermore, by the definition of $\Phi$ and $H$,
	\begin{equation}
	\Phi(2r) \le H(2r;u_\e) + h(2r).
	\end{equation}
	In view of Lemma \ref{lem_HPhi}, we have
	\begin{equation}
	H(\theta r; u_\e) \le \frac{1}{2} H(r;u_\e) + C [\omega_{k,\sigma}(\e/r)]^{1/2} \{ H(2r;u_\e) + h(2r) \},
	\end{equation}
	for all $r\in [\e ,1/2]$. Note that the function $H(r) = H(r;u_\e)$ and $h(r)$ satisfies the conditions (\ref{ineq_iter_c1}), (\ref{ineq_iter_c2}) and (\ref{ineq_iter_c3}). Then by Lemma \ref{lem_iter}, we obtain that for all $r \in [\e,1/2]$,
	\begin{align*}
	\inf_{q\in \R^d} \frac{1}{r} \left( \fint_{D_r} |u_\e - q|^2 \right)^{1/2} &\le C\{ H(r;u_\e) + h(r) \} \\
	& \le C\{ H(1;u_\e) + h(1)\}\\
	& \le C\left\{ \left( \fint_{D_1} |u_\e|^2 \right)^{1/2} + \norm{f}_{C^{1,\tau}(\Delta_1)} + \norm{F}_{L^p(D_1)} \right\}\\
	& \le C\left\{ \left( \fint_{D_1} |\nabla u_\e|^2 \right)^{1/2} + \norm{f}_{C^{1,\tau}(\Delta_1)} + \norm{F}_{L^p(D_1)} \right\},
	\end{align*}
	where we have used the Poincar\'{e} inequality and the fact $u_\e = f$ on $\Delta_1$ in the last inequality. This, together with the Caccioppoli inequality, gives the estimate (\ref{ineq_DP_Lip}).
\end{proof}

\begin{remark}
	It is obvious to see that the argument above for the large scale boundary Lipschitz estimate also works for the interior Lipschitz estimate; see \cite[Theorem 11.1]{ShenZhuge2} for another proof. Indeed, we are able to establish
	\begin{equation}\label{ineq_int_Lip}
	\left( \fint_{B_r} |\nabla u_\e|^2 \right)^{1/2} \le C\left\{ \left( \fint_{B_1} |\nabla u_\e|^2 \right)^{1/2} + \norm{F}_{L^p(B_1)} \right\},
	\end{equation}
	where $u_\e$ is a solution for $\cL_\e u_\e + \lambda u_\e = F$ in $B_2$.
\end{remark}

\begin{remark}\label{rmk_Lip}
	As we have mentioned in the Introduction, under the additional condition of smoothness on the coefficients, the full uniform boundary Lipschitz estimate follows from Theorem \ref{thm_Lip_DP} and a blow-up argument. In fact, it is sufficient to assume $A$ is H\"{o}lder continuous, i.e., there exist $\delta>0$ and $C$ such that
	\begin{equation}\label{ineq_A_holder}
	|A(x) - A(y)| \le C|x-y|^\delta,
	\end{equation}
	for all $x,y\in \R^d$. 
	
	Now we would like to give the details of the blow-up argument. Let $u_\e$ be as before. Set $\tilde{u}(x) = \e^{-1} u_\e(x/\e)$, then $\tilde{u}$ satisfies
	\begin{equation}
	- \text{div} (A(x)\nabla \tilde{u}(x)) = F_\e(x) \quad \text{in } \tilde{D}_1, \qquad \text{and} \qquad \tilde{u} = f_\e \quad \text{on } \tilde{\Delta}_1,
	\end{equation}
	where $F_\e(x) = \e F(\e x)$ and $f_\e(x) = \e^{-1}f(\e x)$ and
	\begin{equation}
	\begin{aligned}
	\tilde{D}_1 &= \left\{  (x',x_d)\in \R^d: |x'|<1 \text{ and } \e^{-1} \phi(\e x') < x_d < \e^{-1}\phi(\e x') + 1 \right\}, \\
	\tilde{\Delta}_1 &= \left\{  (x',x_d)\in \R^d: |x'|<1 \text{ and } x_d = \e^{-1} \phi(\e x') \right\}. \\
	\end{aligned}
	\end{equation}
	Recall that $\phi(0) = 0$ and $\norm{\nabla \phi}_{C^\tau(\R^{d-1})} \le M$. Let $\phi_\e(x) = \e^{-1}\phi(\e x)$. Then we also have $\phi_\e(0) = 0$ and $\norm{\nabla \phi_\e}_{C^\tau(B(0,1))} \le M$. Without loss of generality, we can also assume $f(0) = 0$ by subtracting a constant from the solution since we are only concerned with the magnitude of the gradient. So we have $\norm{f_\e}_{C^{1,\alpha}(\tilde{\Delta}_1)} \le \norm{f}_{C^{1,\alpha}(\Delta_1)}$. Moreover, it is clear that $\norm{F_\e}_{L^p(\tilde{D}_1)} \le \norm{F}_{L^p(D_1)}$ for $p> d$. If $A$ satisfies (\ref{ineq_A_holder}), then we can apply the Lipschitz estimate (or $C^{1,\alpha}$ estimate) for $\tilde{u}$ and obtain
	\begin{align*}
	|\nabla \tilde{u}(0)| &\le C\left\{ \left( \fint_{\tilde{D}_1} |\nabla \tilde{u}|^2 \right)^{1/2} + \norm{f_\e}_{C^{1,\tau}(\tilde{\Delta}_1)} + \norm{F_\e}_{L^p(\tilde{D}_1)} \right\} \\
	& \le C\left\{ \left( \fint_{D_\e} |\nabla u_\e|^2 \right)^{1/2} + \norm{f}_{C^{1,\tau}(\Delta_1)} + \norm{F}_{L^p(D_1)} \right\}.
	\end{align*}
	
	Now noting that $\nabla u_\e(0) = \nabla \tilde{u}(0)$ and combining the last inequality with Theorem \ref{thm_Lip_DP}, we obtain
	\begin{equation}
	|\nabla u_\e(0)| \le C \left\{ \left( \fint_{D_1} |\nabla u_\e|^2 \right)^{1/2} + \norm{f}_{C^{1,\tau}(\Delta_1)} + \norm{F}_{L^p(D_1)} \right\}.
	\end{equation}
	Observe that this argument works equally well for the points whose distance from boundary is less than $\e$. On the other hand, for those points far away from boundary, we can combine the large scale interior Lipschitz estimate (\ref{ineq_int_Lip}) and the blow-up argument to obtain the full uniform Lipschitz estimate. As a consequence, we obtain the following.
\end{remark}

\begin{theorem}[Global Lipschitz estimate for DP]
	Let $\Omega$ be a bounded $C^{1,\alpha}$ domain. Suppose that $A\in APW^2(\R^d)$ satisfies the ellipticity condition (\ref{def_ellipticity}) and H\"{o}lder continuity (\ref{ineq_A_holder}). Moreover, $\omega_{k,\sigma}$ obeys the Dini-type condition (\ref{ineq_Dini_Intro}) for some $\sigma \in (0,1), k\ge 1$. If $u_\e$ is the weak solution of (\ref{def_DP}) with $F\in L^p(\Omega), p>d$ and $f\in C^{1,\tau}(\partial\Omega), \tau > 0$, then
	\begin{equation*}
	\norm{\nabla u_\e}_{L^\infty(\Omega)} \le C \left\{ \norm{f}_{C^{1,\tau}(\partial\Omega)} + \norm{F}_{L^p(\Omega)} \right\},
	\end{equation*}
	where the constant is independent of $\e$.
\end{theorem}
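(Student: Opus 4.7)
The plan is a standard blow-up/rescaling argument that combines the large scale boundary Lipschitz estimate of Theorem~\ref{thm_Lip_DP}, its interior counterpart \eqref{ineq_int_Lip}, and the classical $C^{1,\tau}$ Schauder estimate for elliptic systems with H\"older continuous coefficients. The key is a scale separation: for radii $r\ge\e$ the homogenization estimates already give what we need, while for $r<\e$ the coefficients $A(\cdot/\e)$ look H\"older continuous at a fixed scale after rescaling by $\e^{-1}$, so the classical theory applies uniformly in $\e$.

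First, I would fix a boundary point $x_0\in\partial\Omega$ and, up to translation and rotation, assume $x_0=0$ with $\partial\Omega$ locally given as the graph of a $C^{1,\alpha}$ function $\phi$. Following the computation spelled out in Remark~\ref{rmk_Lip}, I set $\tilde{u}(x)=\e^{-1}u_\e(\e x)$, so $\tilde u$ solves $-\text{div}(A(x)\nabla\tilde u)+\e^2\lambda\,\tilde u=F_\e$ in the rescaled domain $\tilde D_1$ with $\tilde u=f_\e$ on $\tilde\Delta_1$, where $F_\e(x)=\e F(\e x)$ and $f_\e(x)=\e^{-1}f(\e x)$. Direct scaling then yields $\|\nabla\phi_\e\|_{C^\tau}\le\e^\tau M\le M$, $\|f_\e\|_{C^{1,\tau}(\tilde\Delta_1)}\le\|f\|_{C^{1,\tau}(\Delta_1)}$, and $\|F_\e\|_{L^p(\tilde D_1)}\le\|F\|_{L^p(D_1)}$ uniformly in $\e$, the last inequality using $p>d$. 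Since $A$ satisfies \eqref{ineq_A_holder} and $\tilde D_1$ has bounded $C^{1,\tau}$ character, the classical boundary $C^{1,\tau}$ Schauder estimate for second-order elliptic systems applies and gives
\begin{equation*}
|\nabla\tilde u(0)|\le C\Bigl\{\Bigl(\fint_{\tilde D_1}|\nabla\tilde u|^2\Bigr)^{1/2}+\|f_\e\|_{C^{1,\tau}(\tilde\Delta_1)}+\|F_\e\|_{L^p(\tilde D_1)}\Bigr\}.
\end{equation*}
Unwinding the rescaling, $|\nabla u_\e(0)|=|\nabla\tilde u(0)|$ and the $L^2$ average on the right equals $(\fint_{D_\e}|\nabla u_\e|^2)^{1/2}$.

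At this point I would invoke Theorem~\ref{thm_Lip_DP} at scale $r=\e$ to dominate that average by the fixed-scale quantity $(\fint_{D_1}|\nabla u_\e|^2)^{1/2}+\|f\|_{C^{1,\tau}(\Delta_1)}+\|F\|_{L^p(D_1)}$; the first term is in turn controlled by $\|f\|_{C^{1,\tau}(\partial\Omega)}+\|F\|_{L^p(\Omega)}$ via the global energy estimate \eqref{ineq_energy} and the embedding $L^p\hookrightarrow L^2$. This handles every point within distance $\e$ of $\partial\Omega$. For points $x_0\in\Omega$ with $\text{dist}(x_0,\partial\Omega)\ge\e$, the same scheme using the large-scale interior estimate \eqref{ineq_int_Lip} in place of Theorem~\ref{thm_Lip_DP} and the interior Schauder estimate in place of the boundary one yields the analogous bound. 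Taking the supremum over $x_0\in\overline{\Omega}$ produces the desired global estimate.

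The work here is not conceptual but bookkeeping: verifying that every rescaled quantity---the domain's $C^{1,\tau}$ character, the boundary data $f_\e$, and the right-hand side $F_\e$---stays uniformly bounded in $\e$ so that the fixed-coefficient Schauder estimate can be applied with an $\e$-independent constant. The three hypotheses $\alpha>0$, $\tau>0$, and $p>d$ are each what guarantees one of those scalings, which is precisely why they appear in the statement.
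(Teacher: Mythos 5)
Your argument is exactly the paper's proof: Remark \ref{rmk_Lip} carries out the same blow-up $\tilde u(x)=\e^{-1}u_\e(\e x)$, checks the same uniform bounds on $\phi_\e$, $f_\e$, $F_\e$, applies the classical boundary Schauder estimate, and then invokes Theorem \ref{thm_Lip_DP} at scale $r=\e$ (and the interior estimate (\ref{ineq_int_Lip}) for points at distance $\ge\e$ from $\partial\Omega$). The only detail worth adding is that the bound $\norm{f_\e}_{C^{1,\tau}}\le\norm{f}_{C^{1,\tau}}$ requires first normalizing $f(0)=0$ by subtracting a constant from the solution, which is harmless since only $\nabla u_\e$ is estimated.
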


Finally, we mention in advance that we should be able to obtain the full Lipschitz estimate for Neumann problems, as well as full H\"{o}lder estimates (Section 5.3) for both Dirichlet and Neumann problems, by the same blow-up argument. The details are left to the readers. (For H\"{o}lder estimates, it is sufficient to assume that $A$ belongs to VMO space \cite{Shen1}.)

\subsection{Neumann boundary value problems}
Actually, Neumann problems are treated analogously as Dirichlet problems. All the lemmas and results are parallel to those proved for Dirichlet problems. For this reason, we will just list all the lemmas needed as a sketch of the proof and omit all the technical details. Throughout this subsection, we let $u_\e \in H^1(D_2;\R^d)$ be a weak solution of $\cL_\e(u_\e) + \lambda u_\e= F$ in $D_2$ with $\partial u_\e/\partial \nu_\e = g$ on $\Delta_2$. Define the following auxiliary quantities:
\begin{equation}
\begin{aligned}
\Psi(t) = & \frac{1}{t} \inf_{q\in \R^d} \Bigg\{ \left( \fint_{D_t} |u_\e - q|^2 \right)^{1/2} + t^2 \left( \fint_{D_t} |F|^p \right)^{1/p} \\
&\qquad +  t^2\lambda |q| + t \norm{g}_{L^\infty(\Delta_t)}\Bigg\},
\end{aligned}
\end{equation}
and
\begin{equation}
\begin{aligned}
J(t;u) & = \frac{1}{t} \inf_{\substack{P\in \R^{d\times d} \\q\in \R^d}} \Bigg\{ \left( \fint_{D_t} |u -Px - q|^2 \right)^{1/2} + t^2 \left( \fint_{D_t} |F|^p \right)^{1/p}\\
&\qquad  + t^2 \lambda \norm{Px+q}_{L^\infty(D_t)} + t \Norm{g - \frac{\partial}{\partial \nu_0}(Px) }_{L^\infty(\Delta_t)} \\
&\qquad + t^{1+\tau} \Norm{g - \frac{\partial}{\partial \nu_0}(Px) }_{C^\tau(\Delta_t)} \Bigg\},
\end{aligned}
\end{equation}
where $p>d$ and $\tau\in (0,\alpha)$.

\begin{lemma}\label{lem_Neu_rate}
	Let $\e \le r\le 1$. There exists $w\in H^1(D_r;\R^d)$ such that $\cL_0 (w) + \lambda w = F$ in $D_r$, $\partial w/\partial \nu_0 =g$ on $\Delta_r$ and
	\begin{equation}\label{ineq_ue_flatN}
	\left( \fint_{D_r} |u_\e - w|^2\right)^{1/2} \le C[\omega_{k,\sigma}(\e/r)]^{1/2} \Psi(2r),
	\end{equation}
	where $C$ depends only on $A,\sigma$ and $M$.
\end{lemma}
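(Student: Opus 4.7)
The plan is to mirror Lemma~\ref{lem_ue_v} step-by-step, substituting co-normal matching on the interior cut for Dirichlet-trace matching, and invoking the Neumann-side $L^2$ convergence rate (\ref{ineq_L2N_Lip}) in place of (\ref{ineq_L2_Lip}). By rescaling $x\mapsto rx$ I reduce to $r=1$; the normalizations in $\Psi$ are consistent with this scaling, so the resulting constant will be $r$-independent. For any $q\in\R^d$, I would first run a Caccioppoli estimate on the Neumann problem: testing $\cL_\e u_\e+\lambda u_\e=F$ against $\eta^2(u_\e-q)$ with $\eta$ a cut-off that equals $1$ on $D_{3/2}$ and vanishes on $\partial D_2\setminus\Delta_2$ (so that the only surface integral lives on $\Delta_2$, where $g$ appears) yields
\begin{equation*}
\int_{D_{3/2}}|\nabla u_\e|^2 \le C\Big\{\int_{D_2}|u_\e-q|^2 + \int_{D_2}|F|^2+\norm{g}_{L^\infty(\Delta_2)}^2+\lambda^2|q|^2\Big\}.
\end{equation*}
The coarea formula then produces some $t\in[5/4,3/2]$ for which $\int_{\partial D_t\setminus\Delta_2}(|\nabla u_\e|^2+|u_\e-q|^2)$ is dominated by the same right-hand side.

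Next I would glue a Neumann datum: define $\tilde g$ on $\partial D_t$ by $\tilde g=g$ on $\Delta_t$ and $\tilde g=A(\cdot/\e)\nabla u_\e\cdot n$ on $\partial D_t\setminus\Delta_t$, and let $w\in H^1(D_t;\R^m)$ solve
\begin{equation*}
\cL_0 w+\lambda w=F\text{ in }D_t,\qquad \partial w/\partial\nu_0=\tilde g\text{ on }\partial D_t.
\end{equation*}
When $\lambda>0$ the problem is coercive and $w$ exists uniquely; when $\lambda=0$ the required flux-balance compatibility $\int_{\partial D_t}\tilde g=-\int_{D_t}F$ is inherited automatically from the fact that $u_\e$ itself is a solution of $\cL_\e u_\e=F$ on $D_t$ with flux $\tilde g$ across $\partial D_t$ (by definition of $\tilde g$ on the interior piece). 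By design, $u_\e|_{D_t}$ and $w$ solve \emph{the same} Neumann boundary-value problem for the operators $\cL_\e+\lambda$ and $\cL_0+\lambda$ respectively, with common data $(F,\tilde g)$; passing from $(u_\e,w)$ to $(u_\e-q,w-q)$ merely replaces $F$ by $F-\lambda q$. Hence (\ref{ineq_L2N_Lip}) applied on the Lipschitz domain $D_t$ gives
\begin{equation*}
\norm{u_\e-w}_{L^2(D_t)}\le C[\omega_{k,\sigma}(\e)]^{1/2}\Big\{\norm{F}_{L^2(D_2)}+\lambda|q|+\norm{\tilde g}_{L^2(\partial D_t)}\Big\}.
\end{equation*}

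Combining $\norm{\tilde g}_{L^2(\partial D_t)}\le C\{\norm{g}_{L^\infty(\Delta_2)}+\norm{\nabla u_\e}_{L^2(\partial D_t\setminus\Delta_t)}\}$ with the coarea bound from Step~1 and the Caccioppoli estimate absorbs the interior-flux term into $\norm{u_\e-q}_{L^2(D_2)}+\norm{F}_{L^p(D_2)}+\lambda|q|+\norm{g}_{L^\infty(\Delta_2)}$ (using $\norm{F}_{L^2(D_2)}\le C\norm{F}_{L^p(D_2)}$ since $p>d\ge 2$ and $D_2$ is bounded). Taking the infimum over $q\in\R^d$ reproduces exactly $\Psi(2)$, and restricting to $D_1\subset D_t$ yields (\ref{ineq_ue_flatN}). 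The principal technical obstacle is the construction of $\tilde g$ on the interior piece $\partial D_t\setminus\Delta_t$: this datum must simultaneously (i) ensure flux-balance compatibility when $\lambda=0$, (ii) be controllable in $L^2$ by the boundary/source data of the original problem, and (iii) be common to both the $\e$-problem and the limiting problem so that the Lipschitz-domain rate (\ref{ineq_L2N_Lip}) is directly applicable. The coarea selection of $t$ together with the tautological observation that $u_\e$ itself realizes the required flux make all three hold at once.
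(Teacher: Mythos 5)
Your proposal is correct and follows essentially the same route as the paper, which simply invokes ``(\ref{ineq_L2N_Lip}) and the same argument as Lemma \ref{lem_ue_v}'': rescale to $r=1$, use Caccioppoli plus the coarea formula to select a good slice $D_t$, pose the homogenized problem on $D_t$ with the flux of $u_\e$ as common Neumann data, and apply the Lipschitz-domain rate. Your explicit verification of the flux-balance compatibility when $\lambda=0$ and of the $L^2$ control of $\tilde g$ on the interior cut fills in exactly the details the paper leaves implicit.
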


\begin{proof}
	The lemma follows from (\ref{ineq_L2N_Lip}) and the same argument as Lemma \ref{lem_ue_v}.
\end{proof}

\begin{lemma}\label{lem_Neu_L0}
	Let $w \in H^1(D_2;\R^d)$ be a weak solution of $\cL_0(w) + \lambda w = F$ in $D_2$ with $\partial w/\partial \nu_0 = g$ on $\Delta_2$. Then there exists $\theta\in (0,1/4)$, depending only on $p,A,\tau,\alpha$ and $M$, such that
	\begin{equation}
	J(\theta r; w) \le \frac{1}{2} J(r; w).
	\end{equation}
\end{lemma}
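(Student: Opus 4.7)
The plan is to mirror the proof of Lemma \ref{lem_Htheta} (the Dirichlet analog), substituting the boundary $C^{1,\beta}$ estimate for the Dirichlet problem with the corresponding one for the Neumann problem with constant coefficients. By the standard rescaling $w(x) \mapsto r^{-1} w(rx)$ (which preserves the structure of the equation with $\lambda$ replaced by $r^2 \lambda \in [0,1]$, and the conormal data rescaled appropriately), it suffices to prove the statement for $r = 1$. The quantity $J(\theta; w)$ is estimated by taking $q = w(0)$ and $P = \nabla w(0)$ in its infimum, which reduces matters to controlling $\|w\|_{C^{1,\beta}(D_\theta)}$ with $\beta = \min\{\tau, (p-d)/p\}$; this in turn follows from the classical boundary $C^{1,\beta}$ regularity for elliptic systems with constant coefficients and Neumann data on $C^{1,\alpha}$ boundaries, applied to the equation $\cL_0(w) + \lambda w = F$ in $D_1$ with $\partial w/\partial \nu_0 = g$ on $\Delta_1$.

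Next, I would exploit the invariance of the class of affine functions under $\cL_0$: for any $P \in \R^{d\times d}$ and $q \in \R^d$, the function $Px + q$ satisfies $\cL_0(Px + q) = 0$, and its conormal derivative $\partial(Px)/\partial \nu_0$ is a well-defined bounded function on $\Delta_1$ depending only on $P$, $\widehat{A}$, and the (bounded) boundary geometry. Thus $\tilde{w} := w - Px - q$ solves
\begin{equation*}
\cL_0(\tilde{w}) + \lambda \tilde{w} = F - \lambda(Px + q) \quad \text{in } D_1, \qquad \frac{\partial \tilde{w}}{\partial \nu_0} = g - \frac{\partial(Px)}{\partial \nu_0} \quad \text{on } \Delta_1.
\end{equation*}
Applying the boundary $C^{1,\beta}$ estimate to $\tilde{w}$ and using the definition of $J$, one obtains
\begin{equation*}
J(\theta; \tilde{w}) \le C\theta^\beta \bigl\{ \text{(terms appearing in the infimum defining } J(1; w)\text{)} \bigr\}.
\end{equation*}

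By the triangle inequality (and since the $L^\infty$ contribution of $Px + q$ on $D_\theta$ is dominated by that on $D_1$ up to a universal constant), $J(\theta; w) \le J(\theta; \tilde{w}) + C\theta \lambda \|Px + q\|_{L^\infty(D_1)}$, so that absorbing the $\lambda$ term into the right-hand side and then taking the infimum over all $P \in \R^{d\times d}$ and $q \in \R^d$ yields $J(\theta; w) \le C \theta^\beta J(1; w)$. Fixing $\theta \in (0, 1/4)$ small enough that $C\theta^\beta \le 1/2$ completes the proof.

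The only mildly delicate point—essentially the analog of the $\nabla_{\text{tan}} f$ Hölder term in the Dirichlet case—is confirming that the $C^\tau(\Delta_1)$ norm of $g - \partial(Px)/\partial \nu_0$ is controlled appropriately. Since the boundary is $C^{1,\alpha}$ with $\tau < \alpha$ and the conormal coefficients of $\widehat{A}$ are constant, $\partial(Px)/\partial \nu_0$ inherits a $C^\alpha$ bound from the normal vector field on $\Delta_1$, so this term contributes only a multiplicative constant depending on $M$ and $\widehat{A}$. Beyond this bookkeeping, no new ingredient beyond the constant-coefficient Neumann boundary regularity is required, so I do not expect a substantive obstacle.
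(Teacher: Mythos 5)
Your proposal is correct and follows essentially the same route as the paper, which itself disposes of this lemma in one line by invoking the boundary $C^{1,\alpha}$ estimate for constant-coefficient Neumann problems and repeating the argument of Lemma \ref{lem_Htheta}. The extra care you take with the $C^\tau$ control of $g - \partial(Px)/\partial\nu_0$ via the $C^\alpha$ regularity of the conormal on a $C^{1,\alpha}$ boundary is exactly the bookkeeping the paper leaves implicit.
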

\begin{proof}
	This lemma follows from the boundary $C^{1,\alpha}$ estimate for Neumann problems and the similar argument as Lemma \ref{lem_Htheta}.
\end{proof}

\begin{lemma}\label{lem_Neu_Le}
	Let $0<\e<1/2$, then there exists a $\theta\in (0,1/4)$ such that for any $r\in [\e,1/2]$,
	\begin{equation}
	J(\theta r; u_\e) \le \frac{1}{2} J(r;u_\e) + C [\omega_{k,\sigma}(\e/r)]^{1/2} \Psi(2r),
	\end{equation}
	where $C$ depends only on $p,A,\alpha,\tau,\delta$ and $M$.
\end{lemma}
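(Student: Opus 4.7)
The plan is to transport the proof of Lemma \ref{lem_HPhi} verbatim into the Neumann setting, substituting Lemma \ref{lem_Neu_rate} for Lemma \ref{lem_ue_v} and Lemma \ref{lem_Neu_L0} for Lemma \ref{lem_Htheta}. I first fix $r\in[\e,1/2]$ and invoke Lemma \ref{lem_Neu_rate} to produce $w\in H^1(D_r;\R^d)$ solving $\cL_0 w + \lambda w = F$ in $D_r$ with $\partial w/\partial\nu_0 = g$ on $\Delta_r$, together with the quantitative $L^2$ closeness to $u_\e$.

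The key observation is that $J(t;\cdot)$ depends on its function argument only through the term $t^{-1}(\fint_{D_t}|u - Px - q|^2)^{1/2}$; the remaining ingredients in the infimum (the $L^p$-norm of $F$, the $\lambda\|Px+q\|$ piece, and the $L^\infty$ and $C^\tau$ norms of $g-\partial(Px)/\partial\nu_0$) depend only on $(P,q)$ and the fixed data $F,g$, which $u_\e$ and $w$ share. Hence for every $t\le r$ one obtains the one-sided comparison
\begin{equation*}
|J(t;u_\e) - J(t;w)| \le \frac{1}{t}\left(\fint_{D_t}|u_\e - w|^2\right)^{1/2}.
\end{equation*}

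Applying this at $t=\theta r$ and at $t=r$, together with the flatness estimate $J(\theta r;w)\le \tfrac12 J(r;w)$ from Lemma \ref{lem_Neu_L0} and the trivial measure comparison $|D_r|/|D_{\theta r}|\lesssim \theta^{-d}$ (to pass from an average over $D_{\theta r}$ to one over $D_r$), I would chain
\begin{align*}
J(\theta r; u_\e) & \le J(\theta r; w) + \frac{1}{\theta r}\left(\fint_{D_{\theta r}}|u_\e - w|^2\right)^{1/2} \\
& \le \frac{1}{2} J(r; w) + \frac{C}{r}\left(\fint_{D_r}|u_\e - w|^2\right)^{1/2} \\
& \le \frac{1}{2} J(r; u_\e) + \frac{C}{r}\left(\fint_{D_r}|u_\e - w|^2\right)^{1/2}.
\end{align*}
Closing the loop with Lemma \ref{lem_Neu_rate} bounds the remaining $L^2$ error by $C[\omega_{k,\sigma}(\e/r)]^{1/2}\Psi(2r)$, which yields the desired estimate, with $\theta$ inherited from Lemma \ref{lem_Neu_L0}.

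There is no real obstacle here beyond verifying the ``linear-in-data'' property of $J$ used above, which is the only point at which the Neumann and Dirichlet arguments differ in content: in the Dirichlet case the corresponding comparison for $H$ used that $u_\e$ and $v$ share the boundary trace $f$, whereas here we instead use that $u_\e$ and $w$ share the conormal derivative $g$. In both cases the outcome is that the substitution costs only the interior $L^2$ error, which is exactly the quantity controlled by the convergence rate.
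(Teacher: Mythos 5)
Your proof is correct and is exactly the argument the paper intends: the paper's own proof of this lemma consists of the single remark that it ``follows from the same lines as Lemma \ref{lem_HPhi}, by combining Lemma \ref{lem_Neu_rate} and \ref{lem_Neu_L0},'' which is precisely the chain you wrote out, including the key observation that $u_\e$ and $w$ share the data $(F,g)$ so that $J$ differs only through the $L^2$ term. (You implicitly read Lemma \ref{lem_Neu_rate} with the $1/r$ normalization matching Lemma \ref{lem_ue_v}, which is clearly what is meant there.)
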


\begin{proof}
	The lemma follows from the same lines as Lemma \ref{lem_HPhi}, by combining Lemma \ref{lem_Neu_rate} and \ref{lem_Neu_L0}.
\end{proof}

\begin{proof}[Proof of Theorem \ref{thm_Lip_NP}]
	With Lemma \ref{lem_Neu_Le} at our disposal, (\ref{ineq_NP_Lip}) follows from Lemma \ref{lem_iter}, as in the case of Dirichlet boundary conditions. We omit the details.
\end{proof}

\section{Applications}
\subsection{Improved estimates of approximate correctors} 
In \cite{ShenZhuge2}, we obtained the interior uniform H\"{o}lder continuity of the system
\begin{equation}
-\text{div}(A(x/\e) \nabla u_\e) + \lambda u_\e = F + \text{div} f,
\end{equation}
down to the scale $\e$, by a compactness argument. Based on that we were able to establish the estimates (\ref{ineq_dchiT_1}) for the approximate correctors for any $\sigma>0$. However, to recover the end-point case $\sigma =0$, we have to employ the interior Lipschitz estimate under the Dini-type condition (\ref{ineq_Dini_Intro}). Actually, this has been shown in \cite{ShenZhuge2} and here we will give a slightly different approach, based on our new version of Lipschitz estimate with $\lambda = 1$, to obtain the same estimate for $\sigma = 0$.

\begin{theorem}\label{thm_dchiT_S1}
	Suppose that $A\in APW^2(\R^d)$ satisfies the ellipticity condition (\ref{def_ellipticity}) and for some $\sigma \in (0,1)$ and $k\ge 1$, $\omega_{k,\sigma}$ satisfies the Dini-type condition (\ref{ineq_Dini_Intro}). Then,
	\begin{equation}\label{ineq_dchiT_imp}
	\norm{\nabla \chi_T}_{S_1^2} \le C,
	\end{equation}
	where the constant $C$ depends only on $\sigma,k$ and $A$. 
\end{theorem}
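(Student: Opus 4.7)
The plan is to reduce the uniform bound on $\|\nabla \chi_T\|_{S_1^2}$ to the interior Lipschitz estimate (\ref{ineq_int_Lip}) for the rescaled operator $\cL_\e + 1$, whose hypothesis is exactly the Dini-type condition (\ref{ineq_Dini_Intro}). Set $\e = T^{-1}$ and, for fixed $1\le j\le d$ and $1\le \beta\le m$, introduce
\begin{equation*}
u_\e(x) := \e\,\chi_{T,j}^\beta(x/\e) + x_j e^\beta.
\end{equation*}
A direct substitution in (\ref{def_corrector}), together with the chain rule, will show that $u_\e$ satisfies $\cL_\e u_\e + u_\e = x_j e^\beta$ in $\R^d$; the point of the construction is that the mass term $T^{-2}\chi_T$ is converted, after the $\e$-rescaling, precisely into a zeroth-order term with coefficient $\lambda = 1 \in [0,1]$, putting us in the regime where (\ref{ineq_int_Lip}) applies.

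Since $F(x) = x_j e^\beta$ grows linearly, I would fix an arbitrary $y_0 \in \R^d$, set $x_0 = \e y_0$, and replace $u_\e$ by $w_\e(x) := u_\e(x) - (x_0)_j e^\beta$. Then $\nabla w_\e = \nabla u_\e$, and $w_\e$ solves $\cL_\e w_\e + w_\e = (x-x_0)_j e^\beta$, whose right-hand side is bounded by $1$ on $B_1(x_0)$. Applying (\ref{ineq_int_Lip}) centered at $x_0$ at scale $r=\e$ yields
\begin{equation*}
\Bigl(\fint_{B_\e(x_0)}|\nabla u_\e|^2\Bigr)^{1/2} \le C\Bigl\{\Bigl(\fint_{B_1(x_0)}|\nabla u_\e|^2\Bigr)^{1/2} + 1\Bigr\}.
\end{equation*}
Changing variables $y = x/\e$ and using $\nabla u_\e(x) = (\nabla\chi_T)(x/\e) + e_j\otimes e^\beta$ translates this to
\begin{equation*}
\Bigl(\fint_{B_1(y_0)}|\nabla\chi_T|^2\Bigr)^{1/2} \le C\Bigl(\fint_{B_T(y_0)}|\nabla\chi_T|^2\Bigr)^{1/2} + C.
\end{equation*}

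Taking the supremum over $y_0 \in \R^d$ gives $\|\nabla\chi_T\|_{S_1^2} \le C\|\nabla\chi_T\|_{S_T^2} + C$, and it remains to control the right-hand side. A standard Caccioppoli argument applied to (\ref{def_corrector}) with a cutoff $\eta \in C_0^\infty(B(y_0, 2T))$ satisfying $\eta = 1$ on $B(y_0, T)$ and $|\nabla\eta| \le C/T$ does the job: the term $T^{-2}|\chi_T|^2\eta^2$ absorbs exactly the $|\chi_T|^2 |\nabla\eta|^2$ contribution from the test, yielding $\fint_{B(y_0,T)}|\nabla\chi_T|^2 \le C$ uniformly in $T$ and $y_0$, i.e., $\|\nabla\chi_T\|_{S_T^2}\le C$. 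The only subtlety in the whole scheme is verifying that the constant in (\ref{ineq_int_Lip}) is independent of the translation $x_0$ we use to center the ball; this is automatic because $APW^2(\R^d)$ and the quantity $\omega_{k,\sigma}$ are both translation-invariant, so the shifted coefficients $A(\cdot + x_0/\e)$ satisfy the Dini-type hypothesis with the same constant as $A$ itself. The main obstacle is therefore not analytical but organizational — choosing the right $u_\e$ (and the right affine adjustment $w_\e$) so that the mass term appears with $\lambda = 1$ and the forcing is bounded uniformly in the translation.
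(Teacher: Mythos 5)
Your proposal is correct and follows essentially the same route as the paper: rescale the approximate corrector equation so that the mass term $T^{-2}\chi_T$ becomes a zeroth-order term with $\lambda=1$ and the forcing is $O(1)$ after recentering at an arbitrary $x_0$, apply the large scale interior Lipschitz estimate at scale $r=\e$, and then control $\norm{\nabla\chi_T}_{S_T^2}$ by the energy bound for (\ref{def_corrector}). The one imprecision is your Caccioppoli step: the term $\int|\nabla\eta|^2|\chi_T|^2$ is supported in $B(y_0,2T)\setminus B(y_0,T)$, where $\eta$ is not bounded below, so it is \emph{not} absorbed ``exactly'' by $T^{-2}\int\eta^2|\chi_T|^2$; one needs a hole-filling iteration or, as the paper does, simply the bound $\norm{\nabla\chi_T}_{S_T^2}+T^{-1}\norm{\chi_T}_{S_T^2}\le C$ from \cite[Lemma 3.1]{ShenZhuge2}.
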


\begin{proof}
	Recall that $\chi_{T,j}^\beta$ satisfy the equations for approximate correctors (\ref{def_corrector}). Fix $x_0\in \R^d$, and let
	\begin{equation}
	u_\e(x)= T^{-2}\chi_{T,j}^\beta(Tx) + T P_j^\beta(x-x_0), \qquad T = \e^{-1},
	\end{equation}
	where $P_j^\beta$ is an affine function. Then $u_\e$ satisfies
	\begin{equation}\label{eq_chiT_var}
	-\text{div}(A(x/\e) \nabla u_\e) + u_\e =  \e P_j^\beta(x-x_0), \qquad x\in \R^d.
	\end{equation}
	Therefore, with the additional Dini-type condition on the convergence rate, we can apply the interior Lipschitz estimate to the system (\ref{eq_chiT_var}). It follows that
	\begin{equation*}
	\begin{aligned}
	\left( \fint_{B(x_0,\e)} |\nabla u_\e|^2 \right)^{1/2} & \le C \left( \fint_{B(x_0,1)} |\nabla u_\e|^2 \right)^{1/2} + C \e \left( \fint_{B(x_0,1)} |P_j^\beta(\cdot -x_0)|^p \right)^{1/p}\\
	& \le CT^{-1} \left( \fint_{B(x_0,T)} |\nabla \chi_{T,j}^\beta |^2 \right)^{1/2} + CT^{-1} \\
	& \le CT^{-1},
	\end{aligned}
	\end{equation*}
	where the last inequality follows from (\ref{def_corrector}) and \cite[Lemma 3.1]{ShenZhuge2}. Hence,
	\begin{equation}
	\left( \fint_{B(x_0,1)} |\nabla \chi_T|^2 \right)^{1/2} \le C.
	\end{equation}
	This implies (\ref{ineq_dchiT_imp}) since $x_0\in \R^d$ is arbitrary.
\end{proof}

\subsection{Rellich estimate in $L^2$}
The classical Rellich estimate for harmonic functions claims that $\norm{\nabla u}_{L^p(\partial\Omega)}$, $\norm{\nabla_{\tan} u}_{L^p(\partial\Omega)}$ and $\norm{\frac{\partial}{\partial \nu} u}_{L^p(\partial\Omega)}$ are comparable if they are bounded. These estimates are of importance since they usually imply the solvability of $L^p$ boundary value problems\cite{KS}. In this subsection, we will show the uniform Rellich estimate for $u_\e$ in $L^2$ at large scale in Lipschitz domains without any assumption of smoothness. For simplicity, we temporarily assume $F = 0$ and $\lambda = 0$.

First, we note that if $\Omega$ is a $C^{1,\alpha}$ domain and the conditions of Theorem \ref{thm_Lip_DP} are satisfied, then (\ref{ineq_DP_Lip}) implies
\begin{equation}\label{ineq_Rel_C11}
\left( \fint_{\Omega_r} |\nabla u_\e|^2 \right)^{1/2} \le C \norm{f}_{C^{1,\tau}(\partial\Omega)} ,
\end{equation}
for all $\e\le r\le \text{diam}(\Omega)$, where we also used a covering argument and the energy estimate for $\norm{\nabla u_\e}_{L^2(\Omega)}$. One can see that (\ref{ineq_Rel_C11}) gives a Rellich-type estimate under stronger conditions, which do not imply (\ref{ineq_RelDP}) in Lipschitz domains. However, by taking advantage of some ideas from the proof of uniform Lipschitz estimate, we can easily obtain the following.

\begin{theorem}\label{thm_Rellich_DP}
	Suppose that $A\in APW^2(\R^d)$ satisfies the ellipticity condition (\ref{def_ellipticity}) and $\Omega$ is a bounded Lipschitz domain. Let $u_\e$ be the weak solution of Dirichlet problem
	\begin{equation}
	\cL_\e(u_\e) = 0 \quad \text{in } \Omega, \qquad \text{and} \qquad u_\e =f \quad \text{on } \partial\Omega,
	\end{equation}
	where $f\in H^1(\partial\Omega)$. Then for any $\omega_{k,\sigma}(\e) \le r\le \text{diam}(\Omega)$,
	\begin{equation}\label{ineq_RelD}
	\left( \fint_{\Omega_r} |\nabla u_\e|^2 \right)^{1/2} \le C\norm{\nabla_{\tan} f}_{L^2(\partial\Omega)},
	\end{equation}
	where $C$ is independent of $\e$ and $r$.
\end{theorem}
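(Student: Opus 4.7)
The plan is to compare $u_\e$ with the homogenized solution $u_0$ (sharing the same boundary data $f$) and then exploit the classical $L^2$ Rellich estimate for the constant-coefficient operator $\cL_0$ in Lipschitz domains, due to Dahlberg, Kenig--Verchota, and Pipher. That estimate reads $\norm{(\nabla u_0)^*}_{L^2(\partial\Omega)} \le C\norm{\nabla_{\tan} f}_{L^2(\partial\Omega)}$, and integration along nontangential cones yields the strip bound $\int_{\Omega_r}|\nabla u_0|^2 \le Cr\norm{\nabla_{\tan} f}_{L^2(\partial\Omega)}^2$ for every $0<r\le \text{diam}(\Omega)$. The task therefore reduces to transferring this strip bound from $u_0$ to $u_\e$ up to an error controlled by the $L^2$ convergence rate provided by Theorem \ref{thm_H1_Lip}.

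First I would cover $\Omega_r$ by finitely overlapping balls $B(x_i, r)\cap \Omega$ with $x_i \in \partial \Omega$. On each such ball the identity $u_\e = u_0 = f$ on $B(x_i,2r) \cap \partial\Omega$ makes $u_0$ an admissible comparison function in the boundary Caccioppoli inequality for $u_\e$, giving
\begin{equation*}
\int_{B(x_i,r)\cap\Omega}|\nabla u_\e|^2 \le \frac{C}{r^2}\int_{B(x_i,2r)\cap\Omega}|u_\e - u_0|^2 + C\int_{B(x_i,2r)\cap\Omega}|\nabla u_0|^2.
\end{equation*}
Summing over the cover and inserting the classical Rellich bound for $u_0$,
\begin{equation*}
\int_{\Omega_r}|\nabla u_\e|^2 \le \frac{C}{r^2}\int_{\Omega_{Cr}}|u_\e - u_0|^2 + Cr\,\norm{\nabla_{\tan} f}_{L^2(\partial\Omega)}^2.
\end{equation*}
Subtracting a common additive constant from $u_\e$ and $u_0$ does not affect any gradients, so we may assume $\norm{f}_{H^1(\partial\Omega)} \lesssim \norm{\nabla_{\tan} f}_{L^2(\partial\Omega)}$; then (\ref{ineq_L2_Lip}) delivers $\norm{u_\e - u_0}_{L^2(\Omega)} \le C\omega_{k,\sigma}(\e)^{1/2}\norm{\nabla_{\tan} f}_{L^2(\partial\Omega)}$.

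The hard part will be the sharp balance required to reach the threshold $r\ge \omega_{k,\sigma}(\e)$: plugging the global $L^2$ rate into the Caccioppoli term above only yields the weaker scale $r\ge \omega_{k,\sigma}(\e)^{1/3}$. To close the gap I would replace the global $L^2$ bound on $\Omega_{Cr}$ by a sharper local estimate derived from the first-order approximation of Theorem \ref{thm_H1_Lip}, namely $u_\e - u_0 = \e \chi_T(\cdot/\e) K_{\e,\delta}(\nabla u_0) + w_\e$ with $T = \e^{-1}$, $\delta = T^{-1}\Theta_{k,\sigma}(T)$, and $\norm{\nabla w_\e}_{L^2(\Omega)} \le C\omega_{k,\sigma}(\e)^{1/2}\norm{\nabla_{\tan} f}_{L^2(\partial\Omega)}$. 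The corrector piece can be controlled on the strip $\Omega_{Cr}$ via Lemma \ref{lem_Oeu_g}, which manufactures precisely the missing factor of $r$; a subtlety is that Theorem \ref{thm_Rellich_DP} assumes no Dini-type hypothesis, so only the weak bound $\norm{\nabla \chi_T}_{S_1^2} \le C_\sigma T^\sigma$ is available, and the polynomial loss $T^\sigma$ has to be absorbed by $\omega_{k,\sigma}(\e)^{1/2}$ and the strip width $r$ through a careful choice of $\sigma$. This tuning is what ultimately fixes the threshold at $r\ge \omega_{k,\sigma}(\e)$.
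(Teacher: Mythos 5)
Your skeleton (compare with $u_0$, use the nontangential-maximal-function bound $\norm{(\nabla u_0)^*}_{L^2(\partial\Omega)}\le C\norm{\nabla_{\tan}f}_{L^2(\partial\Omega)}$ to get $\int_{\Omega_r}|\nabla u_0|^2\le Cr\norm{\nabla_{\tan}f}^2$, then control $u_\e-u_0$ through the first-order approximation) is the right one, and the boundary Caccioppoli reduction is a legitimate variant of the paper's direct triangle inequality. But the step you yourself flag as "the hard part" contains a genuine gap, and the tuning of $\sigma$ you propose cannot close it. The paper's key move, which you miss, is to take the cut-off scale in $K_{\e,\delta}$ to be $\delta=4r$, matched to the strip width, rather than the default $\delta=T^{-1}\Theta_{k,\sigma}(T)$. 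With $\delta=4r$ the corrector term $\e\chi_T(\cdot/\e)K_{\e,4r}(\nabla u_0)$ vanishes identically on $\Omega_{2r}$ (its support lies in $\Omega\setminus\Omega_{2r}$), so it never has to be estimated on the strip; and rerunning the proof of Theorem \ref{thm_H1_Lip} with this $\delta$ gives $\norm{\nabla w_\e}_{L^2(\Omega)}\le Cr^{1/2}\norm{f}_{H^1(\partial\Omega)}$ precisely when $r\ge\omega_{k,\sigma}(\e)$, which is exactly the order needed.

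With your choice $\delta=T^{-1}\Theta_{k,\sigma}(T)$ the corrector term does not vanish on $\Omega_{Cr}$, and the best available bound via Lemma \ref{lem_Oeu_g} is
\begin{equation*}
\int_{\Omega_{Cr}}\bigl|\e\chi_T(\cdot/\e)S_\e(\eta_\delta\nabla u_0)\bigr|^2
\le C r\,\e^2\norm{\chi_T}_{S_1^2}^2\,\norm{\eta_\delta\nabla u_0}_{H^1}\norm{\eta_\delta\nabla u_0}_{L^2}
\le C r\,\omega_{k,\sigma}(\e)^2\,\delta^{-1/2}\norm{f}_{H^1(\partial\Omega)}^2,
\end{equation*}
because $\norm{\eta_\delta\nabla u_0}_{H^1}$ necessarily carries $\norm{\nabla^2u_0}_{L^2(\Omega\setminus\Omega_\delta)}+\delta^{-1}\norm{\nabla u_0}_{L^2(\Omega_{2\delta})}\le C\delta^{-1/2}\norm{f}_{H^1(\partial\Omega)}$ in a Lipschitz domain. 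Feeding this into your Caccioppoli term $r^{-2}\int_{\Omega_{Cr}}|u_\e-u_0|^2$ requires $\omega_{k,\sigma}(\e)^2\delta^{-1/2}\le r^2$, i.e. $r\gtrsim\omega_{k,\sigma}(\e)^{3/4}$, not $r\ge\omega_{k,\sigma}(\e)$. This loss comes from $\nabla^2u_0$ near the Lipschitz boundary, not from the corrector estimates, so no choice of $\sigma$ (and note the relevant norm is $\norm{\chi_T}_{S_1^2}\le C\Theta_{k,\sigma}(T)$, not $\norm{\nabla\chi_T}_{S_1^2}\le CT^\sigma$) repairs it. A second, smaller gap: for the $w_\e$ contribution you need $\int_{\Omega_{Cr}}|w_\e|^2\le Cr^2\norm{\nabla w_\e}_{L^2(\Omega)}^2$, which requires a Hardy/boundary-layer Poincar\'e inequality for $w_\e\in H_0^1(\Omega)$; the global bound $\norm{w_\e}_{L^2(\Omega)}\le C\omega_{k,\sigma}(\e)^{1/2}\norm{f}_{H^1(\partial\Omega)}$ alone only yields $r\ge\omega_{k,\sigma}(\e)^{1/3}$. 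Both issues disappear once $\delta$ is tied to $r$ as in the paper.
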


\begin{proof}
	Recall $\Omega_t = \{x\in \Omega; \text{dist}(x,\partial\Omega) < t \}$. We fix $r>\omega_{k,\sigma}(\e)$ and let 
	\begin{equation}\label{eq_we_4r}
	w_\e = u_\e - u_0 -\e \chi_{T,k}^\beta(x/\e) K_{\e,4r} \left(\frac{\partial u_0^\beta}{\partial x_k}\right).
	\end{equation}
	Now following the same argument of Theorem \ref{thm_H1_Lip}  and choosing $\delta = 4r$ after (\ref{ineq_dKdu0}), we obtain
	\begin{equation}\label{ineq_H1_r}
	\norm{ w_\e}_{H^1(\Omega)} \le Cr^{1/2}  \norm{f}_{H^1(\partial\Omega)}.
	\end{equation}
	Note that this coincides with Theorem \ref{thm_H1_Lip} if $r = \omega_{k,\sigma}(\e)$.
	The point here is that the last term on the right-hand side of (\ref{eq_we_4r}) is supported in $\Omega\setminus\Omega_{2r}$. Thus, by (\ref{ineq_H1_r}) and (\ref{ineq_u0_delta}), we have
	\begin{align*}
	\norm{\nabla u_\e}_{L^2(\Omega_r)} &\le \norm{\nabla w}_{L^2(\Omega)} + \norm{\nabla u_0}_{L^2(\Omega_r)} \\
	&\le Cr^{1/2} \norm{f}_{H^1(\partial\Omega)} \\
	&\le C |\Omega_r|^{1/2}  \norm{f}_{H^1(\partial\Omega)},
	\end{align*}
	where we used the fact $|\Omega_r| \simeq r$ for Lipschitz domains. Finally, note that $u_\e - \int_{\partial\Omega} f$ is also a solution to the same system. Then the last estimate, together with the Poincar\'{e} inequality, gives the desired estimate.
\end{proof}

It is obvious that the proof of Theorem \ref{thm_Rellich_DP} actually has nothing to do with the boundary condition. Therefore, the similar estimate holds for Neumann problem as well.
\begin{theorem}\label{thm_Rellich_NP}
	Suppose that $A\in APW^2(\R^d)$ satisfies the ellipticity condition (\ref{def_ellipticity}) and $\Omega$ is a Lipschitz domain. Let $u_\e$ be the weak solution of Dirichlet problem
	\begin{equation}
	\cL_\e(u_\e) = 0 \quad \text{in } \Omega, \qquad \text{and} \qquad \frac{\partial u_\e}{\partial \nu_\e} =g \quad \text{on } \partial\Omega, \qquad \text{and} \qquad \int_{\Omega} u_\e = 0,
	\end{equation}
	where $F\in L^2(\Omega)$ and $g\in L^2(\partial\Omega)$. Then for any $\omega_{k,\sigma}(\e) \le r \le \text{diam}(\Omega)$,
	\begin{equation}\label{ineq_RelN}
	\left( \fint_{\Omega_r} |\nabla u_\e|^2 \right)^{1/2} \le C\norm{g}_{L^2(\Omega)},
	\end{equation}
	where $C$ is independent of $\e$ and $r$.
\end{theorem}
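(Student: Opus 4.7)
The plan is to mirror the proof of Theorem \ref{thm_Rellich_DP}, since as the author already observes the argument is essentially agnostic to the type of boundary data; only two ingredients change, and in each case the Neumann analog is available. Fix $r$ with $\omega_{k,\sigma}(\e) \le r \le \text{diam}(\Omega)$ and consider the modified first-order approximation
\begin{equation*}
w_\e = u_\e - u_0 - \e \chi_{T,k}^\beta(x/\e) K_{\e,4r}\left(\frac{\partial u_0^\beta}{\partial x_k}\right),
\end{equation*}
where $u_0$ is the solution of the corresponding homogenized Neumann problem with data $g$ and $\int_\Omega u_0 = 0$. Because the cut-off $\eta_{4r}$ is supported in $\Omega \setminus \Omega_{4r}$, the correction term vanishes on $\Omega_{2r}$; in particular, on $\Omega_r$ one has $\nabla u_\e = \nabla w_\e + \nabla u_0$.

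Next I would establish the Neumann analog of (\ref{ineq_H1_r}), namely
\begin{equation*}
\norm{w_\e}_{H^1(\Omega)} \le C r^{1/2} \norm{g}_{L^2(\partial\Omega)}.
\end{equation*}
To obtain this, repeat the computation that led to (\ref{ineq_dwdw}) verbatim, using the test function $w_\e$; the only change is that integration by parts now produces a boundary term involving $\partial u_\e/\partial\nu_\e - \partial u_0/\partial\nu_0 = 0$ in the appropriate sense on $\partial\Omega$, so no new boundary contribution survives. Using the Neumann-version of the $L^2$ convergence rate stated in (\ref{ineq_L2N_Lip}) together with $\|u_0\|_{H^1(\Omega)} \le C\|g\|_{L^2(\partial\Omega)}$ from the energy estimate, and then choosing $\delta = 4r$ (rather than $\delta = T^{-1}\Theta_{k,\sigma}(T)$) in the bounds (\ref{ineq_Kdu0_du0}) and (\ref{ineq_dKdu0}), every error term is of the form $(\e\delta^{-1/2} + \delta^{1/2} + \omega_{k,\sigma}(\e)) \|g\|_{L^2(\partial\Omega)}$. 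The hypothesis $r \ge \omega_{k,\sigma}(\e)$ is exactly what guarantees $\e\delta^{-1/2} + \omega_{k,\sigma}(\e) \le C r^{1/2}$, and the displayed bound follows.

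The third ingredient is the boundary-layer estimate $\norm{\nabla u_0}_{L^2(\Omega_r)} \le C r^{1/2} \norm{g}_{L^2(\partial\Omega)}$. This is the direct analog of (\ref{ineq_u0_delta}) for the Neumann problem: decompose $u_0 = v + h$ where $v$ has right-hand side $-\lambda u_0$ (here taken to vanish since $\lambda = 0$ and $F=0$), and $h$ solves a constant-coefficient Neumann problem on $\Omega$ with data in $L^2(\partial\Omega)$. Invoking the solvability of the $L^2$ Neumann problem in Lipschitz domains for $\cL_0$ from \cite{DKV,Gwj} gives $\|(\nabla h)^*\|_{L^2(\partial\Omega)} \le C\|g\|_{L^2(\partial\Omega)}$, and Lemma \ref{lem_Oeu_g} (or a direct Fubini argument along the Lipschitz foliation) turns this into the desired $L^2$-layer estimate. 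Combining with the previous paragraph and $|\Omega_r| \simeq r$, one obtains (\ref{ineq_RelN}).

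The main obstacle I expect is the second paragraph: carefully checking that the Neumann test-function manipulation in the proof of Theorem \ref{thm_H1_Lip} goes through cleanly with $\delta = 4r$ instead of the canonical choice $\delta = T^{-1}\Theta_{k,\sigma}(T)$. In particular, one must verify that the $\lambda$-free case does not hide any extra boundary contribution from $\partial u_0/\partial \nu_0 = g$ when one rewrites $\cL_\e w_\e$ using the dual correctors $\phi_T$, and that the resulting estimate scales linearly in $r^{1/2}$ uniformly down to $r = \omega_{k,\sigma}(\e)$, which is precisely the range that renders this proof sharp and matches the Dirichlet case in Theorem \ref{thm_Rellich_DP}.
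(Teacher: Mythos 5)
Your proposal is correct and follows exactly the route the paper intends: the paper gives no separate proof of this theorem, remarking only that the argument for Theorem \ref{thm_Rellich_DP} "has nothing to do with the boundary condition," and your three ingredients (the $K_{\e,4r}$-modified approximation whose correction term vanishes on $\Omega_{2r}$, the Neumann $H^1$ rate with $\delta=4r$ obtained from the weak formulation in which the boundary data $g$ cancels, and the boundary-layer bound for $\nabla u_0$ via $L^2$ Neumann solvability in Lipschitz domains) are precisely what that remark presupposes. No substantive difference from the paper's approach.
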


Strictly speaking, just as the uniform Lipschitz estimate, (\ref{ineq_RelD}) and (\ref{ineq_RelN}) should be called large scale uniform Rellich estimates since the left-hand side is the average integral of $\nabla u_\e$ over a relatively thick boundary layer. To recover the usual Rellich estimates, we must strengthen the conditions from two aspects: 

(1) a better rate of convergence, i.e., $\omega_{k,\sigma}(\e)= O(\e)$ as $\e \to 0$;

(2) symmetry and smoothness conditions on the coefficients, i.e., $A = A^*$ and $A$ is uniformly H\"{o}lder continuous.

With condition (2) above, we are able to bound $\norm{\nabla u_\e}_{L^2(\partial\Omega)}$ by the average integral of $\nabla u_\e$ over the boundary layer $\Omega_{c\e}$. Indeed, it follows from \cite[Theorem 6.3]{KS} and \cite[Remark 3.1]{Shen1} that if $A$ is symmetric and uniformly H\"{o}lder continuous, then
\begin{equation}\label{ineq_RelDs}
\int_{\partial\Omega} |\nabla u_\e|^2 \le C\int_{\partial\Omega} |\nabla_{\tan} u_\e|^2 + \frac{C}{\e} \int_{\Omega_{c\e}} |\nabla u_\e|^2,
\end{equation}
and
\begin{equation}\label{ineq_RelNs}
\int_{\partial\Omega} |\nabla u_\e|^2 \le C\int_{\partial\Omega} \Abs{\frac{\partial u_\e}{\partial \nu_\e}}^2 + \frac{C}{\e} \int_{\Omega_{c\e}} |\nabla u_\e|^2,
\end{equation}
where $C$ and $c$ are independent of $\e$. Now using condition (1) and setting $r = \omega_{k,\sigma}(\e) = C\e $ and combining (\ref{ineq_RelD}), (\ref{ineq_RelN}), (\ref{ineq_RelDs}) and (\ref{ineq_RelNs}), we obtain the usual well-known Rellich estimates:
\begin{equation}
\norm{\nabla u_\e}_{L^2(\partial\Omega)} \le C\norm{\nabla_{\tan} f}_{L^2(\partial\Omega)}, \qquad \norm{\nabla u_\e}_{L^2(\partial\Omega)} \le C\norm{\frac{\partial u_\e}{\partial \nu_\e}}_{L^2(\partial\Omega)}.
\end{equation}

\begin{remark}
	We should mention that the large scale Rellich estimate in $L^p$ can also be established by using the uniform $W^{1,p}$ estimates and convergence rate in $W^{1,p}$ , as shown in \cite{Shen1} (Some conditions of smoothness on $A$ and $\Omega$ are required). However, we will not expand in detail.
\end{remark}

\subsection{Large scale boundary H\"{o}lder estimate} As an easier application of our previous argument for Lipschitz estimate, we will show the uniform H\"{o}lder estimate near the boundary. Let $D_r, \Delta_r$ be defined as before. Let $u_\e \in H^1(D_2;\R^d)$ be a weak solution of $\cL_\e(u_\e) + \lambda u_\e = F$ in $D_2$ with $u_\e = f$ on $\Delta_2$. Here we assume that $F\in L^p(D_2)$ with $p\ge 2$ and $p>d/2$ and $f$ is Lipschitz continuous on $\Delta_2$. Consider the following auxiliary quantity
\begin{equation}\label{def_Phibeta}
\begin{aligned}
\Phi_\gamma (t;u) &= \frac{1}{t^\gamma} \inf_{q\in \R^d} \Bigg\{ \left( \fint_{D_t} |u_\e - q|^2 \right)^{1/2} + t^2 \left( \fint_{D_t} |F|^p \right)^{1/p} \\
&\qquad + t^2\lambda |q| + \norm{f -q}_{L^\infty(\Delta_{t})} + t \norm{\nabla_{\tan} f}_{L^\infty(\Delta_t)}\Bigg\},
\end{aligned}
\end{equation}
where $\gamma < \beta = \min\{2 - d/p,1\}$.

\begin{lemma}\label{lem_uev_holder}
	Let $\e \le r\le 1$. There exists $v\in H^1(D_r;\R^d)$ such that $\cL_0 (v)  + \lambda v = F$ in $D_r$, $v =f$ on $\Delta_r$ and
	\begin{equation}\label{ineq_ueflat_holder}
	\frac{1}{r^\gamma} \left( \fint_{D_r} |u_\e - v|^2\right)^{1/2} \le C[\omega_{k,\sigma}(\e/r)]^{1/2} \Phi_\gamma(2r;u_\e),
	\end{equation}
	where $C$ depends only on $A,\sigma$ and $M$.
\end{lemma}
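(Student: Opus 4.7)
The proof mirrors that of Lemma \ref{lem_ue_v}. First I would reduce the statement to the case $r = 1$ by a scaling argument. Setting $v_\e(y) = u_\e(ry)$, one verifies that $v_\e$ is a weak solution on the rescaled domain $\tilde D_1 = r^{-1} D_r$ of an operator of the same form, with coefficient oscillation at scale $\e/r$, rescaled right-hand side $\tilde F(y) = r^2 F(ry)$, rescaled parameter $\tilde\lambda = r^2 \lambda$, and Dirichlet datum $\tilde f(y) = f(ry)$ on $\tilde\Delta_1$. A direct computation of each of the five terms inside the infimum defining $\Phi_\gamma$ yields
\begin{equation*}
\Phi_\gamma(2; v_\e) = r^\gamma \, \Phi_\gamma(2r; u_\e),
\end{equation*}
so the factor $r^{-\gamma}$ on the left of (\ref{ineq_ueflat_holder}) is absorbed by this scaling identity. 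Hence it suffices to establish the case $r = 1$.

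For $r = 1$, I would apply Caccioppoli's inequality on $D_{3/2}$ to bound $\int_{D_{3/2}} |\nabla u_\e|^2$ by the quantities appearing inside $\Phi_\gamma(2; u_\e)$, and then use the coarea formula to select a slice $t \in [5/4, 3/2]$ along which the trace of $u_\e$ has a controlled $H^1$-norm on $\partial D_t \setminus \Delta_2$. Define $v$ as the weak solution to $\cL_0(v) + \lambda v = F$ in $D_t$ with $v = u_\e$ on $\partial D_t$; since $t \ge 1$ one has $v = f$ on $\Delta_1$. Applying the Lipschitz-domain $L^2$ convergence rate (\ref{ineq_L2_Lip}) to the pair $(u_\e, v)$ on $D_t$ and using $\lambda \in [0,1]$ gives
\begin{equation*}
\norm{u_\e - v}_{L^2(D_1)} \le C\,\omega_{k,\sigma}(\e)^{1/2} \bigl\{ \norm{F}_{L^2(D_2)} + \norm{u_\e}_{H^1(\partial D_t)} \bigr\}.
\end{equation*}
The right-hand side is then bounded by the quantities inside $\Phi_\gamma(2; u_\e)$, with $\norm{F}_{L^2}$ upgraded to $\norm{F}_{L^p}$ via H\"older's inequality (permissible since $p \ge 2$).

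Finally, I would incorporate the constant $q \in \R^d$ by observing that $u_\e - q$ and $v - q$ solve the same respective systems on $D_t$ but with right-hand side $F - \lambda q$ and Dirichlet datum $f - q$ on $\Delta_t \supset \Delta_1$. The same rate estimate applied to this shifted pair introduces precisely the $\lambda|q|$ term present in $\Phi_\gamma$, via $\norm{F - \lambda q}_{L^2(D_2)} \le \norm{F}_{L^2(D_2)} + C\lambda|q|$; taking the infimum over $q \in \R^d$ then yields (\ref{ineq_ueflat_holder}) at $r = 1$. I do not anticipate any substantive obstacle: this is essentially a routine adaptation of Lemma \ref{lem_ue_v}, the only additional care being the consistent tracking of the exponent $\gamma$ through the rescaling identity, which has been encoded into the definition of $\Phi_\gamma$.
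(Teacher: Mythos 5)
Your proposal is correct and follows essentially the same route as the paper, whose proof of this lemma is simply the observation that the argument of Lemma \ref{lem_ue_v} (rescaling, Caccioppoli plus coarea slice selection, comparison with the homogenized solution on $D_t$ via (\ref{ineq_L2_Lip}), then subtracting a constant $q$) carries over verbatim once $\gamma$ is tracked through the scaling identity. Your explicit verification of $\Phi_\gamma(2;v_\e)=r^\gamma\,\Phi_\gamma(2r;u_\e)$ and of the $\lambda|q|$ term is exactly the bookkeeping the paper leaves implicit.
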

\begin{proof}
	The proof is exactly the same as Lemma \ref{lem_ue_v}.
\end{proof}

\begin{lemma}\label{lem_Htheta_holder}
	Let $v \in H^1(D_2;\R^d)$ be a weak solution of $\cL_0(v) + \lambda v= F$ in $D_2$ with $ v = f$ on $\Delta_2$. Then there exists $\theta\in (0,1/4)$, depending only on $p,A,\tau,\alpha$ and $M$, such that
	\begin{equation}
	\Phi_\gamma(\theta r; v) \le \frac{1}{2} \Phi_\gamma(r; v).
	\end{equation}
\end{lemma}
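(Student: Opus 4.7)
The plan is to imitate the proof of Lemma \ref{lem_Htheta}, but with the boundary $C^{1,\alpha}$ estimate for $\cL_0$ replaced by a boundary $C^{0,\gamma'}$ estimate for some $\gamma' \in (\gamma, \beta)$, where $\beta := \min\{2 - d/p,\, 1\}$. After rescaling to $r = 1$, I would fix a near-infimizer $q \in \R^d$ of $\Phi_\gamma(1;v)$ and set $w := v - q$. Then $w$ satisfies $\cL_0 w + \lambda w = F - \lambda q$ in $D_1$ with Lipschitz Dirichlet data $w = f - q$ on $\Delta_1$.

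Since $\cL_0 + \lambda$ is a constant-coefficient elliptic system (the lower-order term $\lambda u$ being harmless because $\lambda \in [0,1]$), the classical boundary H\"older estimate in the $C^{1,\alpha}$ graph domain $D_1$ yields
\begin{equation*}
\norm{w}_{C^{0,\gamma'}(\overline{D_{1/2}})} \le C \left\{ \left( \fint_{D_1} |w|^2\right)^{1/2} + \left(\fint_{D_1}|F|^p\right)^{1/p} + \lambda|q| + \norm{f - q}_{L^\infty(\Delta_1)} + \norm{\nabla_{\tan} f}_{L^\infty(\Delta_1)} \right\} \le C \Phi_\gamma(1;v),
\end{equation*}
where I absorb the constant $\lambda q$ into the load via the triangle inequality. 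To bound $\Phi_\gamma(\theta;v)$, I would use the test constant $q_\theta := v(0) = w(0) + q$. Both ``$L^\infty$-distance from $q_\theta$'' terms, namely $(\fint_{D_\theta}|v - q_\theta|^2)^{1/2}$ and $\norm{f - q_\theta}_{L^\infty(\Delta_\theta)}$ (using $v = f$ on $\Delta_1$), are dominated by $C\theta^{\gamma'}\norm{w}_{C^{0,\gamma'}}$. The source contributes $\theta^2 (\fint_{D_\theta}|F|^p)^{1/p} \le C\theta^{2 - d/p}(\fint_{D_1}|F|^p)^{1/p}$; the lower-order term, using $|q_\theta| \le |q| + \norm{w}_{L^\infty}$, yields $\theta^2 \lambda |q_\theta| \le C\theta^2 \Phi_\gamma(1;v)$; and the tangential-derivative term contributes $\theta \norm{\nabla_{\tan} f}_{L^\infty(\Delta_1)} \le \theta \Phi_\gamma(1;v)$. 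Collecting and dividing by $\theta^\gamma$, and noting that $\gamma' \le \beta \le 1$ makes $\theta^{\gamma'}$ the dominant power, I obtain $\Phi_\gamma(\theta;v) \le C\theta^{\gamma' - \gamma}\Phi_\gamma(1;v)$. Then I choose $\theta \in (0, 1/4)$ small enough that $C\theta^{\gamma' - \gamma} \le 1/2$.

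The main delicate point I foresee is citing or proving a clean boundary $C^{0,\gamma'}$ estimate valid for $L^p$ sources with $p>d/2$ and Lipschitz data in a $C^{1,\alpha}$ (not $C^{1,1}$) graph domain; this should, however, follow from standard Campanato-Morrey estimates applied to the constant-coefficient system, and the presence of $\lambda u$ is harmless because $\lambda \in [0,1]$ is bounded. Beyond that, the argument is a strict specialization of Lemma \ref{lem_Htheta}: we do not need the affine correction $Px$, only the constant $q$, so no $C^{1,\alpha}$ regularity at the boundary is required.
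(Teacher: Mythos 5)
Your proposal is correct and follows essentially the same route as the paper: rescale to $r=1$, subtract a constant $q$ so that $w=v-q$ solves $\cL_0 w+\lambda w=F-\lambda q$ with data $f-q$, invoke the boundary $C^{0,\gamma'}$ estimate for the constant-coefficient system with an intermediate exponent $\gamma<\gamma'<\beta$ (the paper calls it $\beta_0$), test $\Phi_\gamma(\theta;\cdot)$ with the constant $v(0)$, and absorb the $\lambda|q|$ term before choosing $\theta$ small. The only cosmetic difference is that you fix a near-infimizer of $\Phi_\gamma(1;v)$ up front, whereas the paper applies the estimate to $v-q$ for arbitrary $q$ and takes the infimum at the end; the content is identical.
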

\begin{proof}
	The lemma follows from the boundary $C^\alpha$ estimate for the second-order elliptic system with constant coefficients. By rescaling, we can assume that $r = 1$. Let $\gamma < \beta_0 < \beta$ and $q = v(0)$. It is easy to see
	\begin{equation}
	\Phi_\gamma(\theta;v) \le C\theta^{\beta_0 - \gamma} \norm{v}_{C^{\beta_0}(D_\theta)} + C\theta^{2-\gamma - d/p} \left( \fint_{D_1} |F|^p \right)^{1/p}.
	\end{equation}
	Note that $\beta_0 < \beta \le 2-d/p$. Using boundary $C^{\beta_0}$ estimate for $v$, we obtain
	\begin{equation}\label{ineq_vCbeta}
	\norm{v}_{C^{\beta_0}(D_1)} \le C  \Bigg\{ \left( \fint_{D_1} |v|^2 \right)^{1/2} + \left( \fint_{D_1} |F|^p \right)^{1/p} 
	+ \norm{f }_{C^{0,1}(\Delta_1)} \Bigg\}.
	\end{equation}
	Hence,
	\begin{equation}
	\Phi_\gamma(\theta;v) \le C\theta^{\beta_0 - \gamma} \Bigg\{ \left( \fint_{D_1} |v|^2 \right)^{1/2} + \left( \fint_{D_1} |F|^p \right)^{1/p} 
	+ \norm{f }_{C^{0,1}(\Delta_1)} \Bigg\}.
	\end{equation}
	Now let $w = v - q$ for some $q\in \R^d$. Then
	\begin{equation}
	\cL_0(w) + \lambda w = F - \lambda q.
	\end{equation}
	Applying (\ref{ineq_vCbeta}) to $w$ with Dirichlet boundary data $w = f -q$, we arrive at
	\begin{align}
	\begin{aligned}\label{ineq_Phi_w}
	\Phi_\gamma(\theta;w)  &\le C \theta^{\beta_0 - \gamma} \Bigg\{ \left( \fint_{D_1} |v-q|^2 \right)^{1/2}  \\
	&\qquad \qquad \qquad + \left( \fint_{D_1} |F|^p \right)^{1/p} + \lambda |q|
	+ \norm{f-q }_{C^{0,1}(\Delta_1)} \Bigg\}.
	\end{aligned}
	\end{align}
	Observe that
	\begin{equation}\label{ineq_Phi_wv}
	\Phi_\gamma(\theta;v) \le \Phi_\gamma(\theta;w) + \lambda \theta^{2-\gamma} |q|.
	\end{equation}
	Combining (\ref{ineq_Phi_w}) and (\ref{ineq_Phi_wv}) and taking the infimum over all $q\in \R^d$, we obtain
	\begin{equation}
	\Phi_\gamma(\theta;v) \le C\theta^{\beta_0 -\gamma } \Phi_\gamma(1;v).
	\end{equation}
	The desired estimate follows by choosing $\theta \in (0,1/4)$ so small that $C \theta^{\beta_0 -\gamma } \le 1/2$. 
	
\end{proof}

\begin{proof}[Proof of Theorem \ref{thm_holder_DP}]
	By using Lemma \ref{lem_uev_holder}, \ref{lem_Htheta_holder} and the same argument of Lemma \ref{lem_HPhi}, we have
	\begin{equation*}
	\Phi_\gamma(\theta r; u_\e) \le \frac{1}{2} \Phi_\gamma(2r;u_\e) + C [\omega_{k,\sigma}(\e/r)]^{1/2} \Phi_\gamma(2r;u_\e).
	\end{equation*}
	Since $\omega_{k,\sigma}(r) \to 0$ as $r\to 0$, we can choose a particular $N$ sufficiently large such that for any $K\ge N$, we have $C [\omega_{k,\sigma}(K^{-1})]^{1/2} < 1/2$. In other words, for all $N\e \le r< 1/2$, $\Phi_\gamma(\theta r; u_\e) \le \Phi_\gamma(2r; u_\e).$ It follows by iteration that $\Phi_\gamma(r; u_\e) \le \Phi_\gamma(1; u_\e)$ for all $N\e \le r<1/2$.
	Finally the case $\e < r\le N\e$ follows trivially from $\Phi_\gamma(r; u_\e) \le C\Phi_\gamma(N\e; u_\e)$. As a result,
	\begin{equation}
	\Phi_\gamma(r; u_\e) \le C\Phi_\gamma(1; u_\e),
	\end{equation}
	for all $\e < r < 1/2$.
	
	Now by Caccioppoli's inequality,
	\begin{equation*}
	\begin{aligned}
	\left( \fint_{D_{r/2}} |\nabla u_\e|^2 \right)^{1/2} &\le \frac{C}{r} \inf_{q\in \R^d} \Bigg\{ \left( \fint_{D_r} |u_\e - q|^2 \right)^{1/2} + r^2 \left( \fint_{D_r} |F|^p \right)^{1/p} \\
	&\qquad + \norm{f -q}_{L^\infty(\Delta_{r})} + r \norm{\nabla_{\tan} f}_{L^\infty(\Delta_r)}\Bigg\} \\
	& \le C r^{\gamma-1} \Phi_\gamma(r;u_\e) \\
	& \le C r^{\gamma-1} \Phi_\gamma(1;u_\e) \\
	& \le  C r^{\gamma-1} \Bigg\{ \left( \fint_{D_1} |\nabla u_\e |^2 \right)^{1/2} + \left( \fint_{D_1} |F|^p \right)^{1/p} + \norm{f}_{C^{0,1}(\Delta_{1})} \Bigg\}, \\
	\end{aligned}
	\end{equation*}
	which ends the proof.
\end{proof}

The analogous result holds for Neumann problems. By setting
\begin{equation}\label{def_Psibeta}
\begin{aligned}
\Psi_\gamma (t;u) &= \frac{1}{t^\gamma} \inf_{q\in \R^d} \Bigg\{ \left( \fint_{D_t} |u_\e - q|^2 \right)^{1/2} + t^2 \left( \fint_{D_t} |F|^p \right)^{1/p} \\
&\qquad + t^2\lambda |q| + t \norm{g}_{L^\infty(\Delta_t)}\Bigg\},
\end{aligned}
\end{equation}
and applying the similar argument as Theorem \ref{thm_holder_DP}, we have the following.

\begin{theorem}[Boundary H\"{o}lder estimate for NP]\label{thm_holder_NP}
	Suppose that $A\in APW^2(\R^d)$ satisfies the ellipticity condition (\ref{def_ellipticity}). Let $u_\e \in H^1(D_2;\R^d)$ be a weak solution of $\cL_\e(u_\e) + \lambda u_\e= F$ in $D_2$ with $\partial u_\e/\partial \nu_\e = g$ on $\Delta_2$, where $\lambda \in [0,1]$. Then, for any $\e \le r\le 1$,
	\begin{equation}\label{ineq_Holder_NP}
	\left( \fint_{D_r} |\nabla u_\e|^2 \right)^{1/2} \le  C r^{\gamma-1} \Bigg\{ \left( \fint_{D_1} |\nabla u_\e |^2 \right)^{1/2} + \left( \fint_{D_1} |F|^p \right)^{1/p}
	+ \norm{g}_{L^\infty(\Delta_{1})} \Bigg\},
	\end{equation}
	where $\gamma < 2-d/p, p\ge 2, p>d/2$. In particular, if $p = d$, then (\ref{ineq_Holder_NP}) holds for all $\gamma \in (0,1)$.
\end{theorem}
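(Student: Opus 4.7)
My plan is to follow the scheme of Theorem \ref{thm_holder_DP} line by line, replacing the Dirichlet quantity $\Phi_\gamma$ by $\Psi_\gamma$ defined in (\ref{def_Psibeta}) and using the Neumann analogues of the approximation and flatness results. Indeed, since the point of the H\"older case (as opposed to the Lipschitz case) is that one does not need to subtract affine functions from $u_\e$, only constants, and constants have vanishing conormal derivative, the Neumann argument is in fact slightly cleaner than in the Dirichlet case.

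First I would establish the Neumann analogue of Lemma \ref{lem_uev_holder}: for $\e\le r\le 1$ there is $w\in H^1(D_r;\R^d)$ with $\cL_0 w+\lambda w=F$ in $D_r$, $\partial w/\partial\nu_0=g$ on $\Delta_r$, and
\begin{equation*}
\frac{1}{r^\gamma}\left(\fint_{D_r}|u_\e-w|^2\right)^{1/2}\le C[\omega_{k,\sigma}(\e/r)]^{1/2}\Psi_\gamma(2r;u_\e).
\end{equation*}
This is obtained by rescaling to $r=1$, using Caccioppoli and a co-area selection of a good level $t\in[5/4,3/2]$ as in Lemma \ref{lem_ue_v}, and then applying the Neumann $L^2$ rate (\ref{ineq_L2N_Lip}) on the mixed problem with Neumann data $g$ on $\Delta_t$ and Dirichlet data $u_\e$ on $\partial D_t\setminus\Delta_t$ (the Lipschitz domain version of the rate tolerates this mixed data).

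Next I would establish the $\cL_0$-flatness lemma: if $w$ solves $\cL_0w+\lambda w=F$ in $D_2$ with $\partial w/\partial\nu_0=g$ on $\Delta_2$ then $\Psi_\gamma(\theta r;w)\le\tfrac12\Psi_\gamma(r;w)$ for some small $\theta$. By rescaling take $r=1$. Pick any $\beta_0\in(\gamma,\beta)$ with $\beta=\min\{2-d/p,1\}$; choosing $q=w(0)$ and using the boundary $C^{\beta_0}$-estimate for Neumann problems of the constant-coefficient system $\cL_0+\lambda$ gives
\begin{equation*}
\Psi_\gamma(\theta;w)\le C\theta^{\beta_0-\gamma}\Bigl\{\bigl(\fint_{D_1}|w|^2\bigr)^{1/2}+\bigl(\fint_{D_1}|F|^p\bigr)^{1/p}+\|g\|_{L^\infty(\Delta_1)}\Bigr\}.
\end{equation*}
Applying this to $\tilde w=w-q$, which satisfies $\cL_0\tilde w+\lambda\tilde w=F-\lambda q$ with the \emph{same} Neumann data $g$ (since $\partial q/\partial\nu_0=0$), and taking the infimum over $q\in\R^d$ yields $\Psi_\gamma(\theta;w)\le C\theta^{\beta_0-\gamma}\Psi_\gamma(1;w)$. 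Fixing $\theta$ so small that $C\theta^{\beta_0-\gamma}\le 1/2$ proves the claim; then combining with the approximation lemma gives the flatness inequality
\begin{equation*}
\Psi_\gamma(\theta r;u_\e)\le\tfrac12\Psi_\gamma(r;u_\e)+C[\omega_{k,\sigma}(\e/r)]^{1/2}\Psi_\gamma(2r;u_\e),\qquad \e\le r\le 1/2.
\end{equation*}

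Finally, since $\omega_{k,\sigma}(\e/r)\to 0$ as $\e/r\to 0$ (no Dini condition needed here, only the qualitative smallness), there exists $N$ so that $C[\omega_{k,\sigma}(N^{-1})]^{1/2}<1/2$ and hence for every $r\in[N\e,1/2]$ we have $\Psi_\gamma(\theta r;u_\e)\le\Psi_\gamma(2r;u_\e)$; iterating on geometric scales yields $\Psi_\gamma(r;u_\e)\le C\Psi_\gamma(1;u_\e)$ for all $\e<r\le 1/2$, with the range $\e<r<N\e$ handled trivially by scale comparison. To pass from $\Psi_\gamma$ to the gradient, I invoke the Neumann Caccioppoli inequality applied to $u_\e-q$ (legal since $\partial(u_\e-q)/\partial\nu_\e=g$), whose boundary term $\int_{\Delta_r}g\cdot(u_\e-q)\eta^2$ is absorbed via Young's inequality, giving
\begin{equation*}
\bigl(\fint_{D_{r/2}}|\nabla u_\e|^2\bigr)^{1/2}\le Cr^{\gamma-1}\Psi_\gamma(r;u_\e)\le Cr^{\gamma-1}\Psi_\gamma(1;u_\e),
\end{equation*}
and $\Psi_\gamma(1;u_\e)$ is in turn bounded by the right side of (\ref{ineq_Holder_NP}) via Poincar\'e. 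The main obstacle, such as it is, lies in writing down carefully the Neumann Caccioppoli so that the $\|g\|_{L^\infty(\Delta_r)}$ term appears with the correct $r$-weight to fit the $\Psi_\gamma$-scaling; the rest is a rote translation of the Dirichlet proof.
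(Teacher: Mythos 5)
Your proposal is correct and follows essentially the same route as the paper, which proves Theorem \ref{thm_holder_NP} precisely by replacing $\Phi_\gamma$ with $\Psi_\gamma$ and repeating the argument of Theorem \ref{thm_holder_DP}, using the Neumann rate (\ref{ineq_L2N_Lip}) for the approximation step and the constant-coefficient boundary H\"older estimate for the flatness step. The details you supply (the mixed boundary data in the approximation lemma, the vanishing conormal derivative of constants, and the Neumann Caccioppoli inequality) are exactly the ones the paper leaves implicit.
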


\bibliographystyle{amsplain}
\bibliography{mybib}

\end{document}